\DeclareMathOperator{\sdim}{s-dim}
\newcommand{\sphere[1]}{\circ_{<}^{#1}}
\newcommand{\subgraph}{\subseteq}
\newenvironment{pf}[1]
    {
    \vspace{0.5em}
    \noindent
    \emph{Proof of #1.}}
    {
    \qed
    \break
    }
\newcommand{\suchthat}{\;\ifnum\currentgrouptype=16 \middle\fi|\;}
\theoremstyle{plain}
\newtheorem{thm}{Theorem}[section] 
\newtheorem{lem}[thm]{Lemma}
\newtheorem{prop}[thm]{Proposition}
\newtheorem{cor}[thm]{Corollary}
\theoremstyle{definition}
\theoremstyle{remark}
\title{Graphs That Are Minor Minimal with Respect to Dimension}
\author{Thomas Giardina and Joel Foisy}
\thanks{Keywords: embedded graph, graph dimension. \\
\indent 2010 \textit{Mathematics Subject Classification.} 05C10, 05C83.}
\renewcommand\footnotemark{}
\begin{document}

\maketitle


\begin{abstract}
    Erd\H{o}s, Harary, and Tutte defined the dimension of a graph $G$ as the smallest natural number $n$ such that $G$ can be embedded in $\mathbb{R}^n$ with each edge a straight line segment of length 1. 
    Since the proposal of this definition, little has been published on how to compute the exact dimension of graphs and almost nothing has been published on graphs that are minor minimal with respect to dimension. This paper develops both of these areas. In particular, it (1) establishes certain conditions under which computing the dimension of graph sums is easy and (2) constructs three infinitely-large classes of graphs that are minor minimal with respect to their dimension.
\end{abstract}


\section{Introduction}

This paper plays a role in the completion of two goals, both having to do with the classification of graphs. Letting ``dimension" mean dimension as defined in [2], the first goal is to classify all graphs of any given dimension by constructing a complete list of minimal forbidden minors. The three classes of minor minimal graphs presented in this paper provide a partial list of minimal forbidden minors for each dimension. 

The second goal is to find an intuitive geometric notion of dimension that is equivalent to the Colin de Verdi\`ere graph invariant, $\mu(G)$ [1]. Unfortunately, Erd\H{o}s et al's dimension is not equivalent to $\mu$. However, we hope that the examples and techniques developed in this paper, combined with previously published results that tie $\mu(G)$ to $G$'s geometric properties (see \Cref{colin}), will help lead to a definition of dimension that is equivalent to the Colin de Verdi\`ere invariant.
\begin{table}[H]
\centering
\begin{tabular}{ c | c  }
 \textbf{$\mu(G)$} & Geometric Property \\ \hline
$ \leq 0$  & $G$ has no edges [3] \\ \hline
$ \leq 1$  & $G$ is the disjoint union of paths [4] \\ \hline
$\leq 2$  & $G$ is outer-planar [3] \\ \hline
$\leq 3$  & $G$ is planar [3] \\ \hline
$\leq 4$  & $G$ is linklessly embeddable [6]  \\ \hline
 \end{tabular}
 \caption{Known relations between $\mu(G)$ and $G$'s geometric properties.}
 \label{colin}
\end{table}

More specifically, Erd\H{o}s et al defined the \emph{dimension} of a graph, denoted $\dim G$, to be the smallest natural number $n$ such that $G$ can be embedded in $\mathbb{R}^n$ with edges length 1 and vertices mapped to distinct points (edges are allowed to cross one another but not allowed to cross vertices) [2]. Here, we show how to calculate the dimension of the sum of graphs and provide three classes of graphs that are minor minimal with respect to dimension (or in other words, that are minimal forbidden minors for dimension $dim(G)-1$). 

Integral to both of these efforts is a new notion of dimension, \emph{spherical dimension}. The spherical dimension of $G$, denoted $\sdim G$, is defined similarly to the dimension of $G$, save that the vertices must lie on an n-dimensional sphere (where two points is a 1-dimensional sphere, a circle is a 2-dimensional sphere, and so on). By definition, $\dim G \leq \sdim G$. We will see later on that every graph has both a dimension and a spherical dimension (\Cref{exist}).

Throughout, we will use $\sphere[n]$ to mean ``n-dimensional sphere of radius less than 1.''

The main result with respect to the sum of graphs is:
\begin{description}
	\item[\Cref{connector}] Let $|G|, |H| > 1$. If $G$ and $H$ can be embedded on a $\sphere[\sdim G]$ and $\sphere[\sdim H]$ such that the squares of the radii sum to 1, 
\[
\dim(G+H) = \sdim G+\sdim H.
\]
Otherwise, 
\[
\dim(G+H) > \sdim G + \sdim H.
\]
\end{description}
Given a graph $\sum_{i=0}^n G_i$, this theorem often allow us to easily compute $\dim \sum_{i=0}^n G_i$ so long as we have a strong understanding of each $G_i$. This allows us to compute the dimension of large, high-dimensional graphs by through investigating relatively low-dimensional graphs. 

Indeed, this technique allow us to construct the following three classes of minor minimal graphs. Let $\epsilon_n$ be the graph with $n$ vertices and no edges and let 
\begin{center}
$S_1 = \epsilon_1$ \\
$S_2 = \epsilon_2$ \\
$S_3 = \epsilon_3$ \\
$S_4 = K_1 + \epsilon_3$ \\
... \\
$S_n = K_{n-3} + \epsilon_3$.
\end{center}
Then,
\begin{description}
	\item[\Cref{MainK}] $K_n$ is minor minimal with respect to dimension $n-1$.
	\item[\Cref{Sne3}] $S_n + \epsilon_3$ is minor minimal with respect to dimension $n+1$.
	\item[\Cref{final theorem}] For $1 < n < 5$, $S_n + C_6$ is minor minimal with respect to dimension $n+2$, and for $n \geq 5$, $S_n + C_5$ is minor minimal with respect to dimension $n+2$.
\end{description}

Additionally, we find that 
\begin{description}
    \item[\Cref{sBase}, \Cref{Sne3}] $S_n$ is minor minimal with respect to spherical dimension $n-1$.
\end{description}

In the final section, we consider a definition of dimension where edges are not allowed to cross. Most of the proofs in the previous sections still hold using this new definition, and using it will allow us to expand the $S_n + C_6$ class of minor minimal graphs as follows: Let
\[
\mathcal{P}_n = \left \{ P \suchthat \begin{aligned} &\text{$P$ has $n$ edges, and each vertex of $P$} \\
&\text{ is incident to either one or two edges} \end{aligned} \right \}
\]
and
\[
\mathcal{F}_n^G = \{ G + P \suchthat P \in \mathcal{P}_n\}.
\]
Then,
\begin{description}
\item[\Cref{flowers}] The following graphs are minor minimal:
\begin{itemize}
	\item $\mathcal{F}_6^{S_1} - \{ K_1+C_6 \}$ with respect to dimension 3,
	\item $\mathcal{F}_6^{S_n}$ ($1<n<5$) with respect to dimension $n+2$,
	\item $\mathcal{F}_5^{S_n}$ ($n \geq 5$) with respect to dimension $n+2$.
\end{itemize}
\end{description}


\section{Definitions}

We have already seen the two most important definitions, those of dimension and spherical dimension. When considering these definitions, keep in mind that we consider a 1-dimensional sphere to be 2 points, a 2-dimensional sphere to be a circle, and so on. We go against the practice of a 1-sphere being a circle, a 2-sphere being a sphere, and so on, because we wish to emphasize the dimension of the space the object sits in rather than the dimension of the object.

We also saw the phrase ``the sum of graphs'' in the introduction. The \emph{sum of two graphs} $G$ and $H$, denoted $G+H$, is the graph obtained by taking disjoint copies of $G$ and $H$ and adding all possible edges between $G$ and $H$. The sum of more than two graph, $G_1$, $G_2$, ..., $G_n$, is defined iteratively: $(((G_1 + G_2) + G_3)+...+G_n)$.

A minor of a graph is another important notion. A \emph{minor} $H$ of a graph $G$ is a graph that can be obtained by iteratively applying the following three operations to $G$:
\begin{description}
    \item[\hspace{1cm} Vertex Removal] Remove a vertex from $G$.
    \item[\hspace{1cm} Edge Removal] Remove an edge from $G$.
    \item[\hspace{1cm} Edge Contraction] Merge two adjacent vertices so that the new vertex is adjacent to every vertex to which either one of the initial two vertices were adjacent.
\end{description}
A graph $G$ is \emph{minor minimal} with respect to some property if $G$ has the property but none of its proper minors do. In this paper, we are interested in graphs that are minor minimal with respect to some dimension -- i.e., $G$ has some dimension $n$, but all of its proper minors have dimension strictly less than $n$.

For the sake of space, we will also use the following notation:
\begin{description}
    \item[$|G|$] The number of vertices of $G$.
    \item[$G \cupdot H$] The disjoint union of $G$ and $H$.
    \item[$\epsilon_n$] The empty graph on $n$ vertices.
    \item[$C_n$] The cycle graph on $n$ vertices.
    \item[$K_n$] The complete graph on $n$ vertices.
    \item[$H \subgraph G$] $H$ is a (not necessarily proper) subgraph of $G$.
    \item[\texttt{$\sphere[n]$}] An n-dimensional sphere of radius less than 1.
\end{description}
We will sometimes use $\sphere[]$ when we want to discuss a hyper sphere of radius less than 1 and undeclared dimension.

\section{Computing the Dimension of Graphs: The General Wheel Graph}

\begin{figure}[H]
\centering
\includegraphics[width=3cm]{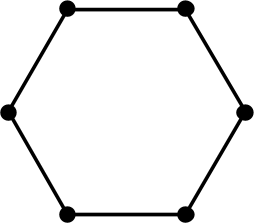} \hspace{0.5cm}
\includegraphics[width=3cm]{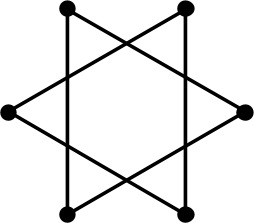}
\hspace{0.5cm}
\includegraphics[width=3cm]{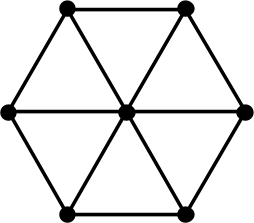}
\caption{A convex hexagon, star hexagon, and $W_6$, respectively. Notice that the convex hexagon is an embedding of $C_6$ but the star hexagon is not.}
\label{hexagons}
\end{figure}

For $n \geq 3$, let $C_n$ denote the cycle graph of length $n$. It is easy to see that $\dim C_n = 2$. A slightly more complicated example is the wheel graph. Define the \emph{wheel graph of length n}, denoted $W_n$, by $W_n = C_n +vertex$. Since $C_n$ is a subgraph of $W_n$,
\[
\dim W_n \geq \dim C_n = 2.
\]
But can we embed $W_n$ in $\mathbb{R}^2$? Suppose we could. Since all the vertices of $C_n$ must be a distance 1 away from $W_n$'s central vertex, $C_n$ must lie on a unit circle. It follows that $C_n$ must be embedded as a regular polygon (either convex or star). However, not all regular n-gons are embeddings of $C_n$. For example the star hexagon in \Cref{hexagons} is not. We call such n-gons \emph{degenerate}. The circum-radius of regular, non-degenerate n-gons is given by
\[
r = \frac{\sin \left ( \frac{n-2m}{n}\pi \right )}{\sin \left (\frac{2m}{n}\pi \right )},
\]
where $m-1$ vertices are skipped when constructing the n-gon[2]. For example, in \Cref{hexagons}, the convex hexagon has $n=6$ and $m=1$ and the star hexagon has $n=6$ and $m=2$ (although, this star hexagon is also degenerate, so the equation does not apply). From this equation we get the following radii:
\begin{table}[H]
\centering
\begin{tabular}{ c | c | c |c }
 \textbf{Radii of Non-Degenerate n-gon} & $m > \frac{1}{6}n$ & $m = \frac{1}{6}n$ & $m < \frac{1}{6}n$ \\ \hline
radius  & $<1$ & 1 & $>1$ 
 \end{tabular}
\caption{The radii of non-degenerate n-gons given $n$ and $m$, the number of vertices skipped by each edge.}
 \label{radii table}
\end{table}
\noindent If $\gcd(n,m) \neq 1$, the n-gon is degenerate, so from the above table, we conclude that no non-degenerate star n-gon has radius 1, and so $C_6$ is the only cyclic graph with an embedding on a unit circle (where the embedding is as a convex hexagon). Therefore, $\dim W_6 = 2$, and $W_n > 2$ for all other $n$.

We now see that all $W_n$ can be embedded in $\mathbb{R}^3$. Similarly to in $\mathbb{R}^2$, $W_n$ can be embedded if and only if all the vertices of $C_n$ can be embedded on a unit sphere. For $3 \leq n<6$, such an embedding is easy; simply place $C_n$ on one of the unit sphere's appropriately-sized lesser circles, as in Figure 3.
\begin{figure}[H]
\centering
\includegraphics[width=3cm]{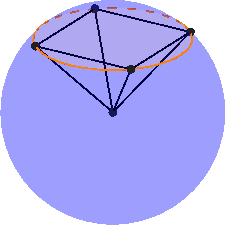}
\caption{An embedding of $W_4$ in $\mathbb{R}^3$}
\end{figure}
For $n \geq 6$, construct the embedding as follows: \begin{figure}[H]
\begin{center}
\includegraphics[width=3cm]{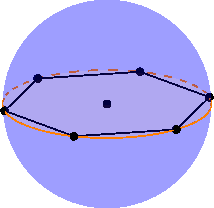}
\hspace{0.5cm}
\includegraphics[width=3cm]{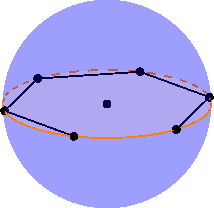}
\hspace{0.5cm}
\includegraphics[width=3cm]{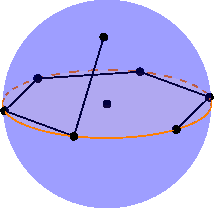} \hspace{0.5cm} \includegraphics[width=3cm]{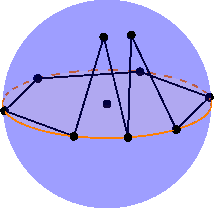} \hspace{0.5cm}
\includegraphics[width=3cm]{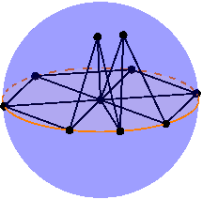}
\end{center}
\caption{The construction of $W_8$ embedded in the unit sphere.}
\label{wheelFig}
\end{figure}
\begin{itemize}
	\item Embed $C_6$ on a great circle of $S$.
	\item Set two adjacent vertices of $C_6$, $v$ and $u$, and remove their shared edge.
	\item Place the remaining necessary vertices on $S$ as in \Cref{wheelFig}, and connect them appropriately.
\end{itemize}
We have thus seen that $W_n$ can always be embedded in $\mathbb{R}^3$. Therefore, $\dim W_6 = 2$, and $\dim W_n = 3$ for all $n \neq 6$.
\\

Let us add another wrinkle. Define the \emph{$k^{th}$ degree wheel graph of length n}, denoted $W_n^k$, by $W_n^k = C_n + \epsilon_k$. We have already computed the dimensions of $W_n^1$, but what about $\dim W_n^2$, $\dim W_n^3$, $\dim W_n^k$? When computing the dimension of $W_n$, we relied on the fact that $C_n$ must lie on a unit circle/sphere in $\mathbb{R}^2$/$\mathbb{R}^3$, respectively. But consider $W_4^2$. In the incomplete embedding shown in \Cref{2-wheel}, $C_4$ must be embedded on a circle of radius less than 1, not a unit circle.
\begin{figure}[h!]
\begin{center}
\includegraphics[width=3cm]{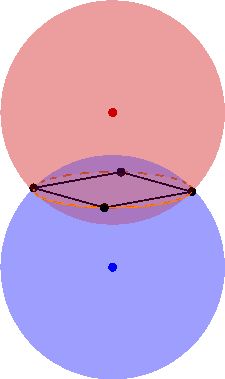}
\end{center}
\caption{The 4-cycle embedded with its vertices a distance 1 away from both the red and blue vertices. The red and blue spheres are the points a distance 1 away from the red and blue vertices, respectively. The orange circle are the points a distance 1 away from both the red and blue vertex.}
\label{2-wheel}
\end{figure}

This turns out to be a common requirement and is precisely why spherical dimension is useful. Recall that the spherical dimension of $G$, denoted $\sdim G$, is the smallest $n$ such that $G$ can be embedded in $\mathbb{R}^n$ with edges length 1 and vertices mapped to distinct points on an n-dimensional sphere of radius less than 1. (We define the spherical dimension of the null graph to be $- \infty$ and the spherical dimension of a vertex is defined to be 0.) Now that we have the concept of spherical dimension, we can state and prove three propositions (\Cref{spheres}, \Cref{Main1}, \Cref{connector}) that greatly simplify computing the dimension of the sum of two graphs. 
 
\begin{lem} \label{preMain1}
Let $S$ and $S'$ be two n-dimensional spheres with distinct centers and with radii $r$ and $q$, respectively. The intersection of $S$ and $S'$ either is empty, contains a single point, or is an (n-1)-dimensional sphere. If the intersection is an (n-1)-dimensional sphere, say $S''$, then $S''$ has the following properties:

(i) The radius of $S''$ is less than or equal to $r$ and $q$.

(ii) If $r=q$, the radius of $S''$ is strictly less than $r=q$.

(iii) If the center of $S$ lies on the surface of $S'$, the radius of $S''$ is strictly less than $r$.
\end{lem}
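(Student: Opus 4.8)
The plan is to reduce the entire statement to a one-variable computation by choosing coordinates and subtracting the two defining equations of the spheres. Recall that here an $n$-dimensional sphere of radius $r$ centered at $c$ is the set $\{x \in \mathbb{R}^n : |x-c| = r\}$. Place the center of $S$ at the origin and the center of $S'$ at $(d,0,\ldots,0)$, where $d>0$ is the distance between the centers (positive by hypothesis). A point $x$ lies in $S \cap S'$ iff $|x|^2 = r^2$ and $|x-c'|^2 = q^2$; subtracting these equations cancels the quadratic terms and leaves the single linear equation $x_1 = t$, where $t = \frac{d^2 + r^2 - q^2}{2d}$. Hence $S \cap S'$ is exactly $S$ intersected with the hyperplane $\{x_1 = t\}$, i.e. $\{x : x_1 = t,\ x_2^2 + \cdots + x_n^2 = r^2 - t^2\}$.

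From this description the trichotomy is immediate: the intersection is empty when $r^2 - t^2 < 0$, is the single point $(t,0,\ldots,0)$ when $r^2 - t^2 = 0$, and otherwise is an $(n-1)$-dimensional sphere $S''$ lying inside the hyperplane $\{x_1 = t\} \cong \mathbb{R}^{n-1}$, centered at $(t,0,\ldots,0)$ with radius $\rho = \sqrt{r^2 - t^2}$. Property (i) follows at once: $\rho \le r$ directly from this formula, and rerunning the same computation with the roles of $S$ and $S'$ interchanged (which does not change the set $S\cap S'$, hence not its radius) yields $\rho \le q$ as well.

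For (ii), observe that $\rho = r$ forces $t = 0$, i.e. $q^2 = r^2 + d^2$; since $d > 0$ and the radii are positive, this forces $q > r$. So when $r = q$ we cannot have $\rho = r$, and since $\rho \le r = q$ we get $\rho < r = q$. For (iii), the center of $S$ is the origin, which lies on $S'$ precisely when $d = q$; substituting $d = q$ into the formula for $t$ gives $t = r^2/(2q) > 0$, so again $t \ne 0$ and therefore $\rho < r$.

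I do not expect a genuine obstacle: the computation is short once the hyperplane reduction is in place. The points I would be careful about are the bookkeeping forced by the paper's convention that an $n$-dimensional sphere sits in $\mathbb{R}^n$ (so that slicing it by a hyperplane produces an $(n-1)$-dimensional sphere in the intended sense), cleanly handling the boundary cases (the single-point case $\rho = 0$, and the standing assumption of positive radii), and justifying in (i) that the one number $\rho$ may be compared against both $r$ and $q$ — which is why I would invoke the symmetry of the configuration rather than produce a second, independent radius formula.
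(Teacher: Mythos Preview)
Your proposal is correct and follows essentially the same approach as the paper: choose coordinates with the centers on the first axis, subtract the two sphere equations to reduce to a hyperplane $x_1 = t$, and read off the trichotomy and the radius $\rho = \sqrt{r^2 - t^2}$. The only cosmetic differences are in the verifications of the three properties: the paper argues (i) via the geometric remark that the center of $S''$ is the nearest equidistant point (hence no farther than either original center), whereas you read $\rho \le r$ off the formula and invoke symmetry for $\rho \le q$; and for (ii) and (iii) the paper plugs $r=q$ and $d=q$ directly into the expression for $\rho^2$, while you argue the contrapositive via $t \ne 0$. These are interchangeable and equally short.
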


\begin{proof}
Let $d$ be the distance between the centers of $S$ and $S'$. Without loss of generality, we may place the center of $S$ at the origin and the center of $S'$ at $(d,0,...,0)$. Therefore, 
\[
S = \left \{ (x_1,...,x_n) \text{ }|\text{ } x_1^2+...x_n^2 = r^2 \right \}
\]
and
\[
S' = \left \{ (x_1,...,x_n) \text{ }|\text{ } (x_1-d)^2+...+x_n^2 = q^2 \right \}.
\]
Subtracting $x_1^2+...x_n^2 = r^2$ from $(x_1-d)^2+...+x_n^2 = q^2$ yields a linear equation in $x_1$ alone. Solving this linear equation yields 
\[
x_1 = A
\]
for some constant $A$. Plugging back into the first equation yields 
\[
x_2^2+x_3^2+...+x_n^2 = R,
\]
where 
\[
R = r^2-\left ( \frac{r^2-q^2+d^2}{2d} \right )^2.
\]
Therefore,
\[
S \cap S' = \{ (x_1,...,x_n)\text{ }|\text{ }x_1=A,\text{ }x_2^2+x_3^2+...+x_n^2 = R \}
\]
If $R<0$, no real $x_2,...,x_n$ satisfy $x_2^2+x_3^2+...+x_n^2 = R$, so the intersection is empty. If $R=0$, $x_1 = A$ and $x_2=x_3=...=x_n = 0$ is the only solution to the previous equation, so the intersection is a point. Finally, if $R>0$, the set $S \cap S'$ defines an (n-1)-dimensional sphere.

Suppose the last case hold -- i.e., $S \cap S'$ is an (n-1)-dimensional sphere, $S''$. We now prove that $S''$ has the three desired properties.

(i) All the points of $S''$ are a distance $r$ away from the center of $S$ and a distance $q$ away from the center of $S'$. Since the center of $S''$ is the closest point equidistant from all the points of $S''$, it must be at least as close as the centers of $S$ and $S'$. In other words, $S''$ must have radius less than or equal to both $r$ and $q$.

(ii) If $r=q$, $R=r^2-\frac{d^2}{4}<r^2=q^2$, so  $0<R<r^2$. Since the radius of $S''$ is $\sqrt{R}$, $S''$ has radius less than $r=q$.

(iii) If the center of $S$ lies on the surface of $S'$, $d=q$. Thus, $R=r^2-\frac{r^4}{4q^2}$, so $0<R<r^2$, and so $S''$ has radius less than $r$.
    
We have now shown the three desired properties of $S''$, and so the proof is complete.
\end{proof}

\begin{prop} \label{spheres}
Let $|G|, |H| > 0$. Set an embedding of $G+H$. If either 
\begin{itemize}
\item $|H|>1$, or
\item $G+H$ lies on an n-dimensional sphere,
\end{itemize}
then $G$ lies on an $\sphere[m]$. 
\end{prop}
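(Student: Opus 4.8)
The plan is to reduce the statement entirely to \Cref{preMain1}. The only structural feature of $G+H$ we need is that every vertex of $G$ is adjacent to every vertex of $H$, so in the fixed embedding each vertex of $G$ lies at distance exactly $1$ from each vertex of $H$; equivalently, each vertex of $G$ lies on the unit sphere (in the ambient Euclidean space) centered at each vertex of $H$. If $|G|=1$ the conclusion is immediate---a single vertex is a $0$-dimensional sphere of radius $0<1$, so $G$ lies on an $\sphere[0]$---so from here on I assume $|G|\ge 2$ and split into two cases according to which hypothesis is in force.

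Suppose first that $|H|>1$. Pick two vertices $u,w$ of $H$; they occupy distinct points, so the unit spheres $S_u,S_w$ centered at them have distinct centers and common radius $1$. Every vertex of $G$ lies in $S_u\cap S_w$, and since $|G|\ge 2$ this intersection contains at least two distinct points. By \Cref{preMain1} it is then neither empty nor a single point, hence it is a sphere $S''$ of one lower dimension; and since $S_u$ and $S_w$ have equal radii, part~(ii) of \Cref{preMain1} forces the radius of $S''$ to be strictly below $1$. Thus $G$ is embedded with all its vertices on $S''$, an $\sphere[m]$, as desired.

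Suppose instead that $G+H$ lies on an $n$-dimensional sphere $T$. Pick any vertex $u$ of $H$ (one exists since $|H|>0$) and let $S_u$ be the unit sphere centered at $u$. Since $|G+H|\ge 2$, the sphere $T$ is not a single point, so it has positive radius; and since $u$ is a vertex of $G+H$ it lies on $T$, so in particular the center of $T$ differs from $u$. Hence $S_u$ and $T$ are spheres with distinct centers, every vertex of $G$ lies in $S_u\cap T$, and (again using $|G|\ge 2$) that intersection is a genuine sphere $S''$ of one lower dimension. Because the center of $S_u$, namely $u$, lies on the surface of $T$, part~(iii) of \Cref{preMain1} gives that $S''$ has radius strictly less than that of $S_u$, i.e.\ strictly less than $1$. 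So $G$ lies on $S''=\sphere[m]$, and the proof is complete.

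I do not anticipate a real obstacle; the only point needing care is the bookkeeping that prevents the intersections $S_u\cap S_w$ and $S_u\cap T$ from degenerating to a point or the empty set---this is exactly why the case $|G|=1$ is peeled off first, and why $|G+H|\ge 2$ is invoked to keep $T$ from collapsing to a point. It is also instructive to note why both alternatives in the hypothesis are genuinely required: when $|H|=1$ there is only one unit sphere available, and without the assumption that $G+H$ sits on a sphere one can conclude only that $G$ lies on a unit sphere, of radius exactly $1$ rather than less, so the conclusion would fail.
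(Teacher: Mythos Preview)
Your proof is correct and follows essentially the same approach as the paper's: handle $|G|=1$ trivially, then in each of the two cases intersect the unit sphere about a vertex of $H$ with a second sphere (another unit sphere if $|H|>1$, or the ambient sphere $T$ otherwise) and invoke \Cref{preMain1}(ii) or (iii) to force the intersection's radius below $1$. Your added remark that $|G+H|\ge 2$ keeps $T$ from degenerating, so that its center is distinct from $u$, is a nice point the paper leaves implicit.
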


\begin{proof}
If $|G|=1$, the proof is trivial, so suppose $|G|>1$. Set an embedding of $G+H$ in $\mathbb{R}^n$, and let $v$ be a vertex of $H$. Then $G$ lies on an n-dimensional unit sphere, $S$, centered at $v$. Without loss of generality, let $v$ be at the origin, so 
\[
S = \{(x_1,...,x_n)\text{ }|\text{ }x_1^2+...+x_n^2 = 1\}.
\]

Now suppose that $|H|>1$. Then there exists another vertex of $H$, $v'$, and so $G$ also lies on a unit sphere, $S'$, centered at $v'$. Thus, $G$ lies on the intersection of $S$ and $S'$, so by \Cref{preMain1}, $G$ lies on either the empty set, a point, or a unique (n-1)-dimensional sphere, $S''$. Since $|G|>1$, $G$ does not lie on the empty set or on a point, so $G$ lies on $S''$. Since $S$ and $S'$ both have radius 1, \Cref{preMain1}(ii) yields that the radius of $S''$ is less than the radius of $S$, which is 1. In other words, $G$ lies on an $\sphere[n-1]$, as desired.

Now suppose that $G+H$ lies on a hyper-sphere. Then $G+H$ lies on an n-dimensional sphere, $S'$. Therefore, $G+H$ lies on the intersection of $S$ and $S'$, and so \Cref{preMain1} yields that $G+H$ lies on either the empty set, a point, or an (n-1)-dimensional sphere, $S''$. Since $|G|>1$, $G$ does not lie on the empty set or on a point, so $G$ lies on $S''$. Since the center of $S$, $v$, is a vertex of $H$, the center of $S$ lies on the surface of $S'$, and so \Cref{preMain1}(iii) yields that the radius of $S''$ is less than the radius of $S$, which is 1. In other words, $G$ lies on an $\sphere[n-1]$, as desired.
\end{proof}

\begin{prop} \label{Main1}
Let $G$ be a graph. Set an embedding of $G$ in $\mathbb{R}^{n+m}$ such that $G$ is embedded on a unique n-dimensional sphere $S$ with radius $r$. Then the points a distance $d>r$ away from all the points of $G$ form the unique m-dimensional sphere $S'$ that
\begin{itemize}
\item shares the same center as $S$,
\item lies in a plan orthogonal to the plane $S$ lies in, and
\item has radius $\sqrt{d^2-r^2}$.
\end{itemize}
(Note: We will mostly use this result with $r<d=1$.)
\end{prop}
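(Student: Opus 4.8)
The plan is to choose coordinates adapted to $S$. Place the center of $S$ at the origin, let $P\cong\mathbb{R}^n$ be the $n$-dimensional subspace of $\mathbb{R}^{n+m}$ in which $S$ lies, and let $P^{\perp}\cong\mathbb{R}^m$ be its orthogonal complement, so that we may write each point of $\mathbb{R}^{n+m}$ uniquely as $x=(y,z)$ with $y\in P$ and $z\in P^{\perp}$. The first step is to extract the real content of the hypothesis that $S$ is the \emph{unique} $n$-dimensional sphere carrying $G$: it forces the vertices of $G$ to affinely span all of $P$. Indeed, if their affine hull were some proper subspace $P_0\subsetneq P$, then $S\cap P_0$ would be a sphere of dimension less than $n$ still containing $G$, and the $n$-dimensional spheres through that smaller sphere form a positive-dimensional family, contradicting uniqueness. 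Spelled out in these coordinates, fixing one vertex $g_0=(u_0,0)$ of $G$, the vectors $u-u_0$ over the vertices $g=(u,0)$ of $G$ then span $P$.

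With this in hand, the heart of the argument is a single distance expansion, just as in \Cref{preMain1}. For a vertex $g=(u,0)$ of $G$ (so $|u|=r$, since $g$ lies on $S$) and a point $x=(y,z)$, one has
\[
|x-g|^{2}=|y|^{2}-2\langle y,u\rangle+r^{2}+|z|^{2}.
\]
Requiring this to equal a common value $d^{2}$ for \emph{every} vertex $g$ of $G$ is equivalent to requiring $\langle y,u\rangle$ to be independent of $g$; subtracting the equations for two vertices and invoking the spanning property from the first step gives $\langle y,w\rangle=0$ for all $w\in P$, hence $y=0$. Thus every point at distance $d$ from all vertices of $G$ lies in $P^{\perp}$ and satisfies $|z|^{2}=d^{2}-r^{2}$, and conversely every such point $(0,z)$ is at distance $d$ from all of $G$ by the same identity.

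Finally I would assemble the conclusion. Since $d>r$, the quantity $d^{2}-r^{2}$ is positive, so the locus just described is exactly the set of points of $P^{\perp}\cong\mathbb{R}^m$ at distance $\sqrt{d^{2}-r^{2}}$ from the origin: an $m$-dimensional sphere (recall that for $m=1$ this is a pair of points) whose center is the origin, i.e.\ the center of $S$; which lies in $P^{\perp}$, a plane orthogonal to the plane $P$ containing $S$; and whose radius is $\sqrt{d^{2}-r^{2}}$. Uniqueness is automatic because we have pinned the locus down set-theoretically. The one genuinely delicate point is the spanning claim in the first step: that is where the hypothesis ``unique $n$-dimensional sphere'' is actually spent, and deducing $y=0$ (rather than merely that $y$ lies in some subspace) depends on it; everything afterward is the routine computation above.
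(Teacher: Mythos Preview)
Your proof is correct and follows essentially the same architecture as the paper's: choose coordinates so that $S$ sits in the first $n$ coordinates with center at the origin, use the distance formula to reduce the problem to showing the projection $y$ of any equidistant point onto $P$ vanishes, and then read off $|z|^{2}=d^{2}-r^{2}$. The only difference is in how the uniqueness hypothesis is cashed in to force $y=0$: the paper observes that the vertices would otherwise lie on two distinct $n$-spheres in $P$ (one centered at the origin, one at $y$) and invokes \Cref{preMain1} to get a contradiction, whereas you extract the equivalent affine-spanning statement directly and finish with an inner-product argument---a slightly more self-contained but otherwise identical step.
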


\begin{proof}
Set an embedding of $G$ in $\mathbb{R}^{n+m}$ such that $G$ is embedded on a unique n-dimensional sphere of radius $r$, $S$. Without loss of generality, let
\[
S = \left \{ (x_1,...,x_{n+m})\text{ }|\text{ }x_1^2+...x_n^2=r^2, \text{ }x_{n+1}=...=x_{m}=0 \right \}.
\]

Now, let $p=(p_1,...,p_{n+m})$ be a point a distance $d$ away from every vertex of $G$, and let $p_{proj} = (p_1,...,p_n,0,...,0)$. For every vertex $v$ of $G$,
\[
|p_{proj}-v| = \sqrt{d^2-p_{n+1}^2-...-p_{n+m}^2}.
\]
In other words, putting 
\[
q = \sqrt{d^2-p_{n+1}^2-...-p_{n+m}^2},
\]
we have that the vertices of $G$ must lie on an $\sphere[n]$ of radius $q$ centered at $p_{proj}$.

Of course, the points of $G$ are also on $S$. Therefore, the points of $G$ must lie on the intersection of two $\sphere[n]$s. By choice of embedding, $G$ cannot be embedded on an $\sphere[n-1]$ (and clearly cannot be embedded on a set of one point or the empty set), so \Cref{preMain1} yields that $p_{proj}$ must be the center of $S$, which is the origin. Since $p_{proj}$ lies on the origin, $p$ must be of the form $(0,...,0,p_{n+1},...,p_{n+m})$.

Now, let $v=(v_1,...,v_n,0,...,0)$ be a vertex of $G$. The Euclidean distance between $p$ and $v$ is
\[
d=\sqrt{v_1^2+...+v_n^2+p_{n+1}^2+...+p_{n+m}^2}.
\]
Since $v_1^2+...+v_n^2=r^2$, this equation can be rewritten as
\[
p_{n+1}^2+...+p_{n+m}^2 = d^2-r^2,
\]
so the set of points a distance $d$ away from all the vertices of $G$ are
\[
\left \{ (x_1,...,x_n)\text{ }|\text{ }x_1=...=x_n=0, x_{n+1}^2+...x_m^2=d^2-r^2 \right \}.
\]
Since $d>r$, $d^2-r^2>0$, and so the above set defines an $\sphere[m]$. Indeed, this $\sphere[m]$ has the same center as $S$ (the origin) and is orthogonal to $S$, as desired.
\end{proof}

\begin{thm} \label{preconnector} 
Let $|G|,|H|>1$. Then the following two statements hold:

Suppose that $G$ and $H$ can be embedded on a $\sphere[n_1]$ and $\sphere[n_2]$, respectively, such that the squares of the radii sum to 1. Then $G+H$ can be embedded in $\mathbb{R}^{n_1+n_2}$. 

Conversely, if $G+H$ can be embedded in $\mathbb{R}^n$, there exists $n_1$ and $n_2$ such that $n_1 + n_2 = n$ and $G$ and $H$ can be embedded on an $\sphere[n_1]$ and $\sphere[n_2]$, respectively, such that the squares of the radii sum to 1.
\end{thm}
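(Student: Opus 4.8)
\emph{Proof proposal.} The two halves of \Cref{preconnector} have different flavours. For the forward implication the plan is a product-type construction, and for the converse I would extract from a given embedding of $G+H$ the ``circumspheres'' of $G$ and of $H$ and study how their centres sit relative to each other; everything after that is bookkeeping forced by one orthogonality fact.

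For the forward direction, write $\mathbb{R}^{n_1+n_2}=\mathbb{R}^{n_1}\times\mathbb{R}^{n_2}$. Given an embedding of $G$ on an $n_1$-sphere of radius $r_1$ centred at the origin of the first factor and of $H$ on an $n_2$-sphere of radius $r_2$ centred at the origin of the second factor, with $r_1^2+r_2^2=1$, include both into $\mathbb{R}^{n_1+n_2}$ along the coordinate subspaces. Edges internal to $G$ or to $H$ keep length $1$; for $v\in G$ and $w\in H$ the vectors $v$ and $w$ are orthogonal of lengths $r_1$ and $r_2$, so $|v-w|=\sqrt{r_1^2+r_2^2}=1$, which handles the cross-edges. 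It then remains to check this is an embedding in the sense of the dimension definition: the two factors meet only at the origin, which lies on neither sphere since $r_1,r_2>0$ (here $|G|,|H|>1$), so the vertices are distinct; and no edge passes through a vertex, since a point on a cross-edge $vw$ has nonzero component in both factors while every vertex has a zero component in one factor, and an edge internal to $G$ (respectively $H$) has zero second (respectively first) component while every vertex of $H$ (respectively $G$) has nonzero second (respectively first) component. Hence $\dim(G+H)\le n_1+n_2$. I expect this half to be routine.

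For the converse, fix an embedding of $G+H$ in $\mathbb{R}^n$ and let $A_G,A_H$ be the affine hulls of the vertex sets of $G$ and $H$, of dimensions $n_1$ and $n_2$. Since $|H|>1$, \Cref{spheres} gives that $G$ lies on a sphere of radius $<1$; orthogonally projecting that sphere's centre onto $A_G$ produces a point $c_G\in A_G$ equidistant from every vertex of $G$, at some distance $r_1<1$, and $c_G$ is the unique such point because the vertices of $G$ affinely span $A_G$. Thus $G$ lies on the $\sphere[n_1]$ of radius $r_1$ centred at $c_G$, and symmetrically $H$ lies on a $\sphere[n_2]$ of radius $r_2$ centred at $c_H$; note $r_1,r_2>0$ as $|G|,|H|>1$. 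The geometric core is to locate $c_G$ relative to $c_H$: every vertex of $H$ is at distance $1$ from all vertices of $G$, so its orthogonal projection onto $A_G$ is equidistant from all vertices of $G$ and hence equals $c_G$, whence $A_H$ lies in the flat through $c_G$ orthogonal to the direction space of $A_G$; the same statement holds with $G$ and $H$ exchanged. Reading these off shows that the direction spaces of $A_G$ and $A_H$ are mutually orthogonal and that $\nu:=c_G-c_H$ is orthogonal to both. Therefore, for $v\in G$ and $w\in H$, the decomposition $v-w=(v-c_G)+\nu+(c_H-w)$ is orthogonal, so $1=|v-w|^2=r_1^2+\delta^2+r_2^2$ with $\delta:=|\nu|$; and the direction space of the affine hull of $G\cup H$ is the orthogonal sum of those of $A_G$, of $A_H$, and, when $\delta>0$, of the line spanned by $\nu$, which forces $n_1+n_2\le n$, with strict inequality whenever $\delta>0$.

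To finish, I would absorb the defect $\delta$ and any slack in the ambient dimension into $G$'s sphere. Put $k=n-n_1-n_2\ge 0$; by the inequality just obtained $k\ge 1$ whenever $\delta>0$. Take $n_1'=n_1+k$, $n_2'=n_2$, $r_1'=\sqrt{r_1^2+\delta^2}$, $r_2'=r_2$: then $n_1'+n_2'=n$, $r_1'^2+r_2'^2=1$, and $r_1',r_2'\in(0,1)$ since $r_1'^2=1-r_2^2<1$ and $r_2'^2=1-r_1'^2<1$. A round $n_1$-sphere of radius $r_1$ sits isometrically as a parallel sub-sphere of a round $n_1'$-sphere of radius $r_1'$ whenever $n_1'\ge n_1$ and $r_1'\ge r_1$, with $n_1'>n_1$ in the case $r_1'>r_1$ --- exactly the situation here, the displacement being $\sqrt{r_1'^2-r_1^2}=\delta$ --- and transplanting $G$'s embedding along this inclusion keeps all edge lengths equal to $1$, keeps vertices distinct, and creates no edge through a vertex, so $G$ embeds on a $\sphere[n_1']$ and $H$ on a $\sphere[n_2']$ as required. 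The main obstacle is the geometric core of the converse, namely proving that $c_G-c_H$ is orthogonal to both affine hulls and that those hulls are mutually orthogonal; once that is in place the Pythagorean identity, the dimension count, and the final re-embedding are all formal, and the one delicate point --- that the dimension count excludes ``$\delta>0$ but $k=0$'', so that inflating the radius is always accompanied by inflating the sphere's dimension --- is itself part of that count.
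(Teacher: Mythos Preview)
Your argument is correct. The forward direction is the same product construction as the paper's, just with the embedding checks spelled out more carefully.

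For the converse you take a genuinely different route from the paper. The paper picks the \emph{unique lowest-dimensional} sphere on which $G$ lies (dimension $n_1$, radius $r_1$) and then invokes \Cref{Main1} once: the locus of points at distance $1$ from every vertex of $G$ is an $(n-n_1)$-sphere of radius $\sqrt{1-r_1^2}$, and $H$ sits on it; setting $n_2=n-n_1$ finishes immediately with no slack to absorb. You instead treat $G$ and $H$ symmetrically via their affine hulls and circumcentres, prove by hand that the two hulls are mutually orthogonal and that $\nu=c_G-c_H$ is orthogonal to both, obtain the Pythagorean relation $1=r_1^2+\delta^2+r_2^2$, and then repair the defect $(\delta,k)$ by inflating $G$'s sphere. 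This is longer but more self-contained: you never need \Cref{Main1}, and in fact your projection step (each vertex of $H$ projects to $c_G$) essentially rederives its content. The one place your detour costs real work is the final bookkeeping---showing $k\ge 1$ whenever $\delta>0$ so that the radius increase is matched by a dimension increase---which the paper's asymmetric choice of $n_2=n-n_1$ sidesteps entirely. Either approach is fine; the paper's is shorter because it cashes in a lemma already on the shelf, while yours would survive in a setting where \Cref{Main1} had not been isolated.
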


\begin{proof}
Suppose that $G$ and $H$ can be embedded in $\sphere[n_1]$ and $\sphere[n_2]$ as in the statement. Let $r_1$ be the radius of $\sphere[n_1]$ and $r_2$ be the radius of $\sphere[n_2]$. Then embedding $G$ on the $\sphere[n_1]$ defined by
\[
\left \{ (x_1,...,x_{n_1+n_2}) \mid x_1^2+...+x_{n_1}^2 = r_1^2, x_{n_1+1}=...=x_{n_1+n_2}=0 \right \},
\]
$H$ on the $\sphere[n_2]$ defined by
\[
\left \{ (x_1,...,x_{n_1+n_2}) \mid x_1=...=x_{n_1}=0, x_{n_1+1}^2+...+x_{n_1+n_2}^2=r_2^2 \right \},
\]
and adding all possible edges between $G$ and $H$ yields an embedding of $G+H$.

Now suppose that $G+H$ can be embedded in $\mathbb{R}^n$. By \Cref{spheres}, $G$ lies on a $\sphere[]$. Choose the unique lowest-dimensional such $\sphere[]$. Let its dimension be $n_1$ and its radius be $r_1$. Then by \Cref{Main1}, $H$ lies on an $\sphere[n-n_1]$ with radius $\sqrt{1-r_1^2}$. Setting $n_2 = n-n_1$ completes the proof.
\end{proof}

\begin{cor} \label{connector}
Let $|G|, |H| > 1$. If $G$ and $H$ can be embedded on a $\sphere[\sdim G]$ and $\sphere[\sdim H]$ such that the squares of the radii sum to 1, 
\[
\dim(G+H) = \sdim G+\sdim H.
\]
Otherwise, 
\[
\dim(G+H) > \sdim G + \sdim H.
\]
\end{cor}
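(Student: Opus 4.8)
The plan is to derive everything from \Cref{preconnector} together with the defining minimality of $\sdim$, so essentially no new geometry is needed; the work is purely in combining the two halves of \Cref{preconnector} with the definition of spherical dimension.

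First I would establish the inequality $\dim(G+H) \geq \sdim G + \sdim H$ unconditionally (it does not use the case hypothesis). Take an embedding of $G+H$ in $\mathbb{R}^n$ with $n = \dim(G+H)$. By the converse half of \Cref{preconnector} there are $n_1, n_2$ with $n_1 + n_2 = n$ such that $G$ embeds on an $\sphere[n_1]$ and $H$ embeds on an $\sphere[n_2]$, with the squares of the two radii summing to $1$. Since an $n_1$-dimensional sphere of radius less than $1$ sits inside $\mathbb{R}^{n_1}$, such an embedding of $G$ is exactly a witness that $\sdim G \leq n_1$, and likewise $\sdim H \leq n_2$. Hence $\dim(G+H) = n_1 + n_2 \geq \sdim G + \sdim H$.

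Next, the equality case: if $G$ and $H$ can be embedded on a $\sphere[\sdim G]$ and a $\sphere[\sdim H]$ with the squares of the radii summing to $1$, then the forward half of \Cref{preconnector} produces an embedding of $G+H$ in $\mathbb{R}^{\sdim G + \sdim H}$, giving $\dim(G+H) \leq \sdim G + \sdim H$. Combined with the previous paragraph this yields $\dim(G+H) = \sdim G + \sdim H$. For the strict case, suppose $G$ and $H$ admit no such pair of embeddings, and assume toward a contradiction that $\dim(G+H) \leq \sdim G + \sdim H$; by the first paragraph equality then holds, so $G+H$ embeds in $\mathbb{R}^{\sdim G + \sdim H}$. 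Applying the converse half of \Cref{preconnector} gives $n_1, n_2$ with $n_1 + n_2 = \sdim G + \sdim H$, with $G$ on an $\sphere[n_1]$ and $H$ on an $\sphere[n_2]$, radii-squares summing to $1$; but $n_1 \geq \sdim G$ and $n_2 \geq \sdim H$ with $n_1 + n_2 = \sdim G + \sdim H$ forces $n_1 = \sdim G$ and $n_2 = \sdim H$, contradicting the assumption. Therefore $\dim(G+H) > \sdim G + \sdim H$.

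The main (mild) obstacle is the bookkeeping in the last paragraph: one must be careful that "$G$ embeds on an $\sphere[m]$'' is \emph{literally} the statement $\sdim G \leq m$, so that the pigeonhole step $n_1 + n_2 = \sdim G + \sdim H$ collapses to $n_1 = \sdim G$ and $n_2 = \sdim H$; with that identification in hand the corollary is a direct consequence of \Cref{preconnector}.
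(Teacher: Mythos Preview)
Your proposal is correct and is exactly the argument the paper has in mind: the paper's proof of \Cref{connector} reads, in its entirety, ``This follows easily of \Cref{preconnector},'' and what you have written is precisely the spelling-out of that sentence. The only content beyond \Cref{preconnector} is the observation that an embedding of $G$ on an $\sphere[m]$ witnesses $\sdim G \leq m$, which you identify and use correctly in the pigeonhole step.
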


\begin{proof}
This follows easily of \Cref{preconnector}.
\end{proof}

Now that we have \Cref{connector}, computing $\dim W_n^k$ reduces to computing the spherical dimension of $C_n$.

\begin{lem} \label{Final Wheel Lem}
For $n \neq 6$, $\sdim C_n = 2$, but $\sdim C_6 = 3$.
\end{lem}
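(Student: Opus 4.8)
The plan is to reduce the statement to a classification of circle‑embeddings of cycles together with one small number‑theoretic fact. First observe that $\sdim C_n \ge \dim C_n = 2$ for every $n \ge 3$ (indeed three distinct points cannot lie on a $\sphere[1]$, which is two points), so everything below concerns the upper bounds and the sharper lower bound $\sdim C_6 > 2$.

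Next I would establish the geometric heart of the argument: \emph{any} embedding of $C_n$ with unit edges and all vertices on a circle of radius $\rho$ is a regular (possibly star) polygon. Every edge is a chord of length $1$, so all edges subtend the same central angle $\theta$ with $2\rho\sin(\theta/2)=1$; traversing the cycle, the angular position of each successive vertex changes by $+\theta$ or $-\theta$. Recording these signed increments as a $\pm1$‑walk and noting that two consecutive opposite steps would make a vertex coincide with the one two steps earlier, all interior steps must be equal; the closure condition then forces every step equal and forces $\theta = 360^\circ k/n$ for an integer $k$ with $\gcd(k,n)=1$ (distinctness of the $n$ vertices; if $\gcd(k,n)>1$ the ``polygon'' splits into several disjoint cycles and is not an embedding of $C_n$). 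A short check confirms this covers the cases $\theta/360^\circ$ irrational, rational with small denominator (pigeonhole), and rational with denominator $\ge n$ uniformly. The circumradius of $\{n/k\}$ is $\tfrac1{2\sin(k\pi/n)}$, which is $<1$ exactly when $n/6 < k$. Hence $\sdim C_n = 2$ if and only if there is an integer $k$ with $\gcd(k,n)=1$ and $n/6 < k < n/2$.

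It then remains to decide for which $n$ such a $k$ exists. For $3\le n\le5$ take $k=1$ (the convex $n$‑gon; its radius is $<1$ since $n<6$). For $n=6$ the only integer strictly between $n/6=1$ and $n/2=3$ is $k=2$, and $\gcd(2,6)\ne1$, so no valid $k$ exists and therefore $\sdim C_6>2$. For $n\ge7$, Bertrand's postulate gives a prime $p$ with $\lfloor n/6\rfloor < p \le 2\lfloor n/6\rfloor\le n/3$, so $n/6<p<n/2$; if $p\nmid n$ we take $k=p$, and the finitely many remaining $n$ (those for which every prime Bertrand produces in this window divides $n$, e.g. $n=8,10,12,15,20,25$) are dispatched by writing down an explicit coprime $k$ in $(n/6,n/2)$ in each case.

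Finally I would show $\sdim C_6\le3$ by a concrete spherical realization, giving $\sdim C_6=3$: the regular octahedron with unit edges has its six vertices on a sphere of radius $1/\sqrt2<1$, the octahedron graph $K_{2,2,2}$ contains a Hamiltonian $6$‑cycle, and no straight segment between adjacent octahedron vertices passes through a third vertex, so this embeds $C_6$ on a $\sphere[3]$ (equivalently, ``crowning'' a regular hexagon by raising alternate vertices to heights $\pm b$ gives edge length $\sqrt{a^2+4b^2}$ and center‑distance $\sqrt{a^2+b^2}$, so $a^2+4b^2=1$ with $b>0$ works). I expect the main obstacle to be the geometric classification in the second paragraph — rigorously ruling out all non‑regular circle embeddings of $C_n$ — and, secondarily, making the finite case‑check in the third paragraph genuinely exhaustive; the remaining work (the radius formula and the explicit embeddings) is routine.
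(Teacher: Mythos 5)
Your overall strategy is the same as the paper's: reduce the lemma to the existence of a regular (possibly star) unit-edge $n$-gon of circumradius less than $1$ traced by a single $n$-cycle, i.e.\ to the existence of an integer $k$ with $\gcd(k,n)=1$ and $n/6<k<n/2$. Your second paragraph in fact supplies the rigidity argument (every unit-edge embedding of $C_n$ on a circle has all angular steps equal, hence is such a polygon) that the paper only asserts, and your octahedron realization of $C_6$ on a sphere of radius $1/\sqrt{2}$ is a perfectly good substitute for the paper's figure-based construction. Those parts are sound.

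The gap is in the third paragraph, in the passage from Bertrand's postulate to ``finitely many remaining $n$.'' Bertrand guarantees only \emph{one} prime $p$ in $\left(\lfloor n/6\rfloor,\,2\lfloor n/6\rfloor\right]$, and that single prime can divide $n$ for infinitely many $n$ a priori (any $n$ of the form $3p$, $4p$, or $5p$ places a prime divisor of $n$ in the window). To conclude that the exceptional set is finite you need to know the window contains at least \emph{two} primes once $\lfloor n/6\rfloor$ is at least $6$ --- true, but a strengthening of Bertrand that you neither cite nor prove --- together with the observation that at most one prime exceeding $n/6$ can divide $n$ once $n\geq 36$. Moreover the finite list you offer is not correct even below that range: for $n=35$ the window $(5,10]$ contains only the prime $7$, which divides $35$, so $35$ must be treated separately (e.g.\ $k=6$ works), whereas $25$ need not be on the list at all ($7$ lies in its window and is coprime to $25$). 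The conclusion you want is true, and the paper reaches it by a more elementary device --- choose two \emph{consecutive} integers in the open interval $(n/6,n/2)$, which exists since the interval has length greater than $2$ for $n>6$, and show both cannot be obstructed outside a short explicit list --- but as written your reduction to a finite check is not established, and the check itself is not exhaustive.
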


\begin{proof}
Given \Cref{radii table}, showing that $\sdim C_n = 2$ is equivalent to showing that there exists an $m$ such that $m \nmid n$ and $\frac{1}{6}n < m < \frac{1}{2}n$. For $n<6$, $m=1$ works. For $n=6$, clearly no $m$ exists, but by methods similar to those in \Cref{wheelFig}, $C_6$ can be embedded on a $\sphere[3]$.

To complete the proof, we must show that such an $m$ exists for $n>6$. Set $n>6$. There exists an $m'$ such that $\frac{1}{6}n < m' < m'+1 < \frac{1}{2}n$. If they both divide $n$, then $m'(m'+1)$ divides $n$ because consecutive integers are relatively prime. Since $\frac{1}{6}n < m'$, we conclude $(m'+1) < 6$. We now complete the proof by checking the cases $m' = 1,...,4$:
\begin{itemize}
    \item If $m'=1$, $m'<1<\frac{1}{6}n$, so we are done.
    \item If $m'=2$, $m'>\frac{1}{6}n$, and $m'(m'+1) \mid n$, then $n = 6$, so we are done.
    \item If $m'=3$, $m'>\frac{1}{6}n$, and $m'(m'+1) \mid n$, then $n = 12$. But $m=5$ works, so we are done.
    \item If $m'=4$, $m'>\frac{1}{6}n$, and $m'(m'+1) \mid n$, then $n=20$. But $m=6$ works, so we are done.
\end{itemize}
Thus, the desired $m$ exists for every $n>6$, and so $\sdim C_n = 2$ for all $n>6$.
\end{proof}

\begin{cor} \label{Final Wheel}
The dimensions of the wheel graphs are as follows:
\begin{figure}[H]
\centering
\begin{tabular}{ c | c | c }
 \textbf{Dimension of $W_n^k$} & $n=6$ & $n \neq 6$ \\ \hline
$k=1$     & 2 & 3 \\
$k=2$     & 4 & 3 \\
$k\geq 3$ & 5 & 4 \\
 \end{tabular}
\end{figure}
\end{cor}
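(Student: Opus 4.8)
The plan is to read the entire table off from \Cref{connector} and \Cref{Final Wheel Lem}, handling the $k=1$ row separately because \Cref{connector} does not apply there. Since $W_n^1 = W_n$, the entries $\dim W_6^1 = 2$ and $\dim W_n^1 = 3$ (for $n \ne 6$) are precisely what was established earlier in this section, so I would simply cite that discussion and move on.

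For $k \ge 2$ I would write $W_n^k = C_n + \epsilon_k$. Both summands have more than one vertex, so \Cref{connector} is available, and the whole computation reduces to (i) the spherical dimension of $\epsilon_k$ and (ii) checking the radius-sum condition. For step (i): two distinct points lie on a $1$-dimensional sphere (a pair of points) of any radius in $(0,1)$ but cannot lie on a single point, so $\sdim \epsilon_2 = 1$; for $k \ge 3$, $k$ distinct points lie on a circle of any radius in $(0,1)$ but cannot fit on a $1$-dimensional sphere, which has only two points, so $\sdim \epsilon_k = 2$. The point I would stress is that in all of these cases $\epsilon_k$ can be placed on a $\sphere[\sdim \epsilon_k]$ of \emph{any} prescribed radius in $(0,1)$.

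For step (ii): by the definition of spherical dimension together with \Cref{Final Wheel Lem}, $C_n$ embeds on a $\sphere[\sdim C_n]$, say of radius $r_1 \in (0,1)$, where $\sdim C_n = 3$ if $n = 6$ and $\sdim C_n = 2$ otherwise. Placing $\epsilon_k$ on a $\sphere[\sdim \epsilon_k]$ of radius $\sqrt{1 - r_1^2} \in (0,1)$ makes the squares of the two radii sum to $1$, so \Cref{connector} yields $\dim W_n^k = \sdim C_n + \sdim \epsilon_k$. Substituting the four possible pairs $(\sdim C_n, \sdim \epsilon_k) \in \{(2,1),(2,2),(3,1),(3,2)\}$ then gives $\dim W_6^2 = 4$, $\dim W_6^k = 5$ for $k \ge 3$, and $\dim W_n^2 = 3$, $\dim W_n^k = 4$ for $n \ne 6$, $k \ge 3$, which together with the $k=1$ row completes the table.

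The routine parts are the small-graph spherical-dimension counts. The one genuine subtlety — what I would call the main obstacle — is keeping straight that \Cref{connector} requires $|H| > 1$ and therefore cannot generate the $k = 1$ column at all (this is exactly why $W_n$ had to be handled by hand earlier), together with being careful that $\epsilon_k$ really does embed on a sphere of \emph{every} radius in $(0,1)$, so that the value $\sqrt{1-r_1^2}$ forced by the radius-sum condition is genuinely attainable.
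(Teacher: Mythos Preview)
Your proposal is correct and follows essentially the same approach as the paper: handle $k=1$ by citing the earlier computation of $\dim W_n$, and for $k\ge 2$ apply \Cref{connector} together with \Cref{Final Wheel Lem}, using that $\epsilon_k$ embeds on a $\sphere[\sdim\epsilon_k]$ of any radius in $(0,1)$. The paper's proof is more terse, but the logic is identical.
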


\begin{proof}
We already proved the result for $k=1$, so we consider only the cases for $k \geq 2$. Notice that $\epsilon_2$ can be embedded on a $\sphere[1]$ of any radius and that for $j>2$, $\epsilon_j$ can be embedded on a $\sphere[2]$ of any radius. The result now follows from \Cref{Final Wheel Lem} and \Cref{connector}.
\end{proof}


\section{Finding Graphs Minor Minimal with Respect to Dimension: $K_n$}

Once we have computed the dimension of a graph, $G$, we might begin wondering about the dimensions of its minors. This topic is not as simple as it first appears. For example, unlike subgraphs, minors sometimes have greater dimension than the original graph. Indeed, an example we worked with in the previous section illustrate this point: $W_5$ is a minor of $W_6$, but 
\[
\dim W_5 = 3 > 2 = \dim W_6.
\]

In this section, we restrict our focus to whether a graph $G$ is minor minimal with respect to dimension. Recall that $G$ is minor minimal with respect to dimension if every proper minor of $G$ has dimension less than $G$.

For example, consider $K_3$. Since $K_3$ has dimension 2 and all of its minors have dimension at most 1, $K_3$ is minor minimal with respect to dimension 2. 

It turns out that $K_n$ ($n \geq 3$) is minor minimal with respect to dimension $n-1$. To understand why, we must first explore some other special properties of $K_n$. In particular, notice that $K_3$'s embedding is ``unique'' and ``compact.'' More precisely, every embedding of $K_3$ is a triangle, and the vertices of $K_3$ lie on a circle of radius less than $\frac{\sqrt{2}}{2}$. The following proposition shows that these two properties are very useful:

\begin{prop} \label{Main2}
Set an embedding of $G$ such that $G$ lies on a unique $\sphere[n]$, which has radius $r$. Place a vertex a distance 1 away from all the vertices of $G$ (which is possible by \Cref{Main1}) and adjoin it to $G$. The result is an embedding of $G + vertex$ which lies uniquely on an (n+1)-dimensional sphere of radius $R=\frac{1}{2\sqrt{1-r^2}}$.
\end{prop}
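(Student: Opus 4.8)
The plan is to make the construction fully explicit in coordinates, write down the circumsphere of $G + vertex$ by hand, read off its radius from a one-line simplification, and then check that this sphere is forced.

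First I would normalise coordinates as in the proof of \Cref{Main1}: after an isometry, assume $G$ is embedded in $\mathbb{R}^{n+1}$ with its vertices on
\[
S=\left\{(x_1,\dots,x_{n+1})\mid x_1^2+\cdots+x_n^2=r^2,\ x_{n+1}=0\right\},
\]
the $n$-dimensional sphere of radius $r$ centred at the origin inside the hyperplane $\Pi=\{x_{n+1}=0\}$. Since $r<1$, applying \Cref{Main1} with $m=1$ and $d=1$ shows that the points a distance $1$ from all vertices of $G$ form the $1$-dimensional sphere $\{(0,\dots,0,\pm\sqrt{1-r^2})\}$, so the adjoined vertex may be taken to be $v=(0,\dots,0,h)$ with $h=\sqrt{1-r^2}>0$ (the sign being immaterial).

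Then comes the one genuine computation. I would look for a centre on the axis of $S$, that is a point $c=(0,\dots,0,t)$, equidistant from $v$ and from the vertices of $G$: for any vertex $w$ of $G$ one has $|c-w|^2=r^2+t^2$ while $|c-v|^2=(h-t)^2$, and equating these forces the single value $t=\frac{h^2-r^2}{2h}=\frac{1-2r^2}{2\sqrt{1-r^2}}$ (well defined since $h\neq 0$). For this $c$, all vertices of $G$, together with $v$, lie a distance $R:=\sqrt{r^2+t^2}$ from $c$, and substituting and simplifying gives $R^2=\dfrac{1}{4(1-r^2)}$, that is $R=\dfrac{1}{2\sqrt{1-r^2}}$. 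Hence $G + vertex$ is embedded on the $(n+1)$-dimensional sphere of radius $R$ centred at $c$ in $\mathbb{R}^{n+1}$.

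Finally I would dispatch uniqueness. Since $G$ lies on a \emph{unique} $n$-dimensional sphere it lies on no sphere of smaller dimension, so (arguing as in \Cref{Main1}) its affine hull is exactly $\Pi$; and $v\notin\Pi$ because $h>0$, so $G + vertex$ affinely spans $\mathbb{R}^{n+1}$ and in particular lies on no sphere of dimension $\leq n$. The centre of any $(n+1)$-dimensional sphere through $G + vertex$ is equidistant from all of these points; since $G$ spans $\Pi$ and lies on $S$, the equidistant locus of the vertices of $G$ is precisely the axis $\{(0,\dots,0,t)\}$ of $S$, and the extra equidistance from $v$ pins the centre to the $c$ found above and the radius to $R$. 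So the sphere just built is the only one, and the proposition follows. I expect the main obstacle to be not the algebra but this last point — teasing out of the hypothesis ``$G$ lies on a unique $n$-dimensional sphere'' the conclusion that the enlarged configuration again lies on a \emph{unique} $(n+1)$-dimensional sphere, which is exactly what makes the proposition safe to iterate in the later sections.
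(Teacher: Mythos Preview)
Your proof is correct and follows essentially the same route as the paper's: normalise so that $S$ sits in a coordinate hyperplane, place the new vertex at $(0,\dots,0,\sqrt{1-r^2})$ via \Cref{Main1}, solve for the unique centre on the axis by equating $r^2+t^2=(h-t)^2$, and read off $R=\frac{1}{2\sqrt{1-r^2}}$. Your uniqueness argument (using that $G$ affinely spans $\Pi$ to force the equidistant locus to be the axis) is in fact a little more explicit than the paper's, but the idea is the same.
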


\begin{proof}
Set an embedding of $G$ such that $G$ is embedded on a unique $\sphere[n]$, $S$, which has radius $r$. Without loss of generality, let
\[
S = \left \{ (x_1,...,x_{n+m}\suchthat x_1^2+...+x_n^2=r^2 \text{ and } x_{n+1} = ... = x_{n+m} = 0 \right \}. 
\]
From \Cref{Main1}, we know that one of the points a distance 1 away from $S$ is the point defined by $x_{n+1} = \sqrt{1-r^2}$ and all other $x_i = 0$. Call this point $p_v$. Without loss of generality, we construct $G+vertex$ by placing a vertex, $v$, on $p_v$ and adjoining $v$ to $G$.

We must now show that $G+vertex$ lies on a unique (n+1)-dimensional sphere, $S'$, which has radius $R=\frac{1}{2\sqrt{1-r^2}}$. Consider a potential center of $S'$, $p$. \Cref{Main1} guarantees that the points equidistant from $G$ must lie in the last $m$ coordinates, so $p$ is of the form $(0,...,0,x_{n+1},...,x_{n+m})$. Moreover, $p$ must be of the form $(x_1,...,x_{n+1},0,...,0)$ if $S'$ is to both be $n+1$ dimensional and contain $p_v$ and $S$. Therefore, $p = (0,...,0,p_{n+1},0,...,0)$ for some real number $p_{n+1}$.

Letting $p_S$ be any point on $S$, we have
\[
|p-p_S| = \sqrt{r^2+p_{n+1}^2}
\]
and
\[
|p-p_v| = \sqrt{1-r^2} - p_{n+1}.
\]
For $p$ to be the center of $S$, $p$ must be equidistant from both $p_v$ and $p_S$, so the two expressions must be equal. Solving this equality yields
\[
p_{n+1} = \frac{1-2r^2}{2\sqrt{1-r^2}}.
\]
Plugging back into $\left | p-p_v \right |$ yields the radius of $S'$, $\frac{1}{2\sqrt{1-r^2}}$, as desired. Since $S$ has uniquely defined both the center and radius of $S'$, it has uniquely defined $S'$, and so we're done.
\end{proof}

\begin{cor} \label{Main2Cor}
Set an embedding such that $G$ lies on a unique $\sphere[n]$ of radius $r$. Adjoin a vertex, $v$, and let $R$ be the radius of the unique (n+1)-dimensional sphere on which $G+v$ lies (as guaranteed by \Cref{Main2}).  

\textbf{a.} If $r < \frac{\sqrt{2}}{2}$, $r< R < \frac{\sqrt{2}}{2}$.

\textbf{b.} If $r = \frac{\sqrt{2}}{2}$, so does $R$.

\textbf{c.} If $r > \frac{\sqrt{2}}{2}$, $R-r>0$ and increases as $r$ increases.
\end{cor}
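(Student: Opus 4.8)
The plan is to treat the quantity $R=\frac{1}{2\sqrt{1-r^2}}$ supplied by \Cref{Main2} as an explicit function of the single real variable $r$ (well-defined on $[0,1)$, which is all we need since $G$ lies on a sphere of radius $r<1$), and to verify all three parts by elementary manipulation of this formula.

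First I would settle the comparison between $R$ and $\tfrac{\sqrt2}{2}$. Since both numbers are positive, squaring and clearing the denominator shows that $R<\tfrac{\sqrt2}{2}$ is equivalent to $1-r^2>\tfrac12$, i.e.\ to $r<\tfrac{\sqrt2}{2}$, and similarly $R=\tfrac{\sqrt2}{2}\iff r=\tfrac{\sqrt2}{2}$ and $R>\tfrac{\sqrt2}{2}\iff r>\tfrac{\sqrt2}{2}$. This disposes of part (b) entirely and of the ``$R<\tfrac{\sqrt2}{2}$'' half of part (a).

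The crux of the remaining work is the comparison between $R$ and $r$, and here the key algebraic observation is the identity
\[
R-r=\frac{1-2r\sqrt{1-r^2}}{2\sqrt{1-r^2}}=\frac{\bigl(r-\sqrt{1-r^2}\bigr)^2}{2\sqrt{1-r^2}},
\]
which follows by writing $1-2r\sqrt{1-r^2}=r^2+(1-r^2)-2r\sqrt{1-r^2}=(r-\sqrt{1-r^2})^2$. From this closed form one reads off immediately that $R-r\ge 0$ always, with equality exactly when $r=\sqrt{1-r^2}$, i.e.\ $r=\tfrac{\sqrt2}{2}$. In particular $r<R$ for every $r\ne\tfrac{\sqrt2}{2}$, which completes part (a) and gives the first assertion of part (c).

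For the monotonicity claim in part (c) I would use the same identity: on $\bigl(\tfrac{\sqrt2}{2},1\bigr)$ the function $r\mapsto r-\sqrt{1-r^2}$ is positive and increasing, hence so is its square, and $r\mapsto\frac{1}{2\sqrt{1-r^2}}$ is positive and increasing because $1-r^2$ is decreasing; a product of positive increasing functions is increasing, so $R-r$ increases with $r$ there. (If one prefers calculus, $R'(r)=\frac{r}{2(1-r^2)^{3/2}}$, and $R'(r)>1$ for $r>\tfrac{\sqrt2}{2}$ because that inequality rearranges to $r>2(1-r^2)^{3/2}$, the two sides being equal at $r=\tfrac{\sqrt2}{2}$ and then moving apart.) None of these steps is a genuine obstacle; the only thing requiring a little care is keeping the equality cases straight and noting that every squaring step is reversible because all the quantities involved are nonnegative.
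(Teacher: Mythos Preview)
Your argument is correct. The approach, however, is quite different from the paper's. The paper treats the corollary geometrically: it appeals to two figures depicting the $(n{+}1)$-sphere and the $n$-sphere in cross-section, observing that the $n$-sphere sits above, at, or below the center of the $(n{+}1)$-sphere according as $r$ is greater than, equal to, or less than $\tfrac{\sqrt2}{2}$, and reading off the inequalities (and the monotonicity of $R-r$) from those pictures. You instead work purely algebraically from the closed form $R=\frac{1}{2\sqrt{1-r^2}}$ supplied by \Cref{Main2}, and your key step---rewriting $R-r$ as $\frac{(r-\sqrt{1-r^2})^2}{2\sqrt{1-r^2}}$---makes the sign and the equality case transparent without any appeal to geometry. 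Your route is more self-contained and arguably more rigorous (the paper's ``it should also be obvious from the figure'' for the monotonicity in part~(c) is really a heuristic), while the paper's picture gives geometric intuition for \emph{why} $\tfrac{\sqrt2}{2}$ is the critical radius.
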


\begin{proof}
\Cref{add vertex 1} shows that $r<R$ if $r \neq \frac{\sqrt{2}}{2}$ and $r=R$ if $r=\frac{\sqrt{2}}{2}$. It should also be obvious from \Cref{add vertex 1} that $R-r$ increases as $r$ increases if $r>\frac{\sqrt{2}}{2}$ ($R$ will be bigger and the n-dimensional sphere will be farther above the center of (n+1)-dimensional sphere). \Cref{add vertex 2} shows that $R<\frac{\sqrt{2}}{2}$ if $r<\frac{\sqrt{2}}{2}$.
\end{proof}

\begin{figure}[h]
\centering
\includegraphics[width=5cm]{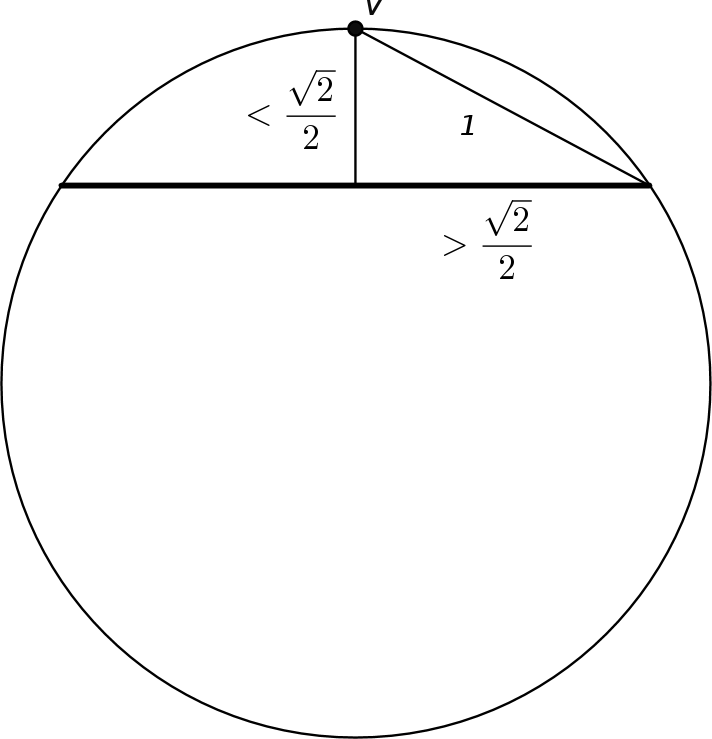}
\hspace{0.5cm}
\includegraphics[width=5cm]{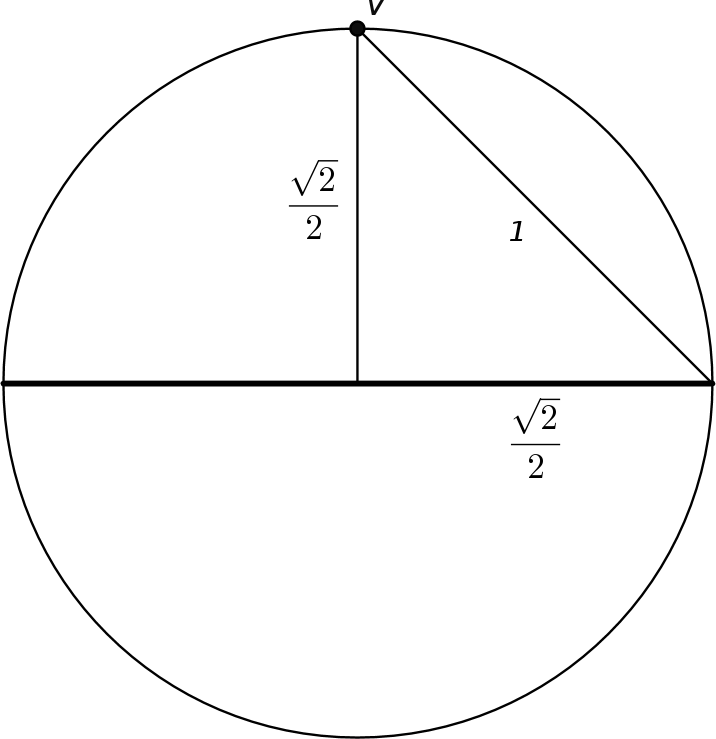}
\hspace{0.5cm}
\includegraphics[width=5cm]{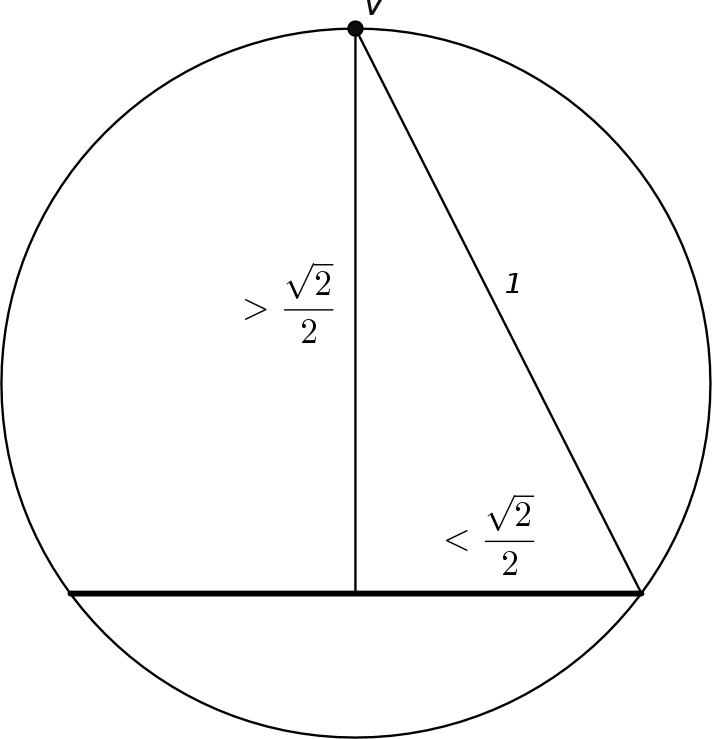}
\caption{The (n+1) and n-dimensional sphere projected into two dimensions (the circle and the thick line, respectively). If the n-dimensional sphere had radius greater than, equal to, or less than $\frac{\sqrt{2}}{2}$, it lies above, at, below the center of the (n+1)-dimensional sphere, respectively. }
\label{add vertex 1}
\end{figure}
\begin{figure}[h]
\begin{center}
\includegraphics[width=5cm]{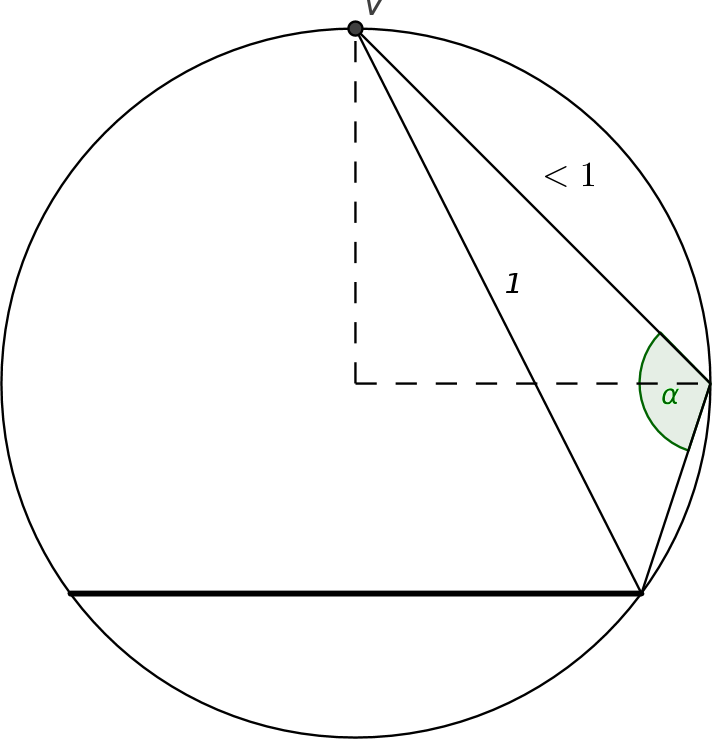}
\end{center}
\caption{If the n-dimensional sphere lies below the center of the (n+1)-dimensional sphere, $\alpha > \frac{\pi}{2}$. Therefore, the right triangle constructed from $v$ and two radii has hypotenuse less than 1, so the radius of the (n+1)-dimensional sphere is less than $\frac{\sqrt{2}}{2}$.}
\label{add vertex 2}
\end{figure}

Since $K_3$ has a unique embedding, \Cref{Main2} yields a unique embedding of $K_4$. Since $K_3$'s unique embedding is on a circle of radius less than $\frac{\sqrt{2}}{2}$, \Cref{Main2Cor} shows that $K_4$'s unique embedding is as well. And so on.

\begin{prop} \label{KSphere}
The graph $K_n$ has a unique embedding. In this embedding, $K_n$ lies on a unique $\sphere[n-1]$, which has radius less than $\frac{\sqrt{2}}{2}$.
\end{prop}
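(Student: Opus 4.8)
The plan is to prove both assertions of \Cref{KSphere} simultaneously by induction on $n$, using \Cref{Main2} and \Cref{Main2Cor} as the engine that carries the statement from $K_n$ to $K_{n+1}$. For $n=1,2$ the claim is immediate (a single vertex, or two vertices a distance $1$ apart lying on a $\sphere[1]$ of radius $\frac12<\frac{\sqrt2}{2}$). For the base case $n=3$ I would note that every embedding of $K_3$ is an equilateral triangle of side $1$, hence unique up to isometry, and that it lies on its circumcircle, which is a $\sphere[2]$ of radius $\frac{1}{\sqrt3}<\frac{\sqrt2}{2}$.

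For the inductive step, assume $K_n$ has a unique embedding, lying on a unique $\sphere[n-1]$ of radius $r<\frac{\sqrt2}{2}$, and write $K_{n+1}=K_n+vertex$. For \emph{existence}, start from the unique embedding of $K_n$; by \Cref{Main1} there is a point a distance $1$ from every vertex of $K_n$, and adjoining a vertex there produces an embedding of $K_{n+1}$. By \Cref{Main2} this embedding lies on a unique $n$-dimensional sphere of radius $R=\frac{1}{2\sqrt{1-r^2}}$, and since $r<\frac{\sqrt2}{2}$, \Cref{Main2Cor}(a) gives $r<R<\frac{\sqrt2}{2}$, which is exactly the hypothesis needed to continue the induction; in particular $R<1$, so this is a $\sphere[n]$.

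For \emph{uniqueness}, take an arbitrary embedding of $K_{n+1}$ and delete one vertex $v$, obtaining an embedding of $K_n$; by the inductive hypothesis this is the unique embedding of $K_n$, lying on a unique $\sphere[n-1]$ of radius $r$. Since $v$ is a distance $1>r$ from every vertex of $K_n$, \Cref{Main1} (applied with its ``$n$'' equal to $n-1$) pins the admissible locus for $v$ to a single sphere orthogonal to that $\sphere[n-1]$; inside the $n$-dimensional affine span of $K_n\cup\{v\}$ this locus is a $\sphere[1]$, i.e.\ two points interchanged by the reflection fixing that span, and in any larger ambient space the additional positions differ only by an isometry. Hence the embedding of $K_{n+1}$ is unique up to isometry, and by \Cref{Main2} it again lies on a unique $\sphere[n]$, completing the induction.

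The step I expect to be the main obstacle is the uniqueness half of the inductive step: one must argue carefully that the only freedom in locating the final vertex is the freedom of an ambient isometry (so that no ``exotic'' embedding, lying on no $(n-1)$-sphere, can arise) and that ``unique embedding'' is read consistently up to isometry throughout. Once this bookkeeping is settled, the rest is a direct application of the already-established \Cref{Main1}, \Cref{Main2}, and \Cref{Main2Cor}, and the radius bound $R<\frac{\sqrt2}{2}$ is automatic.
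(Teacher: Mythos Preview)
Your proposal is correct and follows essentially the same route as the paper: induction on $n$ with base case $K_3$, deleting a vertex from an arbitrary embedding of $K_{n+1}$ to invoke the inductive hypothesis, and then applying \Cref{Main2}/\Cref{Main2Cor} to carry both the uniqueness and the radius bound forward. The paper's version is terser (it does not separate existence from uniqueness or spell out the ``up to isometry'' bookkeeping you flag), but the underlying argument is identical.
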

\begin{proof}
We proceed by induction. We have already seen the case when $n=3$. For some $n\geq 3$, suppose $K_n$ has a unique embedding in which $K_n$ lies on an $\sphere[n-1]$ of radius $r < \frac{\sqrt{2}}{2}$ in this embedding. Set an embedding of $K_{n+1}$. Remove a vertex, $v$, from $K_{n+1}$ to get $K_n$. $K_n$ lies on an $\sphere[n-1]$ of radius $r$. Adding back $v$, \Cref{Main2Cor} yields that $K_{n+1}$ lies on a unique $\sphere[n]$, which has radius less than $\frac{\sqrt{2}}{2}$, as desired.
\end{proof}

\begin{cor} \label{KSDim}
The graph $K_n$ has dimension and spherical dimension $n-1$.
\end{cor}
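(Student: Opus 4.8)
The plan is to pin both quantities between $n-1$ and $n-1$. Since the paper already records the universal inequality $\dim K_n \le \sdim K_n$, it suffices to prove the two bounds $\sdim K_n \le n-1$ and $\dim K_n \ge n-1$, and then chain them as $n-1 \le \dim K_n \le \sdim K_n \le n-1$.

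The upper bound comes essentially for free from \Cref{KSphere}: that proposition already produces an embedding of $K_n$ whose vertices all lie on an $\sphere[n-1]$. Under the paper's convention an $m$-dimensional sphere has an $m$-dimensional affine hull, so this embedding sits inside $\mathbb{R}^{n-1}$; hence $\sdim K_n \le n-1$, and a fortiori $\dim K_n \le n-1$. For the lower bound I would again invoke \Cref{KSphere}, but now the \emph{uniqueness} clause. In any embedding of $K_n$ the vertices lie on a unique $(n-1)$-dimensional sphere. If these $n$ points spanned an affine subspace of dimension $k < n-1$, then there would be infinitely many $(n-1)$-dimensional spheres through them (take the $k$-dimensional circumsphere inside their affine span and sweep it through the remaining directions), contradicting uniqueness. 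Therefore the $n$ vertices are affinely independent and span an $(n-1)$-dimensional flat, so no embedding of $K_n$ can be fit into $\mathbb{R}^{n-2}$; that is, $\dim K_n \ge n-1$.

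The only mildly delicate point is the affine-independence step in the lower bound — equivalently, the classical fact that $n$ points at pairwise unit distance form a regular $(n-1)$-simplex — and I would state carefully why uniqueness of the circumscribed $(n-1)$-sphere forces the vertices to span the full $(n-1)$ dimensions. Everything else is bookkeeping with \Cref{KSphere} and the inequality $\dim \le \sdim$. (An alternative, slightly longer route avoids citing uniqueness and instead argues the lower bound by induction on $n$ using \Cref{Main2Cor}, since adjoining a vertex strictly raises the ambient dimension; but the argument above via \Cref{KSphere} is the most economical.)
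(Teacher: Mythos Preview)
Your argument is correct and is exactly what the paper's one-line ``direct consequence of \Cref{KSphere}'' is implicitly doing: the embedding on an $\sphere[n-1]$ gives the upper bound $\sdim K_n \le n-1$, and the uniqueness of that $(n-1)$-sphere forces the vertices to have $(n-1)$-dimensional affine span, giving $\dim K_n \ge n-1$. You have simply made explicit the affine-independence step that the paper leaves to the reader; the approach is the same.
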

\begin{proof}
This is a direct consequence of \Cref{KSphere}
\end{proof}

We have now completed the first step of showing that $K_n$ is minor minimal, computing the dimension of $K_n$. We must now complete the second step, showing that every proper minor of $K_n$ has lesser dimension:

\begin{lem} \label{KMin}
Let $H$ be a minor of $K_n$ (denoted $H \prec K_n$). Then $\dim H < n-1$.
\end{lem}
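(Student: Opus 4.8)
The plan is to reduce the statement to a single dimension computation, $\dim(K_n-e)=n-2$, by observing that every proper minor of $K_n$ is a subgraph of $K_n-e$ for some edge $e$. First I would establish this reduction. A proper minor $H$ of $K_n$ is produced by a nonempty sequence of vertex deletions, edge deletions, and edge contractions. If that sequence uses at least one vertex deletion or edge contraction, then $|H|\le n-1$, so $H$ is a subgraph of $K_{n-1}$; and since $K_{n-1}$ is obtained from $K_n-e$ by deleting an endpoint of $e$, we get $H\subgraph K_n-e$. If instead the sequence consists only of edge deletions, then $H$ is $K_n$ with at least one edge removed, so again $H\subgraph K_n-e$ for $e$ one of the removed edges. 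Thus in every case $H\subgraph K_n-e$ for some edge $e$ of $K_n$.

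Next I would compute $\dim(K_n-e)$. The key point is that $K_n-e=K_{n-2}+\epsilon_2$: the two endpoints of $e$ are nonadjacent to each other but adjacent to the remaining $n-2$ vertices, which induce $K_{n-2}$. For $n\ge 4$ I would apply \Cref{connector} with $G=K_{n-2}$ and $H=\epsilon_2$ (both on more than one vertex). By \Cref{KSphere} and \Cref{KSDim}, $K_{n-2}$ embeds uniquely on an $\sphere[n-3]=\sphere[\sdim K_{n-2}]$ of some radius $r<\tfrac{\sqrt2}{2}$, so $r^2<\tfrac12$; and $\epsilon_2$ embeds on a $\sphere[1]=\sphere[\sdim\epsilon_2]$ of arbitrary radius, in particular of radius $\sqrt{1-r^2}>0$. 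Since $r^2+(1-r^2)=1$, \Cref{connector} gives $\dim(K_n-e)=\sdim K_{n-2}+\sdim\epsilon_2=(n-3)+1=n-2$. For $n=3$ this argument does not apply directly, but then $K_3-e$ is just the path on three vertices, which has dimension $1=n-2$.

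Finally I would invoke the elementary fact that passing to a subgraph cannot raise the dimension: restricting an embedding of $K_n-e$ in $\mathbb{R}^{n-2}$ to the vertices of $H$ leaves the vertices distinct, keeps every edge of $H$ a unit segment, and makes no edge of $H$ pass through a vertex of $H$ (since no edge of $K_n-e$ passes through any vertex of $K_n-e$). Hence $\dim H\le\dim(K_n-e)=n-2<n-1$, which is the claim.

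The step I expect to require the most care is the first one: checking that the case split on the minor operations genuinely exhausts all proper minors and correctly tracks the vertex count, so that ``proper minor of $K_n$'' really does collapse to ``subgraph of some $K_n-e$.'' Once that is in hand, the rest is the identification $K_n-e=K_{n-2}+\epsilon_2$ together with \Cref{connector}, plus the monotonicity of dimension under subgraphs, all of which are routine.
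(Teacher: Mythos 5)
Your proof is correct and follows essentially the same route as the paper: both split proper minors into ``at least one vertex deletion or contraction, hence a subgraph of $K_{n-1}$'' versus ``edge deletions only, hence a subgraph of $K_n-e=K_{n-2}+\epsilon_2$,'' and both then place the two nonadjacent endpoints of $e$ on the set of points at unit distance from an embedding of $K_{n-2}$ on a small sphere. The only cosmetic differences are that you package the second step via \Cref{connector} where the paper applies \Cref{Main1} directly, and you dispense with the paper's induction wrapper, which is harmless.
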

\begin{proof}
We proceed by induction. We have already completed the base case. Assume every minor of $K_n$ has dimension less than $n-1$. We must show that $\dim H < n$ for every $H \prec K_{n+1}$. To do this, we consider two exhaustive cases:
\begin{description}[style=unboxed]
	\item[$H$ is obtained by removing at least one vertex or contracting at least one edge] Then $|H| \leq n$, so $H$ is a subgraph of $K_n$, and so $\dim H \leq \dim K_n = n-1 < n$, as desired.
	\item[$H$ is obtained by only removing edges] It suffices to show that $\dim K_{n+1} - edge < n$, so consider $K_{n+1} - edge$. Label the two vertices incident to the removed edge $v_1$ and $v_2$, and remove them both. The resulting subgraph of $K_{n+1}-edge$ is $K_{n-1}$, so \Cref{KSphere} yields that it lies on an $\sphere[n-2]$. Therefore, by \Cref{Main1}, there are two points a distance 1 from all the vertices of $K_{n-1}$ in $\mathbb{R}^{n-1}$. Placing $v_1$ and $v_2$ on these points and adjoining them to $K_{n-1}$ results in an embedding of $K_{n+1}-edge$ in $\mathbb{R}^{n-1}$. Thus, $\dim K_{n+1}-edge \leq n-1 < n$, as desired.
\end{description}
Therefore, regardless of how the minor $H \prec K_{n+1}$ is derived, $\dim H < n$. This completes the proof by induction.
\end{proof}

All together:

\begin{thm} \label{MainK}
The graph $K_n$ is minor minimal with respect to dimension $n-1$.
\end{thm}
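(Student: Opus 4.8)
The plan is to recognize that \Cref{MainK} is nothing more than the conjunction of the two facts already established in this section, so its proof consists of assembling them. Unwinding the definition, the assertion ``$K_n$ is minor minimal with respect to dimension $n-1$'' has two parts: that $\dim K_n = n-1$, and that $\dim H < n-1$ for every proper minor $H$ of $K_n$. (As in the section text, the claim is meant for $n \geq 3$; it genuinely fails at $n = 2$, where $\epsilon_2$ is a proper minor of $K_2$ with $\dim \epsilon_2 = 1 = \dim K_2$.)

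For the first part I would simply cite \Cref{KSDim}, which states $\dim K_n = n-1$. That result is itself the payoff of \Cref{KSphere}: by induction, adjoining a vertex at distance $1$ to a graph sitting on a small sphere again lands on a small sphere one dimension higher (\Cref{Main1}, \Cref{Main2}, \Cref{Main2Cor}), so $K_n$ lives on a unique $\sphere[n-1]$ of radius below $\frac{\sqrt{2}}{2}$ and hence has dimension exactly $n-1$. For the second part I would cite \Cref{KMin} directly: any proper minor obtained by deleting a vertex or contracting an edge has at most $n-1$ vertices, hence is a subgraph of $K_{n-1}$ and has dimension at most $n-2$; and the remaining case, deleting an edge, is handled by the pole trick that embeds $K_{n+1}-e$ in $\mathbb{R}^{n-1}$ by placing the two endpoints of $e$ at the two points equidistant from an embedded copy of $K_{n-1}$.

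Combining, $\dim K_n = n-1$ and every proper minor has strictly smaller dimension, which is exactly minor minimality with respect to dimension $n-1$. Consequently there is no real obstacle remaining in \Cref{MainK} itself; the only care needed is bookkeeping — checking that \Cref{KMin}'s induction is anchored at $K_3$ (whose minors have at most two vertices, hence dimension at most $1$) and that the implicit hypothesis $n \geq 3$ is kept. If one were proving the whole package from scratch, the genuinely delicate steps would be the uniqueness-of-embedding input feeding \Cref{KSphere} and the edge-deletion case of \Cref{KMin}; relative to those, \Cref{MainK} is a formality.
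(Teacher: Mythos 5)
Your proposal matches the paper's proof, which simply combines \Cref{KSphere} (equivalently \Cref{KSDim}, giving $\dim K_n = n-1$) with \Cref{KMin} (every proper minor has smaller dimension). Your added remarks about the $n \geq 3$ hypothesis and the base case are sensible bookkeeping but do not change the argument.
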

\begin{proof}
This is a direct consequence of \Cref{KSphere} and \Cref{KMin}.
\end{proof}

Our work with $K_n$ now allows us to prove a few important (albeit, peripheral) facts: 

\begin{thm} \label{exist}
Every graph has both a dimension and a spherical dimension.
\end{thm}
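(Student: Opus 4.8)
The plan is to reduce everything to the complete graph, for which both invariants have already been pinned down. First I would note that a graph $G$ with $k$ vertices is a subgraph of $K_m$ for $m=\max(k,3)$: just identify the vertices of $G$ with $k$ of the vertices of $K_m$ and delete the non-edges. By \Cref{KSphere} (equivalently \Cref{KSDim}), $K_m$ admits an embedding in $\mathbb{R}^{m-1}$ in which every edge has length $1$, the vertices are distinct, and all vertices lie on a single $(m-1)$-dimensional sphere of radius less than $1$.

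Next I would restrict that embedding to $G$: place each vertex of $G$ at the point assigned to its label in $K_m$ and keep only the edges of $G$. The vertices remain distinct, the surviving edges still have length $1$ (they were edges of $K_m$), and all the chosen points still lie on the $(m-1)$-sphere of radius less than $1$. Hence $G$ can be embedded in $\mathbb{R}^{m-1}$ meeting the spherical-embedding requirements, so the set of $n$ admitting such an embedding is nonempty; by well-ordering of $\mathbb{N}$ it has a least element, namely $\sdim G$, and in particular $\sdim G\le m-1<\infty$. Dropping the sphere constraint leaves an ordinary unit-distance embedding of $G$ in $\mathbb{R}^{m-1}$, so the corresponding set for ordinary dimension is also nonempty and $\dim G\le m-1<\infty$ (one could instead just invoke $\dim G\le\sdim G$). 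The degenerate cases — the null graph and a single vertex — are assigned values directly by the conventions fixed when spherical dimension was introduced.

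I do not expect a genuine obstacle; the substance is entirely contained in \Cref{KSphere}, which is already established. The only two points deserving a word of care are (i) that spherical dimension cannot increase when passing to a subgraph — restricting an embedding of a supergraph to a subgraph alters neither the ambient dimension nor the sphere the vertices sit on, so this is immediate — and (ii) the trivial small cases, which are handled by fiat. Beyond that, the argument is just the remark that a nonempty set of natural numbers has a minimum.
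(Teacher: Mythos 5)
Your argument is correct and is essentially the paper's own proof: both embed $G$ as a subgraph of a suitable $K_m$, invoke \Cref{KSphere} to get an embedding (on a sphere of radius less than $1$) in $\mathbb{R}^{m-1}$, restrict to $G$, and take the least admissible dimension. Your version is slightly more explicit about the spherical case and the degenerate small graphs, but there is no substantive difference.
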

\begin{proof}
Let $G$ be a graph with at least two vertices. $G$ is a subgraph of $K_n$ for some $n$. Therefore, $G$ can be embedded in $\mathbb{R}^{n-1}$. There exists some $1 \leq m \leq n-1$ such that $G$ can be embedded in $\mathbb{R}^{m}$ but not $\mathbb{R}^{m-1}$; $\dim G = m$.

That $G$ has a spherical dimension follows identically.
\end{proof}

\begin{prop} \label{addV}
For any graph, $G$, $\sdim(G+vertex) > \sdim G$. 
\end{prop}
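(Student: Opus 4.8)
The plan is to argue by contradiction: suppose $\sdim(G+vertex) = \sdim G = n$. Then $G + vertex$ can be embedded in $\mathbb{R}^n$ with all vertices on an $n$-dimensional sphere of radius less than $1$. Since $vertex$ is a single vertex adjoined to every vertex of $G$, \Cref{spheres} (applied with $H = vertex$ — note we are in the case ``$G+H$ lies on an $n$-dimensional sphere'') tells us that in this embedding $G$ itself lies on some $\circ_{<}^{m}$ with $m \leq n-1$. So in this embedding $G$ sits on an $(n-1)$-dimensional (or lower) sphere of radius $r<1$. First I would pass to the \emph{lowest-dimensional} such sphere for $G$ inside this embedding; call its dimension $m$ and radius $r < 1$, so $m \leq n-1$.

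Next I would invoke \Cref{Main1}: since $G$ is embedded on a unique $\circ_{<}^{m}$ of radius $r<1$ inside $\mathbb{R}^n$, and the adjoined vertex $v$ is a point at distance $1 > r$ from every vertex of $G$, the locus of such points is an $\circ_{<}^{n-m}$ orthogonal to the sphere containing $G$, with radius $\sqrt{1-r^2}$. The key point is that $n - m \geq 1$, so this locus is nonempty and we may indeed place $v$ there; this recovers the embedding we started with. Now apply \Cref{Main2} (or directly \Cref{Main2Cor}): adjoining $v$ produces an embedding of $G + vertex$ lying on a \emph{unique} $(m+1)$-dimensional sphere, and since $m + 1 \leq n$, this shows $G + vertex$ can be realized with its vertices on an $(m+1)$-dimensional sphere. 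But more to the point, this gives an embedding of $G$ alone (inside the span of that $(m+1)$-sphere, which has dimension $m+1 \leq n$) — wait, what we actually want is a contradiction with the definition of $\sdim G$.

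Here is the cleaner route, which I would actually carry out: from the embedding of $G + vertex$ on an $n$-sphere of radius $<1$, \Cref{spheres} gives that $G$ lies on an $\circ_{<}^{m}$ with $m \leq n-1$. But the $m$-dimensional \emph{affine span} of this sphere is a copy of $\mathbb{R}^m$, and $G$'s vertices all lie on an $m$-dimensional sphere of radius $<1$ inside it, with all edges still length $1$. Hence $\sdim G \leq m \leq n-1 < n = \sdim G$, a contradiction. Therefore $\sdim(G+vertex) \neq \sdim G$; combined with the trivial inequality $\sdim(G+vertex) \geq \sdim G$ (any embedding of $G + vertex$ restricts to one of $G$, and the sphere condition is inherited after restricting to the appropriate subspace), we conclude $\sdim(G+vertex) > \sdim G$.

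The main obstacle I anticipate is the bookkeeping in that last step — namely verifying carefully that ``$G$ lies on an $\circ_{<}^{m}$ sitting inside $\mathbb{R}^n$'' legitimately yields ``$G$ has a spherical embedding in $\mathbb{R}^m$,'' i.e. that one may restrict to the affine span of the small sphere without disturbing edge lengths or the distinctness of vertices, and that the inequality $\sdim(G+vertex) \geq \sdim G$ is handled with the same care (the naive restriction of an embedding of $G+vertex$ need not a priori place $G$ on a sphere, so one should instead observe $\sdim$ is monotone under subgraphs, which itself requires the same affine-span argument). I should also double-check the edge case $|G| \leq 1$ separately, since \Cref{spheres} is stated for $|G| > 0$ and the ``$G$ lies on a point'' degeneracies need the hypothesis $|G|>1$ to be excluded in the main argument; for $|G|\le 1$ the claim $\sdim(G+vertex)>\sdim G$ is immediate from the conventions ($\sdim$ of a vertex is $0$, of $K_2$ is $1$, etc.).
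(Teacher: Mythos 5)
Your proof is correct and takes essentially the same route as the paper, whose one-line proof likewise rests on \Cref{spheres} applied in the case where the embedding of $G+vertex$ lies on a sphere of radius less than $1$: your ``cleaner route'' paragraph, which extracts $\sdim G \leq m \leq n-1 < n$ via the affine-span observation, is exactly the intended argument, and the edge cases $|G|\le 1$ are handled correctly by convention. The detour through \Cref{Main1} and \Cref{Main2} in your middle paragraph is unnecessary, and note that the cleaner route already gives the strict inequality directly (since $\sdim G \le n-1$ where $n=\sdim(G+vertex)$), so neither the contradiction framing nor the auxiliary monotonicity claim is actually needed.
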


\begin{proof}
This is a direct consequence of \Cref{spheres} and \Cref{Main2}.
\end{proof}

It is tempting to also claim that $\dim(G+vertex)> \dim G$ or $\dim(G+vertex) > \sdim G$, but this is not generally true. For example, we have already seen that $\dim C_6 = 2$ and $\sdim C_6 = 3$, but $C_6 + vertex = W_6$ has dimension 2.

Another interesting question is whether there exists a graph, $G$, such that $\sdim(G + vertex) > \sdim G + 1$. This is an open question. However, notice that it would suffice to find a $G$ that cannot be embedded on an $\sphere[\sdim G]$ of radius less than or equal to $\frac{\sqrt{2}}{2}$ (to see this, iteratively apply \Cref{Main2} to $G$). This question is briefly revisited at the beginning of Section 7.


\section{Finding Graphs Minor Minimal with Respect to Spherical Dimension: $S_n$}

We have found a class of graphs that is minor minimal with respect to dimension. Now let us find a class of graphs that is minor minimal with respect to spherical dimension. In particular, we will find the graphs with the fewest number of vertices that are minor minimal with respect to spherical dimension $n-1$.

 Since $\sdim K_{n-1} = n-2$ and $\sdim K_{n} = n-1$, we know such a graph must have $n$ vertices. Let us give these graphs a name: Let $\mathbb{S}_n$ denote the set of graphs that are minor minimal with respect to spherical dimension $n-1$ and have $n$ vertices, and let $S_n$ be some graph $S_n \in \mathbb{S}_n$. Computing $\mathbb{S}_n$ is relatively easy for small $n$:

\begin{prop} \label{sBase}
The following are $\mathbb{S}_n$ for small $n$:

\textbf{a.} $\mathbb{S}_1 = \{ \epsilon_1 \}$.

\textbf{b.} $\mathbb{S}_2 = \{ \epsilon_2 \}$.

\textbf{c.} $\mathbb{S}_3 = \{ \epsilon_3 \}$.

\textbf{d.} $\mathbb{S}_4 = \{ K_{3,1} \}$ (equivalently, $\{ K_1 + \epsilon_3\}$).

Indeed, $\epsilon_1$, $\epsilon_2$, and $\epsilon_3$ are the only minor minimal graphs with respect to their respective spherical dimensions.
\end{prop}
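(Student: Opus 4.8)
# Proof Proposal for Proposition \ref{sBase}

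The plan is to handle parts (a)--(d) mostly by direct computation, leveraging the structural results already established, and then dispatch the final sentence by a short case-analysis argument. For parts (a), (b), (c): since $\sdim \epsilon_1 = 0$, and two points always lie on a $\sphere[1]$ of any positive radius, we get $\sdim \epsilon_2 = 1$; similarly three points (not necessarily collinear, and in fact we can force them onto a small circle) give $\sdim \epsilon_3 = 2$. Minor minimality is automatic here because the only proper minors of $\epsilon_k$ are $\epsilon_j$ with $j < k$, and these have strictly smaller spherical dimension. The one thing requiring a sentence of justification is that $\epsilon_k$ for $k \le 3$ really does achieve spherical dimension $k-1$ and not less: $\epsilon_k$ cannot be embedded on an $\sphere[k-2]$ because we need $k$ distinct points and an $\sphere[0]$ is a single point, an $\sphere[1]$ is two points — so in fact $\epsilon_k$ is \emph{forced} up to dimension $k-1$ precisely when $k \le 3$ (an $\sphere[2]$, a circle, already holds arbitrarily many points, which is why the pattern breaks at $k=4$).

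For part (d), I would first show $S_4 = K_1 + \epsilon_3 \in \mathbb{S}_4$. By \Cref{addV} (or directly \Cref{Main2}), $\sdim(K_1 + \epsilon_3) > \sdim \epsilon_3 = 2$, so $\sdim(K_1+\epsilon_3) \ge 3$; and since $K_1+\epsilon_3$ is a subgraph of $K_4$, \Cref{KSDim} gives $\sdim(K_1+\epsilon_3) \le 3$. Hence its spherical dimension is exactly $3$. It has $4$ vertices, so it remains to check minor minimality: every proper minor of $K_1 + \epsilon_3$ has at most $3$ vertices, and any graph on at most $3$ vertices is a subgraph of $K_3$, which has spherical dimension $2 < 3$. (Strictly, I should confirm that contracting an edge of $K_1+\epsilon_3$ or deleting a vertex genuinely yields a $\le 3$-vertex graph — it does — so minor minimality is immediate.) Then I must argue $\mathbb{S}_4 = \{K_{3,1}\}$ exactly, i.e. no \emph{other} $4$-vertex graph is minor minimal with respect to spherical dimension $3$. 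This is where the real work is: I would enumerate the connected possibilities for a $4$-vertex graph $G$ with $\sdim G = 3$. Any such $G$ must actually require all of $\mathbb{R}^3$; I'd show that $K_4$ itself is not minor minimal for spherical dimension $3$ (since $K_4 \succ K_1 + \epsilon_3$, a proper minor with the same spherical dimension $3$), that $C_4$ and $K_4 - e$ and $K_{3,1}$'s other competitors either have smaller spherical dimension or contain $K_{3,1}$ as a proper minor, leaving $K_{3,1}$ as the unique survivor.

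For the final sentence — that $\epsilon_1, \epsilon_2, \epsilon_3$ are the \emph{only} minor minimal graphs with respect to their respective spherical dimensions (over all graphs, not just the vertex-minimal ones) — I would argue as follows. Suppose $G$ is minor minimal with respect to spherical dimension $d$ and $G \ne \epsilon_1, \epsilon_2, \epsilon_3$. If $d \le 2$: spherical dimension $0$ forces $G$ to be a single vertex ($\epsilon_1$); spherical dimension $1$ means $G$ embeds on two points, so $G \subseteq \epsilon_2$, forcing $G = \epsilon_2$; spherical dimension $2$ means $G$ embeds on a circle of radius $< 1$, and I claim the only such minor-minimal graph is $\epsilon_3$ — any graph embeddable on a small circle with an edge present would have that edge-endpoints' minor ($\epsilon_2$ or a smaller graph) strictly cheaper, and removing/contracting reduces things, so minimality forces no edges and exactly the point-count that is forced, namely $3$. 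If $d \ge 3$: then $G$ has at least $4$ vertices (anything on $\le 3$ vertices has spherical dimension $\le 2$), and I would show $G$ has $K_1 + \epsilon_3$ or some smaller-dimensional-but-still-forcing minor as a proper minor — more carefully, since $G$ requires spherical dimension $\ge 3$, deleting a vertex drops us to $\ge 3$ vertices but that could still need dimension $3$... so the cleaner route is: among graphs minor minimal for spherical dimension $d \ge 3$ there must be one on exactly $|G|$ vertices, and by the vertex-count discussion leading into \Cref{sBase}, such graphs for $d = 3$ must have $4$ vertices and equal $K_{3,1}$, which is \emph{not} one of $\epsilon_1,\epsilon_2,\epsilon_3$ — hence the statement is really about dimensions $\le 2$ only. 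I expect the main obstacle to be making the $d \ge 3$ half of the final sentence precise without circularity; the resolution is to read "their respective spherical dimensions" as ranging over $d \in \{0,1,2\}$ and prove uniqueness there by the edge-elimination argument above, which is the cleanest reading consistent with the vertex-minimality framing of the proposition.
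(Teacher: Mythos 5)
Your treatment of parts (a)--(c) and of the final sentence matches the paper's argument in substance, and your dichotomy for the uniqueness half of (d) (every other four-vertex graph either has spherical dimension at most $2$ or properly contains $K_{3,1}$, hence fails minimality) is the same one the paper uses via \Cref{4vSubgraphs}. The genuine gap is in your minimality argument for $K_{3,1}$ itself. You claim that every proper minor of $K_1+\epsilon_3$ has at most three vertices and conclude minimality is ``immediate,'' and your parenthetical check covers only vertex deletion and edge contraction --- precisely the two operations that do reduce the vertex count. Edge deletion is also a minor operation, and it does not: removing one edge from $K_{3,1}$ yields $P_3 \cupdot vertex$, a proper minor on four vertices, to which your ``subgraph of $K_3$'' argument does not apply. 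This is not a vacuous case; it is the half of the paper's part (d) that requires an actual embedding. The paper disposes of it by noting that every minor of $K_{3,1}$ obtained by removing edges alone is a subgraph of $P_3 \cupdot vertex$, which embeds on a circle of radius at most $\frac{\sqrt{2}}{2}$ and so has spherical dimension $2 < 3$. Without this case your proof that $K_{3,1} \in \mathbb{S}_4$ is incomplete.

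A secondary point: the ``smaller spherical dimension'' branch of your uniqueness argument for $\mathbb{S}_4$ is not free either. Showing that a four-vertex graph not containing $K_{3,1}$ has spherical dimension at most $2$ requires exhibiting circle embeddings of $C_4$ (as a square, radius $\frac{\sqrt{2}}{2}$), $K_3 \cupdot vertex$, $P_4$, and the remaining edge-deletion minors; the paper packages exactly this enumeration as \Cref{4vSubgraphs}, and that lemma is reused later (in \Cref{4verts} and \Cref{Sn}) with the quantitative bound on the radius, not just the dimension count. Your sketch acknowledges the enumeration but should carry it out with the radius bound if it is to serve the same role downstream.
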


\begin{proof}
\textbf{a.} Recall that $\sdim \epsilon_0 = - \infty$ and $\sdim \epsilon_1 = 0$. Since the only minor of $\epsilon_1$ is the empty graph, $\epsilon_1$ is minor minimal with respect to spherical dimension 0. Moreover, since every graph that is not the empty graph contains $\epsilon_1$ as a subgraph, $\epsilon_1$ is the only minor minimal graph with spherical dimension 0.
\\
\\
\textbf{b.} Since $\epsilon_2$ can be embedded on two points but not a single point, $\sdim \epsilon_2 = 1$. Since all of $\epsilon_2$'s minors are subgraphs of $\epsilon_1$, which we just saw has spherical dimension 0, $\epsilon_2$ is minor minimal with respect to spherical dimension 1. Moreover, any graph that is not the empty graph or $\epsilon_1$ contains $\epsilon_2$ as a subgraph, so $\epsilon_2$ is the only minor minimal graph with respect to spherical dimension 1.
\\
\\
\textbf{c.} Since $\epsilon_3$ can be embedded on a circle but not two points, $\sdim \epsilon_3 = 2$. Since all of $\epsilon_3$'s minors are subgraphs of $\epsilon_2$, which we just saw has spherical dimension 1, $\epsilon_3$ is minor minimal with respect to spherical dimension 2. All graph with two or fewer vertices, have spherical dimension at most 1, so since all graph with three or more vertices contain $\epsilon_3$ as a subgraph, $\epsilon_3$ is the only minor minimal graph with respect to spherical dimension 2. 
\\
\\
\textbf{d.} Since $\sdim \epsilon_3 = 2$ and can be embedded on circles of arbitrarily small radius, \Cref{Main2} yields that $\sdim K_1 + \epsilon_3 = 3$. We now show that $\sdim H < 3$ for all $H \prec K_{3,1}$ by considering two exhaustive cases:
\begin{description}[style=unboxed]
    \item[$H \prec K_{3,1}$ is obtained by removing at least one vertex or contracting at least one edge] Then $H \subgraph K_3$, and so $\sdim H \leq 2$. 
    \item[$H$ is obtained by only removing edges] Then $H \subgraph P_3 \cupdot vertex$ (where $P_3$ is the path on 3 vertices). Since $\sdim P_3 \cupdot vertex =2$, $\sdim H \leq 2$.
\end{description}
Since every minor of $K_{3,1}$ falls into one of the two above categories, every minor of $K_1 + \epsilon_3$ has spherical dimension at most 2, and so $K_{3,1}$ is minor minimal with respect to spherical dimension 3. 

Finally, we show that $K_{3,1}$ is the only member of $\mathbb{S}_4$. To do this, we must show that every subgraph of $K_4$ that is not a supergraph of $K_{3,1}$ has spherical dimension at most 2. The following more general lemma suffices:
\end{proof}

\begin{lem} \label{4vSubgraphs}
Let $K_{3,1} \subgraph G \subgraph K_4$. Every proper minor of $G$ that is not a supergraph of $K_{3,1}$ can be embedded on a circle of radius less than or equal to $\frac{\sqrt{2}}{2}$. 
\end{lem}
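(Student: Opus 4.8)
The plan is a two-case analysis on the number of vertices of the minor, using that $K_{3,1} \subgraph G \subgraph K_4$ forces $|G| = 4$. Fix such a $G$ and let $H$ be a proper minor of $G$ that does not contain $K_{3,1}$ as a subgraph. Since removing a vertex and contracting an edge each strictly decrease the vertex count, there are only two possibilities: either $|H| \leq 3$, or $|H| = 4$ and $H$ is obtained from $G$ by deleting edges only, so that in the second case $H$ is a spanning subgraph of $G$ and hence a $4$-vertex subgraph of $K_4$.

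If $|H| \leq 3$, I would simply note that every graph on at most three vertices is isomorphic to a subgraph of $K_3$, and that $K_3$ has an embedding as a unit equilateral triangle whose three vertices lie on a circle of radius $\frac{1}{\sqrt 3} < \frac{\sqrt 2}{2}$ (this is the $n = 3$ instance of \Cref{KSphere}). Deleting from that embedding the vertices and edges absent from $H$ yields the required embedding of $H$. Here the hypothesis that $H$ is not a supergraph of $K_{3,1}$ is vacuous, since $K_{3,1}$ already has four vertices.

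If $|H| = 4$, the key observation is that $H$ failing to contain $K_{3,1}$ forces $\Delta(H) \leq 2$, because a vertex of degree $3$ in a $4$-vertex graph is the center of a spanning $K_{3,1}$. Hence $H$ is a disjoint union of paths and cycles on four vertices, and a short run through the possible degree sequences shows that every such $H$ is a spanning subgraph of either $C_4$ or $C_3 \cupdot \epsilon_1$. It then remains to embed these two graphs on circles of radius at most $\frac{\sqrt 2}{2}$: take $C_4$ as a unit square, whose vertices lie on a circle of radius exactly $\frac{\sqrt 2}{2}$, and take $C_3 \cupdot \epsilon_1$ as a unit equilateral triangle together with one further point on its circumcircle of radius $\frac{1}{\sqrt 3}$. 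Restricting each embedding to the vertices and edges of $H$ finishes this case, and the lemma.

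I do not expect a genuine obstacle; the only real work is the degree-sequence enumeration in the second case, which is routine. The one point that needs care is that the inequality must be stated non-strictly: the unit square is essentially the only way to put $C_4$ on a circle, and its circumradius is exactly $\frac{\sqrt 2}{2}$, so $\leq$ cannot be sharpened to $<$. It is also worth being explicit that ``supergraph of $K_{3,1}$'' is read as ``contains $K_{3,1}$ as a subgraph,'' which for $4$-vertex graphs is precisely the condition of possessing a vertex of degree $3$.
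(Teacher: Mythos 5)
Your proof is correct, and it ultimately rests on the same two critical embeddings as the paper ($C_4$ as a unit square on a circle of radius exactly $\frac{\sqrt{2}}{2}$, and triangle-based graphs on the circumcircle of radius $\frac{1}{\sqrt{3}}$), but it gets there by a genuinely different decomposition. The paper lists the four graphs $G$ with $K_{3,1} \subgraph G \subgraph K_4$ and walks down the edge-deletion lattice from each one, checking ``up to symmetry'' every single-edge removal and deferring any result that still contains $K_{3,1}$ to a later case; minors on at most three vertices are dismissed at the outset as subgraphs of $K_3$. You replace that enumeration with the structural observation that, for a four-vertex graph, failing to contain $K_{3,1}$ is equivalent to $\Delta(H) \leq 2$, so $H$ is a disjoint union of paths and cycles and hence a spanning subgraph of $C_4$ or of $C_3 \cupdot \epsilon_1$. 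This buys a cleaner and more visibly exhaustive argument --- the paper's symmetry bookkeeping, and its implicit reliance on the fact that minors obtained by several edge deletions are subgraphs of the ones it explicitly checks, are absorbed into a single degree bound --- at the cost of the small extra verification that every $\Delta \leq 2$ graph on four vertices sits inside one of those two maximal graphs, which is indeed routine. Your closing remarks are also accurate: for four-vertex minors ``supergraph of $K_{3,1}$'' is exactly ``has a vertex of degree $3$,'' and the bound must be stated with $\leq$ rather than $<$, since the unit square is the only unit-edge embedding of $C_4$ on a circle and its circumradius is exactly $\frac{\sqrt{2}}{2}$.
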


\begin{proof}
We need to consider the minors of the four graphs listed in \Cref{4v3d}. 
\begin{figure}[!ht]
\centering
\includegraphics[width=3cm]{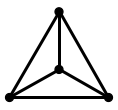} 
\includegraphics[width=3cm]{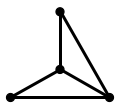}
\includegraphics[width=3cm]{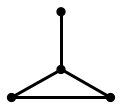}
\includegraphics[width=3cm]{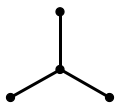}
\caption{The four graphs that are both subgraphs of $K_4$ and supergraphs of $K_{3,1}$.}
\label{4v3d}
\end{figure} 
Since $K_3$ can be embedded on a circle of radius less than $\frac{\sqrt{2}}{2}$, it suffices to consider only the minors obtained by removing one edge from any of these four graphs.
\begin{description}
    \item[Minors of $K_4$] Up to symmetry, every subgraph of $K_4$ obtained by removing a single edge is $K_4 - edge$, which is a supergraph of $K_{3,1}$ and so considered below. 
    \item[Minors of $K_4 - edge$] Up to symmetry, one can obtain two subgraphs of $K_4 - edge$ by removing a single edge: $C_4$ and $K_{3,1}+edge$. 
    \begin{description}
        \item[$C_4$] Embed $C_4$ as a square. All the vertices of $C_4$ are a distance $\frac{\sqrt{2}}{2}$ away from the center of the square, so $C_4$ can be embedded on a $\sphere[2]$ of radius $\frac{\sqrt{2}}{2}$.
	\item[$K_{3,1} + edge$] $K_{3,1}+edge$ is a supergraph of $K_{3,1}$ and so is considered below.
    \end{description}
    \item[Minors of $K_{3,1} + edge$] Up to symmetery, one can obtain three different subgraphs of $K_{3,1} + edge$ by removing a single edge: $P_4$, $K_3 \cupdot vertex$, and $K_{3,1}$.
    \begin{description}
        \item[$P_4$] $P_4 \subgraph C_4$, which we just saw can be embedded on a $\sphere[2]$ of radius $\frac{\sqrt{2}}{2}$.
        \item[$K_3 \cupdot vertex$] By \Cref{KSphere}, $K_3$ can be embedded on a $\sphere[2]$ of radius less than $\frac{\sqrt{2}}{2}$. Embed $K_3$ on such a $\sphere[2]$ and add another vertex anywhere on the $\sphere[2]$ that does not already have a vertex to get an embedding of $K_3 \cupdot vertex$ on a $\sphere[2]$ of radius less than $\frac{\sqrt{2}}{2}$.
	\item[$K_{3,1}$] $K_{3,1}$ is, of course, a subgraph of itself, and so we consider it below.
    \end{description}
   \item[Minors of $K_{3,1}$] Up to symmetry, the only subgraph is $P_3 \cupdot vertex$, which is a subgraph of many of the graphs already considered and so can be embedded on a $\sphere[2]$ of radius less than $\frac{\sqrt{2}}{2}$. 
\end{description}
We have considered all possible minors obtained by removing one edge from one of the desired graphs, and so we are done.
\end{proof}

To prove the general claim $\mathbb{S}_n = \{ K_{n-3} + \epsilon_3\}$ for $n \geq 4$, we need a few more lemmas. For the next lemma, we need the following definition. Let 
\[
R_G = \inf_{S \in \left\{\substack{\text{$\sphere[\sdim G]$s on which } \\ \text{ $G$ can be embedded}}\right\}} \text{radius of $S$}
\]

\begin{lem} \label{4verts}
Let $G$ be a graph such that $\sdim G = 2$ and $|G|= 4$. Then $R_G \leq \frac{\sqrt{2}}{2}$
\end{lem}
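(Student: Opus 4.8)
The plan is to reduce the claim to a short finite check. First I would note that spherical dimension is monotone under subgraphs: if $H\subgraph G$, then any embedding of $G$ on an $\sphere[n]$ restricts to an embedding of $H$ on that same $\sphere[n]$, so $\sdim H\leq\sdim G$. Combined with \Cref{sBase}(d), which gives $\sdim K_{3,1}=3$, this shows that a graph of spherical dimension $2$ cannot contain $K_{3,1}$ as a subgraph. Since our $G$ has $|G|=4$, containing $K_{3,1}$ would be the same as having a vertex of degree $3$; hence $\Delta(G)\leq 2$, and so $G$ is a disjoint union of paths and cycles. Up to isomorphism the only such graphs on four vertices are $\epsilon_4$, $K_2\cupdot\epsilon_2$, $K_2\cupdot K_2$, $P_3\cupdot\epsilon_1$, $P_4$, $K_3\cupdot\epsilon_1$, and $C_4$.

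Next I would observe that every graph on this list is a spanning subgraph of either $C_4$ or $K_3\cupdot\epsilon_1$: each of them other than $K_3\cupdot\epsilon_1$ is obtained from the $4$-cycle by deleting a suitable set of edges, and $K_3\cupdot\epsilon_1$ appears on the list itself. Deleting edges from an embedded graph leaves an embedding of the smaller graph on the very same circle (the vertex positions, and hence their pairwise distances, are unchanged), so it suffices to embed $C_4$ and $K_3\cupdot\epsilon_1$ on circles of radius at most $\frac{\sqrt{2}}{2}$.

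This last step is immediate. The graph $C_4$ embeds as a unit square, whose four vertices lie on a circle of radius exactly $\frac{\sqrt{2}}{2}$. And by \Cref{KSphere}, $K_3$ embeds on a circle of radius strictly less than $\frac{\sqrt{2}}{2}$; adding the isolated vertex of $K_3\cupdot\epsilon_1$ at any unoccupied point of that circle gives the required embedding. In either case $G$ embeds on a circle of radius at most $\frac{\sqrt{2}}{2}$, so $R_G\leq\frac{\sqrt{2}}{2}$.

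The only part calling for care is the classification in the first paragraph: confirming the list of $4$-vertex graphs with $\Delta(G)\leq 2$ is complete, and that each really embeds into $C_4$ or $K_3\cupdot\epsilon_1$. This is routine, and I do not anticipate a genuine obstacle — the real content of the lemma is simply the observation that removing the single ``bad'' graph $K_{3,1}$ from the $4$-vertex graphs of spherical dimension $2$ leaves only graphs that fit on these two easily embedded hosts.
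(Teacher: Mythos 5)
Your proof is correct and follows essentially the same route as the paper: both rest on the dichotomy that a four-vertex graph of spherical dimension $2$ cannot contain $K_{3,1}$ (via \Cref{sBase}d), followed by a finite check that the remaining four-vertex graphs embed on circles of radius at most $\frac{\sqrt{2}}{2}$, using exactly the same two key embeddings ($C_4$ as a unit square and $K_3$ on a circle of radius less than $\frac{\sqrt{2}}{2}$ with an isolated vertex added). The only difference is organizational: the paper delegates the finite check to \Cref{4vSubgraphs}, whereas you inline it via the observation that excluding $K_{3,1}$ forces maximum degree at most $2$, reducing everything to spanning subgraphs of the two hosts $C_4$ and $K_3\cupdot\epsilon_1$.
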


\begin{proof}
If $G$ is a supergraph of $K_{3,1}$, it has spherical dimension greater than 2 (\Cref{sBase}d). Otherwise, $G$ can be embedded on a circle of radius less than or equal to $\frac{\sqrt{2}}{2}$ (\Cref{4vSubgraphs}). This completes the proof.
\end{proof}

\begin{lem} \label{K+E3 sdim}
For $n \geq 4$, $\sdim (K_{n-3} + \epsilon_3) = n-1$.
\end{lem}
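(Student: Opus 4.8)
The plan is to prove the statement by induction on $n$, strengthening the inductive hypothesis to also record a radius bound: for every $n \geq 4$, the graph $K_{n-3}+\epsilon_3$ has spherical dimension $n-1$ \emph{and} can be embedded on an $\sphere[n-1]$ of radius strictly less than $\frac{\sqrt{2}}{2}$. Carrying the radius bound along is exactly what keeps \Cref{Main2Cor}(a) available at every step of the construction; without it the inductive step stalls.

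The base case $n=4$ concerns $K_{3,1} = K_1 + \epsilon_3$, whose spherical dimension is $3$ by \Cref{sBase}(d). For the refined statement, embed $\epsilon_3$ in $\mathbb{R}^3$ as a small equilateral triangle; its three vertices are distinct and non-collinear, so they lie on a unique circle — an $\sphere[2]$ — whose radius we may take below $\frac{\sqrt{2}}{2}$, and this circle is the unique lowest-dimensional sphere through them. Adjoining a vertex at distance $1$ as in \Cref{Main2} and applying \Cref{Main2Cor}(a) yields an embedding of $K_{3,1}$ on an $\sphere[3]$ whose radius is still below $\frac{\sqrt{2}}{2}$.

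For the inductive step take $n \geq 5$ and observe that $K_{n-3}+\epsilon_3 = (K_{n-4}+\epsilon_3) + vertex$, since $K_{n-3} = K_{n-4} + K_1$. The lower bound is then immediate from \Cref{addV}: $\sdim(K_{n-3}+\epsilon_3) > \sdim(K_{n-4}+\epsilon_3) = (n-1)-1 = n-2$, so $\sdim(K_{n-3}+\epsilon_3) \geq n-1$, where the value $n-2$ comes from the inductive hypothesis applied at $n-1 \geq 4$. For the matching upper bound, realize the inductive-hypothesis embedding of $K_{n-4}+\epsilon_3$ inside a copy of $\mathbb{R}^{n-1}$ — so that at least one ambient coordinate is unused — where it sits on a unique $\sphere[n-2]$ of radius below $\frac{\sqrt{2}}{2}$. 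By \Cref{Main1} there is a point at distance $1$ from all of its vertices; adjoining a vertex there, \Cref{Main2} shows the result is $K_{n-3}+\epsilon_3$ lying on a unique $\sphere[n-1]$, and \Cref{Main2Cor}(a) shows this sphere has radius below $\frac{\sqrt{2}}{2} < 1$. Hence $\sdim(K_{n-3}+\epsilon_3) \leq n-1$, and combining with the lower bound completes the induction.

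The only delicate points are dimension bookkeeping — ensuring an unused coordinate is always available before invoking \Cref{Main1} or \Cref{Main2}, which is handled by working throughout in $\mathbb{R}^{n-1}$ — and checking that the starting circle carrying $\epsilon_3$ is genuinely its unique minimal sphere, which holds because three distinct non-collinear points lie on exactly one circle and on no smaller sphere. The uniqueness hypothesis required by \Cref{Main2} then propagates automatically, since each application of \Cref{Main2} certifies uniqueness of the sphere it produces; so I do not anticipate any serious obstacle beyond this careful tracking of dimensions and radii.
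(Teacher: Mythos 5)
Your proof is correct, and the hard direction coincides with the paper's: both establish the lower bound by writing $K_{n-3}+\epsilon_3 = (K_{n-4}+\epsilon_3)+vertex$ and applying \Cref{addV} inductively from the base case \Cref{sBase}(d). Where you diverge is the upper bound. The paper disposes of it in one line by observing $K_{n-3}+\epsilon_3 \subseteq K_n$ and invoking $\sdim K_n = n-1$ (\Cref{KSDim}), since any spherical embedding of $K_n$ restricts to one of its subgraphs. You instead build the embedding from scratch, carrying a radius bound of $\frac{\sqrt{2}}{2}$ through the induction via \Cref{Main1}, \Cref{Main2}, and \Cref{Main2Cor}(a), and propagating the uniqueness of the minimal sphere needed to re-apply \Cref{Main2} at each step (your observation that the adjoined vertex lies off the previous sphere's plane, so the new $(n-1)$-dimensional sphere is again the unique minimal one, is the right justification). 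This costs more work than the lemma requires, but it is not wasted: the strengthened statement you prove — that $K_{n-3}+\epsilon_3 = S_n$ embeds on an $\sphere[n-1]$ of radius strictly below $\frac{\sqrt{2}}{2}$ — is essentially the content the paper must establish separately in \Cref{radii} (together with \Cref{Main2Cor}) before it can prove \Cref{Sne3}. So your route trades the quick subgraph argument for a constructive one that front-loads a result the paper needs later anyway.
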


\begin{proof}
Since $K_{n-3} + \epsilon_3 \subgraph K_n$, $\sdim (K_{n-3} + \epsilon_3) \leq n-1$. To get the reverse inequality, we proceed by induction. \Cref{sBase}d provides the base case. Suppose that $\sdim (K_{n-3}+\epsilon_3) \geq n-1$. Since $K_{n-2}+\epsilon_3 = (K_{n-3} + \epsilon_3) + vertex$, \Cref{addV} yields that $\sdim (K_{n-2}+\epsilon_3)\geq n$, completing the proof.
\end{proof}

The next lemma and proof uses the following two definitions:
\begin{itemize}
	\item A vertex of $G$ is called a \emph{center vertex} if it is adjacent to every other vertex of $G$.
	\item $G +_? H$ denotes the set of all graphs that can be constructed from $G \cupdot H$ by adding some edges (possibly none or all) between $G$ and $H$.
\end{itemize}

\begin{lem} \label{comb}
Let $G$ be a graph with $n \geq 4$ vertices. Suppose that $H$ is a subgraph of $K_{3,1}$ for every subgraph, $H$, of $G$ obtained by only removing vertices and with $|H| = 4$ Then $K_{n-3} + \epsilon_3 \subseteq G$. 
\end{lem}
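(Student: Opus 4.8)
The plan is to translate the statement into a purely combinatorial fact about \emph{center vertices} (a vertex adjacent to all of the others, as defined just above the lemma). First I would record the easy equivalence: $K_{n-3}+\epsilon_3 \subseteq G$ if and only if $G$ has at least $n-3$ center vertices. Indeed, given $n-3$ center vertices forming a set $A$ and the remaining three vertices $B$, the set $A$ induces $K_{n-3}$ and every $A$--$B$ edge is present, so $G$ contains $K_{n-3}+\epsilon_3$; conversely, a vertex lying in the $K_{n-3}$-part of such a copy is joined to all $n-1$ other vertices, hence is central. In the same spirit, the hypothesis says exactly that every $4$-element set $S \subseteq V(G)$ contains a vertex adjacent (in $G$) to the other three members of $S$. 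So the lemma reduces to: if every $4$-subset of $V(G)$ contains such a vertex, then $G$ has at most three non-central vertices.

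I would prove the contrapositive. Suppose $G$ has a set $U$ of at least four non-central vertices. The crucial (and only substantive) point is that non-adjacency stays inside $U$: if $u \in U$ and $w$ is a non-neighbor of $u$, then $w$ is non-adjacent to $u$, hence $w$ is itself non-central and $w \in U$. Consequently, if we let $F$ be the graph on vertex set $U$ whose edges are the non-adjacent pairs of $G$, then $F$ has minimum degree at least $1$ and $|V(F)| = |U| \ge 4$.

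The heart of the argument is then the elementary claim that a graph $F$ with at least four vertices and minimum degree at least $1$ has a $4$-vertex subset whose induced subgraph still has minimum degree at least $1$. I would prove this by fixing an edge $ab$ of $F$ and splitting into two cases. If $F$ has an edge $cd$ disjoint from $\{a,b\}$, then $\{a,b,c,d\}$ already induces a subgraph containing the two edges $ab$ and $cd$, so each of its four vertices has a neighbor inside. Otherwise every edge of $F$ meets $\{a,b\}$; choose two further distinct vertices $c,d \in V(F) \setminus \{a,b\}$ (possible since $|V(F)| \ge 4$), and since each of $c,d$ has an incident edge, each is adjacent to $a$ or to $b$, while $a$ and $b$ are adjacent to each other, so again $\{a,b,c,d\}$ works. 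Applying this claim to $F$ produces a $4$-subset $S \subseteq U \subseteq V(G)$ in which every vertex has a non-neighbor inside $S$, i.e.\ no vertex of $S$ is adjacent to the other three, contradicting the hypothesis. Therefore $|U| \le 3$, so $G$ has at least $n-3$ center vertices, and $K_{n-3}+\epsilon_3 \subseteq G$.

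I expect the main obstacle to be nothing more than getting the small claim in the last paragraph right — in particular handling the degenerate configuration in which every edge of $F$ shares an endpoint with a single fixed edge (as in a star), so that no pair of disjoint edges is available and one must instead use that the two \emph{spare} vertices are forced to attach to $\{a,b\}$. A conceivable alternative is induction on $n$: delete a vertex $v$, apply the inductive hypothesis to $G - v$, and argue about how many center vertices of $G-v$ survive as center vertices once $v$ is restored; but since restoring $v$ can destroy center vertices, this approach needs bookkeeping over several deletions and is less clean than the direct argument above.
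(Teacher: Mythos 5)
Your argument is correct, including the small extremal claim at its heart (every graph on at least four vertices with minimum degree at least $1$ contains a $4$-vertex subset inducing a subgraph of minimum degree at least $1$); the two cases there are exhaustive and each is handled properly. The route is genuinely different from the paper's. The paper argues directly and constructively: assuming $G \neq K_n$, it fixes one non-adjacent pair $v_1,v_2$, applies the hypothesis to the $4$-sets $\{v_1,v_2,u_1,u_2\}$ to conclude that all vertices outside $\{v_1,v_2\}$ are pairwise adjacent, and then splits into two cases according to whether every such vertex is joined to both $v_1$ and $v_2$ (giving $K_{n-2}+\epsilon_2 \supseteq K_{n-3}+\epsilon_3$) or some $w$ misses one of them, in which case a second application of the hypothesis to the sets $\{v_1,v_2,w,u\}$ forces every remaining $u$ to be joined to all of $v_1,v_2,w$, exhibiting the $\epsilon_3$ explicitly on $\{v_1,v_2,w\}$. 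You instead reformulate both hypothesis and conclusion in terms of center vertices, pass to the complement restricted to the non-central vertices, and reach a contradiction from the minimum-degree fact. The paper's version has the advantage of physically producing the copy of $K_{n-3}+\epsilon_3$ that is consumed in \Cref{Sn}; yours isolates the combinatorial core as a clean, reusable statement and avoids the structural case analysis. Two minor points: you should state explicitly that $U$ is the set of \emph{all} non-central vertices, since the step ``$w$ is non-central, hence $w\in U$'' requires this; and note that, like the paper's own proof, you are (correctly) reading the hypothesis as $K_{3,1}\subseteq H$ rather than the literal $H\subseteq K_{3,1}$ printed in the statement.
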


\begin{proof}
If $G = K_n$, we are done, so suppose $G \neq K_n$. Then there exists non-adjacent vertices $v_1$ and $v_2$. Let $H$ the graph obtained by removing all but the following four vertices from $G$: $v_1$, $v_2$, and some arbitrary $u_1$ and $u_2$.  Since $K_{3,1}$ is a subgraph of $H$, $H$ has a center vertex. Since $v_1$ and $v_2$ are not adjacent to one another, neither can be center vertices of $H$, so either $u_1$ or $u_2$ is a center vertex of $H$. Regardless, $u_1$ and $u_2$ are adjacent. 

Since $u_1$ and $u_2$ were arbitrary vertices of $G$, not $v_1$ or $v_2$, any pair of vertices of $G$ that do not include $v_1$ or $v_2$ are adjacent. In other words, $G \in K_{n-2} +_? \epsilon_2$, where $\epsilon_2$'s two vertices are $v_1$ and $v_2$. To determine which edges exist between $K_{n-2}$ and $\epsilon_2$, we consider two exhaustive cases:
\begin{itemize}
    \item \textbf{Suppose every vertex of $K_{n-2}$ is adjacent to both $v_1$ and $v_2$.} Then $G = K_{n-2} + \epsilon_2$, which is a supergraph of $K_{n-3} + \epsilon_3$, and we are done.
    \item \textbf{Suppose there exists at least one vertex, $w$, of $K_{n-2}$ that is not adjacent to at least one of $v_1$ or $v_2$.} Let $H$ be the graph obtained by removing all but the following four vertices from $G$: $v_1$, $v_2$, $w$, and some arbitrary $u$. Since $v_1$ and $v_2$ are not adjacent, and $w$ is not adjacent to at least one of $v_1$ and $v_2$, $u$ must be the center vertex of $H$. In other words, $u$ is adjacent to $v_1$, $v_2$, and $w$. Since $u$ was arbitrary, every vertex of $G$ not $v_1$, $v_2$, or $w$ is adjacent to each of $v_1$, $v_2$, and $w$. Therefore, $G$ is a supergraph of $K_{n-3} + \epsilon_3$, where the $\epsilon_3$'s vertices are $v_1, v_2, w$.
\end{itemize}
Either way, $K_{n-3} + \epsilon_3$ is a subgraph of $G$, as desired.
\end{proof}

\begin{prop} \label{jump}
Let $G$ be a graph. There exists some $n \in \mathbb{N}$ such that 
\[
\sdim (G+K_n) > \sdim G + n
\]
if and only if $R_G > \frac{\sqrt{2}}{2}$. 
\end{prop}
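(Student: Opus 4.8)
I would let $m=\sdim G$ and begin with the trivial lower bound $\sdim(G+K_n)\ge m+n$, valid for every $n$: since $G+K_n=(G+K_{n-1})+vertex$, iterating \Cref{addV} gives $\sdim(G+K_n)>\sdim(G+K_{n-1})>\cdots>\sdim G$. Hence ``$\exists n:\ \sdim(G+K_n)>m+n$'' is exactly the failure of ``$\sdim(G+K_n)= m+n$ for all $n$'', and it suffices to prove the equivalent statement: $\sdim(G+K_n)\le m+n$ for every $n$ if and only if $R_G\le\tfrac{\sqrt2}{2}$. I will use repeatedly that the circumradius $r_n$ of the (unique, by \Cref{KSphere}) embedding of $K_n$ satisfies $r_{n+1}=\frac1{2\sqrt{1-r_n^2}}$ by \Cref{Main2}, so by \Cref{Main2Cor}(a) the sequence $(r_n)$ is strictly increasing and bounded above by $\tfrac{\sqrt2}{2}$, with limit the unique fixed point of $t\mapsto\frac1{2\sqrt{1-t^2}}$, namely $\tfrac{\sqrt2}{2}$; consequently $q_n:=\sqrt{1-r_n^2}$ decreases to $\tfrac{\sqrt2}{2}$ from above. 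The case $|G|\le1$ is immediate ($\epsilon_1+K_n=K_{n+1}$ has spherical dimension $n=\sdim\epsilon_1+n$), so I assume $|G|\ge2$.

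For the direction $R_G\le\tfrac{\sqrt2}{2}\Rightarrow\sdim(G+K_n)\le m+n$, I would fix $n$ and choose an embedding of $G$ on an $\sphere[m]$ whose radius $r$ is as close to $R_G$ as desired; this $\sphere[m]$ is the unique sphere carrying that embedding, since $\sdim G=m$ forbids $G$ from lying on any $\sphere[m-1]$. Applying \Cref{Main2} $n$ times in succession (adjoining one vertex each time) yields an embedding of $G+K_n$ on a unique $(m+n)$-dimensional sphere of radius $f^{(n)}(r)$, where $f(t)=\frac1{2\sqrt{1-t^2}}$. If $R_G<\tfrac{\sqrt2}{2}$ I take $r<\tfrac{\sqrt2}{2}$, and \Cref{Main2Cor}(a),(b) keep every iterate in $[\,r,\tfrac{\sqrt2}{2}\,]\subseteq[0,1)$; if $R_G=\tfrac{\sqrt2}{2}$ and the infimum is not attained, then $r$ may be forced slightly above $\tfrac{\sqrt2}{2}$, but since $f^{(n)}$ is continuous and $f^{(n)}\!\left(\tfrac{\sqrt2}{2}\right)=\tfrac{\sqrt2}{2}<1$, taking $r$ close enough to $\tfrac{\sqrt2}{2}$ still gives $f^{(n)}(r)<1$. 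Either way $G+K_n$ embeds on an $\sphere[m+n]$, so $\sdim(G+K_n)\le m+n$.

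For the converse, assume $\sdim(G+K_n)\le m+n$ for every $n$ and fix an embedding of $G+K_n$ on an $\sphere[m+n]$ of radius $\rho_n$ centered at $o$. Inside it, $K_n$ lies on its unique $\sphere[n-1]$ of radius $r_n$ and some center $c$ (\Cref{KSphere}); by \Cref{Main1}, every vertex of $G$, being at distance $1$ from all of $K_n$, lies on the $\sphere[m+1]$ of radius $q_n=\sqrt{1-r_n^2}$ centered at $c$ and spanning the orthogonal complement of the affine hull $V_2$ of $K_n$. Let $V_1$ be the affine hull of this $\sphere[m+1]$, so $\mathbb{R}^{m+n}=V_1\oplus V_2$ with $c$ as the origin. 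Since $K_n$ affinely spans $V_2$ (otherwise it would lie on an $\sphere[n-2]$) and all vertices of $K_n$ are equidistant from $o$, the $V_2$-component of $o$ is orthogonal to every direction of $V_2$, hence is $0$; thus $o\in V_1$. Also $o\ne c$, for $o=c$ would make $\rho_n$ equal both $r_n$ and $q_n$, forcing $r_n=\tfrac{\sqrt2}{2}$, impossible. Now every vertex $g$ of $G$ has $|g-c|=q_n$ and $|g-o|=\rho_n$ with $g,o\in V_1$, so $\langle g,o-c\rangle$ is independent of $g$; as $o\ne c$ this puts $G$ in a proper affine hyperplane of $V_1$, whose intersection with the $\sphere[m+1]$ of radius $q_n$ is a sphere of dimension at most $m$ and radius at most $q_n$ — necessarily an $\sphere[m]$, since $\sdim G=m$. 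Hence $R_G\le q_n$ for all $n$, and since $q_n\to\tfrac{\sqrt2}{2}$ from above, $R_G\le\tfrac{\sqrt2}{2}$, completing the proof.

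The main obstacle is the extraction in the last paragraph: from an arbitrary round-sphere embedding of the large graph $G+K_n$, one must recover a controlled embedding of $G$ alone. The decisive inputs are the uniqueness and ``compactness'' ($r_n<\tfrac{\sqrt2}{2}$) of the embedding of $K_n$, which push the center $o$ of the ambient sphere off the common center $c$ and into $G$'s subspace $V_1$, thereby trapping $G$ in a hyperplane slice of radius at most $q_n$; the convergence $q_n\to\tfrac{\sqrt2}{2}$ then makes the bound exactly sharp. By comparison the other direction is routine: iterate \Cref{Main2} and apply \Cref{Main2Cor}, with only a brief continuity remark needed in case the infimum defining $R_G$ is not attained.
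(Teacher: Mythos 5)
Your proof is correct, and the two directions split cleanly: the direction $R_G\le\tfrac{\sqrt2}{2}\Rightarrow\sdim(G+K_n)\le\sdim G+n$ is essentially the paper's argument (iterate \Cref{Main2}, control the radii via \Cref{Main2Cor}; your continuity remark for an unattained infimum is exactly what \Cref{RProps}b,c package), but your converse takes a genuinely different route. The paper argues \emph{forward} from $G$: any spherical embedding of $G+K_n$ restricts to an embedding of $G$, which by \Cref{spheres} lies on some $\sphere[]$ of radius at least $R_G>\tfrac{\sqrt2}{2}$, and \Cref{RProps}d,e(2) show the radii produced by successively adjoining the vertices of $K_n$ exceed $1$ after an explicit number $n=\left\lceil(1-R_G)/\delta\right\rceil$ of steps; this forces the paper's three-way case split on whether $G$ lies on a $\sphere[\sdim G]$, on a higher-dimensional $\sphere[]$, or on no $\sphere[]$ at all. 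You instead work \emph{backward} from the assumed embedding of $G+K_n$ on an $(m+n)$-dimensional sphere: the rigidity of $K_n$ (\Cref{KSphere}, \Cref{KSphereImp}) plus \Cref{Main1} traps $G$ on an $\sphere[m+1]$ of radius $q_n=\sqrt{1-r_n^2}$, and the ambient circumsphere --- whose center you correctly show lies in $V_1$ but differs from $c$ because $r_n<\tfrac{\sqrt2}{2}$ --- slices $G$ down to an $\sphere[m]$ of radius at most $q_n$, giving $R_G\le q_n$ for every $n$. This avoids the paper's case analysis entirely and exhibits $\tfrac{\sqrt2}{2}$ as what it really is, the limit of the circumradii $r_n$; the one ingredient you use that the paper never states is the convergence $r_n\to\tfrac{\sqrt2}{2}$, which your fixed-point computation $(2t^2-1)^2=0$ supplies. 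The trade-off is that the paper's version yields an explicit witness $n$, whereas yours produces $n$ only as the first index with $q_n<R_G$; both are complete.
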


The proof of \Cref{jump} requires many iterative applications of \Cref{Main2}. To simplify the proof, we define $R(r) = \frac{1}{2\sqrt{1-r^2}}$ and $R^{n}$ ($n \in \mathbb{Z}$) as the composition of $R(x)$ $n$ times. $R(x)$ has the following properties:

\begin{lem} \label{RProps}
Let $0<r,r_1,r_2<1$.

\textbf{a.} For all $n \in \mathbb{Z}^+$, $r \leq R^n(r)$.

\textbf{b.} For all $n \in \mathbb{Z}^+$, if $r_1 < r_2$ and $R^{n-1}(r_2)<1$, $R^n(r_1) < R^n(r_2)$.

\textbf{c.} For any $n \in \mathbb{Z}^+$, there exists an $r$ such that $\frac{\sqrt{2}}{2} < r < 1$ and $R^n(r) < 1$.

\textbf{d.} For any $r > \frac{\sqrt{2}}{2}$, there exists an $n$ such that $R^n(r) > 1$. 

\textbf{e.} Let $G$ be a graph, and fix an embedding such that $G$ lies on a unique $\sphere[m]$, which has radius $r$. 
\begin{enumerate}
    \item If $r \leq \frac{\sqrt{2}}{2}$, then for all $i \in \mathbb{Z}^+$, $G+K_i$ lies on a unique $\sphere[m+i]$, which has radius $R^i(r)$.
    \item Let $r \geq \frac{\sqrt{2}}{2}$, and let $n$ be the smallest positive integer such that $R^n(r) > 1$. For all $i \leq n$,  $G+K_i$ lies on a unique (n+i)-dimensional sphere, which has radius $R^i(r)$. For $i>m$, $G+K_i$ cannot be embedded on a q-dimensional sphere for any $q$.
\end{enumerate}
\end{lem}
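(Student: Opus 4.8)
The plan is to split the statement into the purely real-analytic facts about the function $R(r)=\tfrac{1}{2\sqrt{1-r^{2}}}$ on $(0,1)$ (parts \textbf{a}--\textbf{d}) and the geometric translation into embeddings (part \textbf{e}), the latter obtained by iterating \Cref{Main2} and \Cref{Main2Cor}. For \textbf{a} and \textbf{b} I would first record two elementary facts about $R$ on $(0,1)$: it is strictly increasing (as $r$ grows, $1-r^{2}$ shrinks, so $\tfrac{1}{2\sqrt{1-r^{2}}}$ grows), and $R(r)\ge r$, with equality exactly at $r=\tfrac{\sqrt{2}}{2}$ — this last fact is precisely \Cref{Main2Cor}, and can also be checked by squaring $2r\sqrt{1-r^{2}}\le1$. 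Then \textbf{a} follows by induction from $R^{n}(r)=R\!\left(R^{n-1}(r)\right)\ge R^{n-1}(r)$ (valid as long as the iterates remain in $(0,1)$, which is all that is needed below), and \textbf{b} follows by induction using \textbf{a} — which gives $R^{n-2}(r_{2})\le R^{n-1}(r_{2})<1$, so the inductive hypothesis applies and yields $R^{n-1}(r_{1})<R^{n-1}(r_{2})<1$ — together with strict monotonicity of $R$.

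For \textbf{c}, note that $R\!\left(\tfrac{\sqrt{2}}{2}\right)=\tfrac{\sqrt{2}}{2}$, so $\tfrac{\sqrt{2}}{2}$ is a fixed point of $R$; since $R^{n}$ is continuous at $\tfrac{\sqrt{2}}{2}$ and takes the value $\tfrac{\sqrt{2}}{2}<1$ there, there is a neighbourhood of $\tfrac{\sqrt{2}}{2}$ on which $R^{n}<1$, and any $r$ in that neighbourhood with $r>\tfrac{\sqrt{2}}{2}$ works. For \textbf{d}, argue by contradiction: if $R^{n}(r)\le1$ for every $n$, then by \textbf{a} the sequence $\bigl(R^{n}(r)\bigr)_{n}$ is nondecreasing and bounded, hence converges to some $L$ with $r\le L\le1$; the case $L=1$ is impossible, since $R(x)\to\infty$ as $x\to1^{-}$ would force some $R^{n+1}(r)$ to exceed $1$; so $L<1$, and continuity gives $R(L)=L$, i.e. $\left(2L^{2}-1\right)^{2}=0$, so $L=\tfrac{\sqrt{2}}{2}$ — contradicting $L\ge r>\tfrac{\sqrt{2}}{2}$.

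For \textbf{e}, induct on $i$ using $G+K_{i}=(G+K_{i-1})+vertex$. In case \textbf{(1)}, \Cref{Main2Cor} keeps every radius $R^{i}(r)$ at most $\tfrac{\sqrt{2}}{2}<1$, so \Cref{Main2} applies at each step and shows that $G+K_{i}$ lies on a unique $(m+i)$-dimensional sphere of radius $R^{i}(r)$ (a sphere of radius less than $1$). In case \textbf{(2)}, where we may assume $r>\tfrac{\sqrt{2}}{2}$ since otherwise no such $n$ exists, the radii $R^{i}(r)$ increase but remain below $1$ for $i<n$, so \Cref{Main2} still applies through the $n$-th step and gives that $G+K_{n}$ lies on a unique $(m+n)$-dimensional sphere of radius $R^{n}(r)>1$. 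For $i>n$ I would show $G+K_{i}$ lies on no sphere at all: in a hypothetical embedding of $G+K_{i}$ with all vertices on a sphere, deleting the surplus vertices of the $K_{i}$ factor leaves an embedding of $G+K_{n}$, which by the uniqueness just obtained sits on the rigid $(m+n)$-dimensional sphere of radius $R^{n}(r)>1$; but each deleted vertex is adjacent to, hence distance $1$ from, every vertex of $G+K_{n}$, and a computation like those in the proofs of \Cref{preMain1} and \Cref{Main1} shows that a point equidistant from vertices spanning such a sphere is equidistant from all of it, at distance at least its radius $>1$ — a contradiction.

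The main obstacle is precisely that last step of \textbf{e(2)}: one must be sure that in an \emph{arbitrary} embedding of $G+K_{i}$ the induced embedding of $G+K_{n}$ really is forced onto its radius-$R^{n}(r)$ circumsphere and that its vertices affinely span that sphere, so that ``distance $1$ from every vertex of $G+K_{n}$'' genuinely contradicts the radius exceeding $1$. This rigidity is exactly what the uniqueness clauses of \Cref{Main2} supply, propagated from the assumed unique embedding of $G$; the care lies in checking that the iteration transmits this uniqueness cleanly, including the symmetric choices for each newly adjoined vertex and the non-generic case in which some intermediate iterate $R^{i}(r)$ equals $1$.
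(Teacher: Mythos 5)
Your proposal is correct, and parts \textbf{a}, \textbf{b}, and \textbf{e}(1) (and the first half of \textbf{e}(2)) follow the paper's route exactly: iterate \Cref{Main2} and \Cref{Main2Cor} and induct on $n$ or $i$. Where you diverge is in \textbf{c}, \textbf{d}, and the tail of \textbf{e}(2). For \textbf{c} the paper inducts, pulling back by $R^{-1}$ at each step ($r' = R^{-1}(r)$ satisfies $\tfrac{\sqrt{2}}{2} < r' < r$ and $R^{n+1}(r') = R^n(r) < 1$), whereas you invoke continuity of $R^n$ at the fixed point $\tfrac{\sqrt{2}}{2}$; both work, and yours is arguably cleaner since it avoids checking that $R^{-1}$ lands where you want. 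For \textbf{d} the paper is quantitative: it sets $\delta = R(r) - r$, uses \Cref{Main2Cor}c to get $R^i(r) - R^{i-1}(r) \geq \delta$, and telescopes to reach $1$ after $\lceil (1-r)/\delta \rceil$ steps; your soft argument (a bounded monotone sequence would converge to a fixed point, but the only fixed point is $\tfrac{\sqrt{2}}{2} < r$, and $L=1$ is ruled out since $R$ blows up there) is equally valid, though it gives no explicit $n$ — which the paper never uses anyway. Both versions share the same minor blemish at the degenerate case where some iterate equals exactly $1$ (the paper's telescoping only yields $R^n(r) \geq 1$). Finally, for $i>n$ in \textbf{e}(2) the paper simply applies the contrapositive of \Cref{spheres} (if $G+K_i$ lay on a sphere, $G+K_n$ would lie on a sphere of radius less than $1$, contradicting its unique circumsphere of radius $R^n(r)>1$), while you rerun the distance computation of \Cref{Main1} to show no point can be at distance $1$ from all of $G+K_n$; your version proves the stronger statement that $G+K_i$ has no embedding at all extending the fixed embedding of $G$, at the cost of the rigidity bookkeeping you flag, which the citation of \Cref{spheres} sidesteps.
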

\begin{proof}
\textbf{a.} We proceed inductively on $n$. The equation $r=R^0(r)$ takes care of the base case. Suppose that $r<R^n(r)$. By \Cref{Main2Cor}, 
\[
r < R^n(r) < R(R^n(r)) = R^{n+1}(r),
\]
completing the inductive step and thus the proof.
\\
\\
\textbf{b.} We proceed inductively on $n$. By choice of $r_1$ and $r_2$, $R^0(r_1) = r_1 < r_2 = R^0(r_2$). Suppose $R^n(r_1) < R^n(r_2) < 1$. Then
\[
R^{n+1}(r_2) = \frac{1}{2\sqrt{1-(R^n(r_2))^2}} > \frac{1}{2\sqrt{1-(R^n(r_1))^2}} = R^{n+1}(r_1),
\]
completing the inductive step and thus the proof.
\\
\\
\textbf{c.} We proceed by induction. The base case is trivial. Suppose there exists an $r$ such that $\frac{\sqrt{2}}{2} < r < 1$ and $R^n(r) < 1$. We must find an $r'$ such that $R^{n+1}(r') < 1$. Consider $r' = R^{-1}(r)$. $R^{n+1}(r') = R^{n}(r) < 1$, and by \Cref{Main2Cor}, $\frac{\sqrt{2}}{2}<r'<r$. This completes the inductive step and thus the proof.   
\\
\\
\textbf{d.} Put $\delta = R(r) -r$ and choose $n = \left \lceil \frac{1 - r}{\delta} \right \rceil$. By \Cref{Main2Cor}c, $R^{i}(r) - R^{i-1}(r) \geq \delta$ for all $i \geq 1$. Therefore, 
\[
R^n(r) = r + \sum_{i=1}^{n} \left ( R^{i}(r) - R^{i-1}(r) \right ) \geq n \delta + r \geq (1-r) + r = 1, 
\]
as desired.
\\
\\
\textbf{e.} Set an embedding such that $G$ lies on an n-dimensional sphere of radius $r$. 
\\
\\
(1) First let $r \leq \frac{\sqrt{2}}{2}$. We proceed inductively. By definition of $r$, $G$ lies on a unique $\sphere[m]$ of radius $R^0(r)=r$. For $0 \leq i$, suppose that $G+K_i$ lies on a unique $\sphere[m+i]$, which has radius $R^i(r)$. We must show that $G+K_{i+1}$ lies on a unique $\sphere[m+i+1]$, which has radius $R^{i+1}(r)$. Well, removing a vertex yields $G+K_i$, which by the inductive hypothesis, lies on a unique $\sphere[m+i]$ of radius $R^i(r)$. By \Cref{Main2}, re-adjoining the removed vertex yields a graph that lies on a unique $\sphere[m+i+1]$ of radius $R(R^i(r))=R^{i+1}(r)$. This completes the inductive step and thus the proof of the first statement.
\\
\\
(2) For $i \leq n$, the proof is identical to the proof of (1). Suppose $i>n$. Since $G+K_n$ does not lie on a $\sphere[]$, the converse of $\Cref{spheres}$ yields that $G+K_i$ does lie on a q-dimensional sphere for any $q$.
\end{proof}

We now prove \Cref{jump}. 

\begin{pf}{\Cref{jump}}
First suppose that $R_G \leq \frac{\sqrt{2}}{2}$, and let $n$ be given. By \Cref{RProps}c, there exists some $r_m > \frac{\sqrt{2}}{2}$ such that $R^n(r_m) < 1$. Since $R_G \leq \frac{\sqrt{2}}{2}$, we may choose an embedding of $G$ such that $G$ lies on a $\sphere[\sdim G]$ of radius $r < r_m$. Fix such an embedding. By \Cref{RProps}b, $R^n(r) < R^n(r_m) = 1$, so by \Cref{RProps}e(1), $G+K_n$ is embedded on a $\sphere[\sdim G + n]$. Thus, $\sdim (G+K_n) \leq \sdim G + n$, completing the first half of the proof.

Now suppose that $R_G > \frac{\sqrt{2}}{2}$. Choose $n$ such that $R^n(R_G) >1$ (\Cref{RProps}d guarantees such a choice of $n$). If $G+K_n$ can be embedded on a $\sphere[\sdim G +n]$, it can be constructed from an embedding of $G$, so set an embedding of $G$. If $G$ lies on a $\sphere[\sdim G]$, \Cref{RProps}e(2) shows that we cannot construct $G+K_n$ on a $\sphere[\sdim G + n]$. If $G$ lies on a $\sphere[]$ of greater dimension, iterative application of \Cref{addV} shows that we cannot construct $G+K_n$ on a $\sphere[\sdim G + n]$. Finally, if $G$ does not lie on any $\sphere[]$, \Cref{spheres} shows that we cannot extend the embedding of $G$ to an embedding of $G+K_1$ and so certainly not to an embedding of $G + K_n$. Regardless, $G+K_n$ cannot be embedded on a $\sphere[\sdim G + n]$, so $\sdim (G + K_n) > \sdim G + n$, as desired.
\end{pf}

We have now developed the necessary lemmas to compute $\mathbb{S}_n$:

\begin{thm} \label{Sn}
For $n > 3$, $\mathbb{S}_n = \{ K_{n-3} + \epsilon_3 \}$.
\end{thm}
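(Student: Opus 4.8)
The plan is to prove the two inclusions $K_{n-3}+\epsilon_3\in\mathbb{S}_n$ and $\mathbb{S}_n\subseteq\{K_{n-3}+\epsilon_3\}$, running everything as a strong induction on $n$ with base case $n=4$ supplied by \Cref{sBase}d and \Cref{4vSubgraphs}.

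\emph{Membership.} By \Cref{K+E3 sdim}, $K_{n-3}+\epsilon_3$ has $n$ vertices and spherical dimension $n-1$, so it remains to bound the spherical dimension of each proper minor by $n-2$. A minor using a vertex deletion or an edge contraction has at most $n-1$ vertices, hence embeds inside $K_{n-1}$ and has spherical dimension at most $n-2$. By monotonicity the edge-deleted minors reduce to $H=(K_{n-3}+\epsilon_3)-e$ for a single edge $e$. If $e$ lies in the $K_{n-3}$, then $H=K_{n-5}+K_{2,3}$; placing $\epsilon_2$ and $\epsilon_3$ on orthogonal circles of radius $\frac{\sqrt{2}}{2}$ realizes $K_{2,3}$ on a unique $\sphere[3]$ of radius $\frac{\sqrt{2}}{2}$, so by \Cref{RProps}e(1), $\sdim H=3+(n-5)=n-2$. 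If $e$ joins the $K_{n-3}$ to the $\epsilon_3$, then $H=K_{n-4}+(P_3\cupdot vertex)$, and since $P_3\cupdot vertex$ embeds on a $\sphere[2]$ of radius at most $\frac{\sqrt{2}}{2}$ (cf.\ \Cref{4vSubgraphs}; pick a right-angled realization so the circle is unique), \Cref{RProps}e(1) gives $\sdim H=2+(n-4)=n-2$. Hence $K_{n-3}+\epsilon_3\in\mathbb{S}_n$.

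\emph{Uniqueness, modulo a key claim.} Let $G\in\mathbb{S}_n$. If $G=K_n$ it properly contains $K_{n-3}+\epsilon_3$ (which has spherical dimension $n-1$), contradicting minor minimality, so $G\ne K_n$. Write $G=H+K_k$ where $K_k$ is the clique of center vertices of $G$ and $H$ is the center-vertex-free induced subgraph on the remaining $|H|=n-k\ge2$ vertices. Since $|H|\ge2$, an optimal embedding of $H$ lies on a unique $\sphere[\sdim H]$, and \Cref{RProps}e — part (1) if the radius of that sphere may be taken at most $\frac{\sqrt{2}}{2}$, part (2) otherwise, the ``lies on no sphere'' outcome of part (2) being excluded since $\sdim G$ is finite — yields $\sdim G=\sdim H+k$, so $\sdim H=|H|-1$. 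Granting the \emph{key claim} that a center-vertex-free graph whose spherical dimension is one less than its order has at most three vertices, we obtain $|H|\le3$; the only such graphs are $\epsilon_2$, $\epsilon_3$, and $K_2\cupdot vertex$, of which the first and third make $G$ equal to $K_{n-2}+\epsilon_2$ or $K_{n-3}+(K_2\cupdot vertex)$, each properly containing a copy of $K_{n-3}+\epsilon_3$ as a subgraph and hence not minor minimal. Therefore $H=\epsilon_3$ and $G=K_{n-3}+\epsilon_3$.

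\emph{The key claim, and the main obstacle.} I would prove by strong induction on the order $m$ that no center-vertex-free graph $H$ with $m\ge4$ vertices has $\sdim H=m-1$; the base $m=4$ is \Cref{sBase}d with \Cref{4vSubgraphs}, since on four vertices spherical dimension $3$ forces a supergraph of $K_{3,1}$, which has a center vertex. For $m\ge5$: by \Cref{comb}, if every four-vertex induced subgraph of $H$ were $\subgraph K_{3,1}$ then $K_{m-3}+\epsilon_3\subgraph H$, giving $H$ a center vertex, so $H$ has a four-vertex induced subgraph containing a triangle or two independent edges. Deleting a vertex $v$ outside it and using that deletion lowers spherical dimension by at most one (a fact provable from \Cref{Main1}), $\sdim(H-v)=m-2$, so the inductive hypothesis makes $H-v$ not center-vertex-free; peeling off its center vertices as $H-v=H'+K_t$ with $H'$ center-vertex-free forces $\sdim H'=|H'|-1$ and then $|H'|\le3$ by the inductive hypothesis. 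Re-attaching $v$, center-vertex-freeness of $H$ forces $v$ to miss every vertex of $K_t$, so $H$ decomposes as $H_0+K_t$ with $H_0$ a center-vertex-free graph on at most four vertices, whence $\sdim H_0\le2$, $R_{H_0}\le\frac{\sqrt{2}}{2}$ (\Cref{4verts}), and $\sdim H=\sdim H_0+t\le 2+t\le m-2$ by \Cref{RProps}e — the desired contradiction. The real work is verifying that every sub-case closes (according to the shape of $H_0$ and to whether the small pieces lie on circles of radius exactly $\frac{\sqrt{2}}{2}$ or less) and that the unique-sphere hypotheses of \Cref{RProps}e can be met by suitably generic embeddings; this interplay between radii and clique-sums — exactly the phenomenon isolated by \Cref{RProps}, \Cref{4verts}, and \Cref{jump} — is what makes the argument delicate.
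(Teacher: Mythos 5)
Your membership half is fine (it is essentially the content of \Cref{K+E3 sdim} together with \Cref{Sminors}), but the uniqueness half has a genuine gap, and it sits exactly where the theorem's difficulty lives. After writing $G=H+K_k$ with $H$ center-vertex-free, you assert $\sdim G=\sdim H+k$ and hence $\sdim H=|H|-1$. But \Cref{RProps}e only delivers $\sdim(H+K_k)\le\sdim H+k$ when some embedding of $H$ on a $\sphere[\sdim H]$ has radius small enough that $R^k$ of it stays below $1$; if $R_H>\frac{\sqrt{2}}{2}$, then \Cref{jump} says the opposite can happen, $\sdim(H+K_k)>\sdim H+k$, in which case $\sdim H<|H|-1$ and your key claim is never triggered. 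Your parenthetical about ``part (2) otherwise'' does not repair this: part (2) concerns a single fixed embedding, and the finiteness of $\sdim G$ says nothing about whether the jump occurs. The paper sidesteps this entirely by never decomposing $G$ itself: it examines each $4$-vertex induced subgraph $H$ of $G$, where \Cref{4verts} guarantees $R_H\le\frac{\sqrt{2}}{2}$ whenever $\sdim H\le 2$, so \Cref{jump} gives $\sdim(H+K_{n-4})\le n-2\ge\sdim G$, a contradiction forcing $\sdim H=3$ and hence $K_{3,1}\subgraph H$; then \Cref{comb} globalizes. Restricting to four vertices is precisely what makes the radius control available, and that is the idea your argument is missing.

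Two further problems appear inside your induction for the key claim. First, you assume that deleting a vertex lowers spherical dimension by at most one; this is nowhere proved in the paper and is not an evident consequence of \Cref{Main1} (re-inserting the deleted vertex on a one-higher-dimensional sphere at distance exactly $1$ from precisely its neighbours is a nontrivial constraint, and nothing guarantees a compatible embedding of $H-v$ exists). Second, after re-attaching $v$ you correctly deduce that $v$ is adjacent to no vertex of $K_t$, but the claimed decomposition $H=H_0+K_t$ is then false: a join requires every vertex of $K_t$ to be adjacent to every vertex of $H_0$, including $v$. So the induction does not close. You flag the delicacy yourself in the final sentence; the cure is to replace the center-vertex-peeling scheme with the paper's four-vertex-subgraph argument.
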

\begin{proof}
Let $S_n \in \mathbb{S}_n$. Remove all but four vertices from $S_n$ to get $H$. If for every $H$ obtained this way, $K_{3,1}$ is a subgraph of $H$, then \Cref{comb} yields that $K_{n-3} + \epsilon_3$ is a subgraph of $S_n$. Indeed, since $S_n$ is minor minimal with respect to spherical dimension $n-1$ and $\sdim (K_{n-3} + \epsilon_3) = n-1$ (\Cref{K+E3 sdim}), $S_n$ would equal $K_{n-3} + \epsilon_3$. Thus, it suffices to show that every subgraph of $S_n$ with four vertices obtained by only removing vertices contains $K_{3,1}$. 

Since $\sdim K_3 = 2$ and $\mathbb{S}_4 = \{ K_{3,1} \}$, $K_{3,1}$ is the only graph with four or fewer vertices that is minor minimal with respect to spherical dimension 3, so it suffices to show that $\sdim H = 3$. Suppose to the contrary that $\sdim H \neq 3$. Since $\sdim K_4 = 3$, $\sdim H < 3$. By \Cref{4verts}, there are no graphs with four vertices that have spherical dimension 2 but cannot be embedded on a circle of radius less than or equal to $\frac{\sqrt{2}}{2}$. Therefore, by \Cref{jump}, $\sdim (H+K_{n-4} ) = n-2$. But, by construction of $H$, $S_n$ is a subgraph of $H+K_{n-4}$. Therefore, $\sdim S_n \leq \sdim H+K_{n-4} = n-2$. This contradicts that $\sdim S_n = n-1$. Thus, $\sdim H = 3$, so $K_{3,1} \subseteq H$, and so $S_n = K_{n-3}+\epsilon_3$.
\end{proof}

Before moving on, we use our new machinery to improve upon our earlier result, \Cref{KSphere}:

\begin{prop} \label{KSphereImp}
For $n>1$, $K_n$ has a unique embedding, which is on an $\sphere[n-1]$ of radius $R^{n-2}\left (\frac{1}{2} \right )$ (which is less than $\frac{\sqrt{2}}{2}$).
\end{prop}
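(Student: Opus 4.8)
The plan is to induct on $n$, using \Cref{Main2} (and its corollary \Cref{Main2Cor}) as the engine that builds $K_{n+1}$ from $K_n$ one vertex at a time. The base case is $n=2$: $K_2$ is a single edge, and its two endpoints lie on a unique $\sphere[1]$ whose radius is half the edge length, namely $\frac{1}{2} = R^0\left(\frac{1}{2}\right)$; uniqueness is clear since two points at distance $1$ determine their midpoint. This handles $n=2$ and aligns the formula, since $R^{n-2}\left(\frac{1}{2}\right)$ at $n=2$ is just $\frac{1}{2}$.

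For the inductive step, I would assume $K_n$ has a unique embedding lying on a unique $\sphere[n-1]$ of radius $r = R^{n-2}\left(\frac{1}{2}\right)$ with $r < \frac{\sqrt{2}}{2}$. Given any embedding of $K_{n+1}$, deleting a vertex $v$ yields an embedding of $K_n$, which by the inductive hypothesis is \emph{the} unique embedding of $K_n$ on its unique $\sphere[n-1]$ of radius $r$. Since $v$ is adjacent to all vertices of this $K_n$, it sits at distance $1$ from all of them; by \Cref{Main1} the locus of such points is a $0$-dimensional sphere (two points), and by \Cref{Main2} adjoining $v$ to either of these points produces an embedding of $K_{n+1}$ lying on a unique $\sphere[n]$ of radius $R(r) = R\left(R^{n-2}\left(\frac{1}{2}\right)\right) = R^{n-1}\left(\frac{1}{2}\right)$. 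Up to the rigid reflection swapping the two candidate positions for $v$, this embedding is unique, so $K_{n+1}$ has a unique embedding. Finally, \Cref{Main2Cor}a gives that since $r < \frac{\sqrt{2}}{2}$, we have $R(r) < \frac{\sqrt{2}}{2}$, closing the induction on the radius bound as well.

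The one place that requires a little care — and the main obstacle to a fully rigorous argument — is the meaning of "unique embedding." Embeddings are only determined up to isometries of $\mathbb{R}^{n}$, and \Cref{Main2} produces an embedding from a \emph{choice} of one of the two points equidistant from $G$; the two choices differ by a reflection across the hyperplane containing $S$, which is an isometry, so they count as the same embedding. I would state this convention explicitly (indeed it is implicit already in \Cref{KSphere}) and note that the inductive hypothesis "$K_n$ has a unique embedding" must be read this way, so that after deleting $v$ from an arbitrary embedding of $K_{n+1}$ we may apply an isometry to assume the resulting $K_n$ sits in the standard position used in \Cref{Main1} and \Cref{Main2}. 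With that bookkeeping in place, everything else is a direct quotation of the already-established propositions, so this proposition is essentially \Cref{KSphere} with the radius computed explicitly via the recursion $R$.
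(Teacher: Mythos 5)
Your proof is correct and follows the same route as the paper: induction on $n$ with base case $K_2$ on a $\sphere[1]$ of radius $\tfrac12$, and the inductive step supplied by \Cref{Main2} together with the radius bound from \Cref{Main2Cor}; the paper's own proof is just a two-line version of this. The only quibble is notational: under the paper's convention the two points equidistant from $K_n$ form a \emph{1}-dimensional sphere, not a 0-dimensional one.
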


\begin{proof}
We proceed by induction. $K_2$ has a unique embedding, which is on a $\sphere[1]$ of radius $R^0\left ( \frac{1}{2} \right ) = \frac{1}{2}$. The inductive step follows directly from \Cref{Main2}.
\end{proof}


\section{Another Class of Graphs Minor Minimal with Respect to Dimension: $S_n + \epsilon_3$}

The graphs $S_n + \epsilon_3$ are our second class of minor minimal graphs with respect to dimension. Before showing this, however, we must first prove a lemma.

\begin{lem} \label{radii}
For $n<4$, $S_n$ can be embedded on an $\sphere[n-1]$ of any radius $0<r<1$. For $n \geq 4$, $S_n$ can be embedded on an $\sphere[n-1]$ with any radius $r$ such that $R^{n-4} \left (\frac{1}{2} \right ) < r < 1$, but cannot be embedded on any $\sphere[n-1]$ of radius $r < R^{n-4} \left (\frac{1}{2} \right )$.
\end{lem}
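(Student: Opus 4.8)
The plan is to induct on $n$, using the recursive structure $S_n = K_{n-3} + \epsilon_3$ for $n \geq 4$ together with the sphere-stacking machinery of \Cref{Main2} and \Cref{RProps}. First I would dispose of the base cases $n = 1, 2, 3$: here $S_n = \epsilon_n$, and $\epsilon_1$ sits on a $\sphere[0]$ of any radius, $\epsilon_2$ on a $\sphere[1]$ of any radius, and $\epsilon_3$ on a $\sphere[2]$ of any radius (three distinct points can be placed on a circle of any prescribed radius $0 < r < 1$). This also seeds the induction for the $n \geq 4$ clause, since $S_4 = K_1 + \epsilon_3$ and I will want the radius interval for $S_3 = \epsilon_3$ to be all of $(0,1)$.

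For the inductive step with $n \geq 4$, write $S_n = S_{n-1} + vertex$ when $n \geq 5$ (so $S_{n-1} = K_{n-4} + \epsilon_3$), and $S_4 = \epsilon_3 + vertex = S_3 + vertex$. The attainable radii of $S_n$ on a $\sphere[n-1]$ are exactly the images under $R(x) = \tfrac{1}{2\sqrt{1-x^2}}$ of the attainable radii of $S_{n-1}$ on a $\sphere[n-2]$: by \Cref{Main2}, if $S_{n-1}$ lies on a unique $\sphere[n-2]$ of radius $\rho$ then adjoining the center vertex forces $S_n$ onto a unique $\sphere[n-1]$ of radius $R(\rho)$; conversely any embedding of $S_n$ on a $\sphere[n-1]$ restricts (delete the center vertex) to an embedding of $S_{n-1}$ on some $\sphere[]$, which by \Cref{spheres} and the uniqueness in \Cref{KSphereImp}-style arguments must be the $\sphere[n-2]$ in question. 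So I would show: for $n = 4$, the attainable radii are $R\big((0,1)\big)$; and since $R$ is increasing with $R(x) \to \infty$ as $x \to 1^-$ and $\inf_{0<x<1} R(x) = R(0) = \tfrac12$ (attained only in the limit $x \to 0$, hence not attained but approached), the image is the half-open-at-the-wrong-end interval — more precisely, $R((0,1)) = (\tfrac12, \infty)$, so the attainable radii of $S_4$ inside $(0,1)$ are exactly $(\tfrac12, 1) = (R^0(\tfrac12), 1)$, matching the claim for $n=4$. For the step from $n-1$ to $n$ (both $\geq 4$), apply $R$ to the interval $(R^{n-5}(\tfrac12), 1)$: by \Cref{RProps}b, $R$ is strictly increasing on this range, $R(R^{n-5}(\tfrac12)) = R^{n-4}(\tfrac12)$, and $R(x) \to \infty$; intersecting with $(0,1)$ gives $(R^{n-4}(\tfrac12), 1)$, and the infimum $R^{n-4}(\tfrac12)$ is not attained because $R^{n-5}(\tfrac12)$ was not attained by $S_{n-1}$. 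That the value $R^{n-4}(\tfrac12)$ itself is genuinely excluded (not merely an infimum) is what forces me to carry "open at the left endpoint, and that endpoint not attained" through the induction rather than just tracking the infimum — this bookkeeping is the only subtle point.

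The main obstacle I anticipate is the converse direction: showing that \emph{every} embedding of $S_n$ on a $\sphere[n-1]$ arises by the $R$-map from an embedding of $S_{n-1}$ on the appropriate unique $\sphere[n-2]$, i.e. that deleting the center vertex of $S_n$ (for $n \geq 4$, $K_{n-3}$ supplies a center vertex adjacent to everything) lands $S_{n-1}$ on a $\sphere[]$ that must be $(n-2)$-dimensional and on which \Cref{Main2} applies in reverse. For $n \geq 5$ this is clean because $S_{n-1} = K_{n-4}+\epsilon_3$ contains $K_{n-4}$ with $n-4 \geq 1$, and one uses \Cref{spheres} (the $|H| > 1$ clause, since $\epsilon_3$ has three vertices, or the sphere clause) to get $S_{n-1}$ onto a $\sphere[]$; then \Cref{Main1}/\Cref{Main2} pin down dimension and radius. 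The delicate case is $n = 4$: deleting the single $K_1$-vertex of $S_4$ leaves $\epsilon_3$, which can sit on a $\sphere[2]$ of \emph{any} radius in $(0,1)$, so I must check that the three points of $\epsilon_3$ inside any given $\sphere[3]$-embedding of $S_4$ actually lie on a genuine $2$-sphere (circle) — three non-collinear points always do, and collinearity is impossible since each must be at distance $1$ from the center vertex while the center vertex is not on their line unless they are, which a short argument rules out — and that \Cref{Main2} then returns exactly $R(\rho)$ with $\rho \in (0,1)$. Once this reverse correspondence is nailed down, the radius interval is just the forward image of the previous one under $R$, and the induction closes.
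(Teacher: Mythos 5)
Your proposal is correct, and the achievability half (induct on $n$, realize $S_n$ by placing the new center vertex via \Cref{Main2} over an embedding of $S_{n-1}$, and use continuity and monotonicity of $R$ to sweep out the whole interval $(R^{n-4}(\tfrac12),1)$) is essentially the paper's argument, including the base case where the radius of the $\sphere[3]$ carrying $S_4$ runs continuously from $\tfrac12$ to $\infty$ as the circle carrying $\epsilon_3$ grows from $0$ to $1$. Where you genuinely diverge is the impossibility half. You prove the lower bound by running \Cref{Main2} in reverse: every embedding of $S_n$ on a $\sphere[n-1]$ restricts, after deleting a center vertex, to an embedding of $S_{n-1}$ on a unique $\sphere[n-2]$ of some radius $\rho<1$, so $r=R(\rho)$ and induction forces $r>R^{n-4}(\tfrac12)$. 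This works, but it is the hard way: it obliges you to verify the uniqueness of the $\sphere[n-2]$ in every embedding (including the $n=4$ case where the deleted vertex leaves only $\epsilon_3$, handled by non-collinearity of three points on a circle) and to carry the "endpoint not attained" bookkeeping through the induction. The paper instead observes that $S_n$ contains $K_{n-2}$ as a subgraph (all of $K_{n-3}$ plus one vertex of $\epsilon_3$), and \Cref{KSphereImp} says $K_{n-2}$ has a unique embedding sitting on an $\sphere[n-3]$ of radius exactly $R^{n-4}(\tfrac12)$; any $\sphere[n-1]$ containing $S_n$ contains that $\sphere[n-3]$ and hence has radius at least $R^{n-4}(\tfrac12)$. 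That one-line subgraph argument buys the entire lower bound without any converse-of-\Cref{Main2} machinery, and it sidesteps the endpoint-attainment bookkeeping entirely since the lemma only excludes radii strictly below $R^{n-4}(\tfrac12)$. Your route is a fine alternative, but if you adopt it you should write out the reverse correspondence carefully rather than leaving it as the "main obstacle."
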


\begin{proof}
For $n<4$, the proof is trivial. For $n \geq 4$, $S_n$ contains a copy of $K_{n-2}$ as a subgraph (take all $n-3$ vertices of $K_{n-3}$ and any one vertex of $\epsilon_3$), so by \Cref{KSphereImp}, $S_n$ cannot have radius less than $R^{n-4} \left ( \frac{1}{2} \right )$.

To complete the proof, we show that $S_n$ can be embedded on an $\sphere[n-1]$ with any radius between $R^{n-4} \left (\frac{1}{2} \right ) < r < 1$. We proceed inductively. Embed $S_4$ as follows. Place the three non-center vertices on a $\sphere[2]$, $S$, place the center vertex as per \Cref{Main2}, and add the appropriate edges. $S_4$ is now embedded on a sphere $S'$. If we send the radius of $S$ to 0, \Cref{Main2} shows that the radius of $S'$ goes to $\frac{1}{2}$. If we send the radius of $S$ to 1, the radius of $S'$ goes to infinity. Since the radius of $S'$ increases continuously as the radius of $S$ increases, $S'$ can be a $\sphere[3]$ of any radius $R^0 \left (\frac{1}{2} \right ) = \frac{1}{2} < r < 1$.

Now, suppose that $S_n$ can be embedded on an $\sphere[n-1]$ of any radius $R^{n-4}\left (\frac{1}{2} \right) < r < 1$ and set any such an embedding. Since $\sdim S_n = n-1$, $S_n$ lies on a unique $\sphere[n-1]$. Thus, \Cref{Main2} allows us to construct $S_{n+1}$ so that it lies on an $\sphere[n]$ of radius $R(r)$. Letting $r$ run from $R^{n-4}$ to 1 completes the proof by induction. 
\end{proof}

\begin{thm} \label{Sne3}
$S_n + \epsilon_3$ is minor minimal with respect to dimension $n+1$.
\end{thm}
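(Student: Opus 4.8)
The plan is to establish (a) $\dim(S_n+\epsilon_3)=n+1$ and (b) that every proper minor of $S_n+\epsilon_3$ embeds in $\mathbb{R}^n$. For (a) I would apply \Cref{connector} with $G=S_n$ and $H=\epsilon_3$: since $\sdim S_n=n-1$ and $\sdim\epsilon_3=2$, it suffices to exhibit one radius $r$ with $S_n$ lying on a $\sphere[n-1]$ of radius $r$ and $\epsilon_3$ on a $\sphere[2]$ of radius $\sqrt{1-r^2}$. By \Cref{radii}, $S_n$ sits on a $\sphere[n-1]$ of any radius in $(R^{n-4}(\tfrac{1}{2}),1)$ (any radius $<1$ when $n<4$), and $\epsilon_3$ sits on a circle of any radius $<1$, so such an $r$ exists and $\dim(S_n+\epsilon_3)=(n-1)+2=n+1$. (For $n=1$, where \Cref{connector} does not apply because $|S_1|=1$, the graph is $K_{1,3}$ and has dimension $2$ directly.)

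For (b) write $G=S_n+\epsilon_3=K_{n-3}+\epsilon_3+\epsilon_3$, which has $n+3$ vertices, and let $H$ be a proper minor. If $|H|\le n+1$ then $H\subseteq K_{n+1}$, so $\dim H\le\dim K_{n+1}=n$ by \Cref{KSDim}. If $|H|=n+2$, then $H$ is a subgraph of $G-v$ or of $G/e$ for some vertex $v$ or edge $e$; unwinding the block structure shows each such graph is one of $S_{n-1}+\epsilon_3$, $S_n+\epsilon_2$, or $K_{n-2}+(\epsilon_2+\epsilon_2)$. The first two have dimension $n$ by \Cref{connector} (since $\sdim S_{n-1}+\sdim\epsilon_3=\sdim S_n+\sdim\epsilon_2=n$ and \Cref{radii} supplies the spheres), and $K_{n-2}+(\epsilon_2+\epsilon_2)$ embeds in $\mathbb{R}^n$ by \Cref{preconnector}, placing $K_{n-2}$ on a $\sphere[n-2]$ of radius $\tfrac{\sqrt{2}}{2}$ (possible because $R^{n-4}(\tfrac{1}{2})<\tfrac{\sqrt{2}}{2}$ by \Cref{KSphereImp}) and the square $\epsilon_2+\epsilon_2$ on a $\sphere[2]$ of radius $\tfrac{\sqrt{2}}{2}$. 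So $\dim H\le n$ in all cases with $|H|\le n+2$.

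The remaining case $|H|=n+3$ forces $H=G-(\text{some edges})$, so it is enough to show $\dim(G-e)\le n$ for every edge $e$. If an endpoint of $e$ lies in the $K_{n-3}$-block --- equivalently, $e$ does not join the two $\epsilon_3$-blocks --- then $e$ lies inside a copy of $S_n$ in $G$, so $G-e=(S_n-e)+\epsilon_3$; since $S_n$ is minor minimal with respect to spherical dimension $n-1$ (\Cref{Sn}, \Cref{sBase}), the proper minor $S_n-e$ has $\sdim(S_n-e)\le n-2$, and \Cref{preconnector} embeds $(S_n-e)+\epsilon_3$ in $\mathbb{R}^{(n-2)+2}=\mathbb{R}^n$. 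If instead $e$ joins the two $\epsilon_3$-blocks, then $G-e=K_{n-3}+\big((\epsilon_3+\epsilon_3)-e\big)$, and the key observation is that $(\epsilon_3+\epsilon_3)-e$ is a subgraph of the octahedron $\epsilon_2+\epsilon_2+\epsilon_2$: take the three antipodal pairs to be the two endpoints of $e$ together with one non-edge from inside each $\epsilon_3$. The regular octahedron realizes $\epsilon_2+\epsilon_2+\epsilon_2$ with its six vertices on a $\sphere[3]$ of radius $\tfrac{\sqrt{2}}{2}$, so putting $K_{n-3}$ on a $\sphere[n-3]$ of radius $\tfrac{\sqrt{2}}{2}$ and applying \Cref{preconnector} once more embeds $G-e$ in $\mathbb{R}^{(n-3)+3}=\mathbb{R}^n$. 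Combined with (a), this gives the theorem.

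I expect the real obstacle to be this last subcase: when $e$ joins the two $\epsilon_3$-blocks, neither the ``subgraph of $K_{n+1}$'' bound nor the reduction to a proper minor of $S_n$ is available, so one must produce a genuinely efficient small-radius spherical model of $(\epsilon_3+\epsilon_3)-e$ --- the octahedron --- to undercut the naive dimension count. A secondary, routine annoyance is that \Cref{connector} and \Cref{preconnector} need both summands to have more than one vertex, so the small values $n\le 4$ (where $K_{n-3}$ has at most one vertex and the ``$S_n$-block'' degenerates) have to be checked directly, reducing to $W_4$, $K_{2,3}$, $K_{1,3}$, $P_3$, and their edge-deletions, all of whose dimensions are immediate.
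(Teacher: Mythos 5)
Your proposal is correct, and it reaches the conclusion by a genuinely different route in the one case that matters. The paper sorts the minors of $S_n+\epsilon_3$ by which block the operations touch, and its hard case --- deleting an edge between a vertex $v$ of $S_n$ and a vertex $u$ of the outer $\epsilon_3$ --- is handled by an explicit coordinate construction: embed $S_n-v$ on a small $(n-2)$-dimensional sphere, re-attach $v$ via \Cref{Main2}, place the remaining two $\epsilon_3$-vertices on the set of points at unit distance from all of $S_n$ (\Cref{Main1}), and place $u$ on the larger $2$-sphere of points at unit distance from $S_n-v$ only. You instead sort by the vertex count of the minor, re-bracket $K_{n-3}+\epsilon_3+\epsilon_3$ so that any deleted edge meeting $K_{n-3}$ lies inside a copy of $S_n$ (reducing to the minor minimality of $S_n$ for spherical dimension), and dispose of the genuinely new case --- an edge joining the two $\epsilon_3$-blocks --- by noting that $K_{3,3}-e$ is a subgraph of the octahedron $K_{2,2,2}$, which sits on a sphere of radius $\frac{\sqrt{2}}{2}$, so \Cref{preconnector} applies with $K_{n-3}$ on a sphere of the complementary radius; I checked the partition $\{a_1,b_1\},\{a_2,a_3\},\{b_2,b_3\}$ and it does absorb all eight edges of $K_{3,3}-e$. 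Your enumeration of the $(n+2)$-vertex minors as $S_{n-1}+\epsilon_3$, $S_n+\epsilon_2$, and $K_{n-2}+C_4$ is also correct and is a finer substitute for the paper's coarser ``subgraph of $K_n+\epsilon_2$'' bound. The trade-off: the octahedron trick is cleaner and avoids new coordinates, while the paper's \Cref{Main1}-based construction treats all cross edges uniformly and is the template reused for the flower graphs later in the paper. The only loose ends are the degenerate small values $n\le 4$, where a summand may have a single vertex so that \Cref{preconnector} and \Cref{connector} do not literally apply; you flag these correctly and they are indeed routine.
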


\begin{proof}
That $\dim (S_n + \epsilon_3) = n+1$ follows immediately from \Cref{connector} and \Cref{Sn}. To show that $S_n + \epsilon_3$ is minor minimal with respect to dimension, we must show that every minor of $S_n + \epsilon_3$, $H$, has dimension less than $n+1$. We consider three exhaustive cases:
\begin{description}[style=unboxed]
\item[$H$ is obtained by only performing minor operations on $S_{n}$] In other words, $H = J + \epsilon_3$ for some $J \prec S_n$. Since $S_n$ is minor minimal with respect to spherical dimension $n-1$, $\sdim J < n-1$. Since $\epsilon_3$ can be embedded on a $\sphere[2]$ of any radius, \Cref{connector} yields that $\dim H < n +1$.
\item[$H$ is obtained by contracting at least one edge between $S_n$ and $\epsilon_3$ or removing at least one vertex from $\epsilon_3$] Then $H$ is a subgraph of $K_n + \epsilon_2$. Since $\sdim K_n = n-1$ and $\epsilon_2$ can be embedded on a $\sphere[1]$ of any radius, $\dim H < n+1$.
\item[$H$ is obtained by removing edges from between $S_{n-3}$ and $\epsilon_3$] It suffices to consider the case when only one edges is removed. Let $v$ and $u$ be the vertices incident to the removed edge, $v$ from $S_n$ and $u$ from $\epsilon_3$. Since $S_n$ is minor minimal with respect to spherical dimension, $\sdim (S_n - v) = n-2$. Set an embedding of $S_n - v$ on an $\sphere[n-2]$ of radius less than $\frac{\sqrt{2}}{2}$. Add $v$ back to expand this embedding of $S_n -v$ into an embedding of $S_n$ on an $\sphere[n-1]$. By \Cref{Main1}, we may add $\epsilon_2$ to the above construction to get $S_n + \epsilon_2$ in $\mathbb{R}^n$. \Cref{Main1} also yields that the points a distance 1 away from $S_n - v$ form a $\sphere[2]$. Place $u$ anywhere on this $\sphere[2]$ not occupied by $v$ or $\epsilon_2$ and add it to $S_n - v$ to complete the embedding of $H$ in $\mathbb{R}^n$. Thus, $\dim H < n+1$.
\end{description}
We have now shown that $\dim H < n+1$ for every $H \prec S_n + \epsilon_3$, so we are done.
\end{proof}


\section{And Another Class of Graphs Minor Minimal with Respect to Dimension: Flowers}

In this section, we add the additional requirement that edges do not cross in graph embeddings. We will use ``embedding" synonymously with ``embedding such that edges do not cross." The following proposition helps navigate this new restriction:

\begin{prop} \label{edges} 
Let $G$ and $H$ be graphs. Set an embedding of $G \cupdot H$ in $\mathbb{R}^{n+m}$ such that $G$ and $H$ lie in planes orthogonal to one another and every vertex of $G$ is a distance 1 from every vertex of $H$. Then adding every edge between $G$ and $H$ yields an embedding of $G+H$.
\end{prop}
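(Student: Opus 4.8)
The plan is to show that the only thing that can go wrong when adjoining all edges between $G$ and $H$ — namely, an edge crossing a vertex — cannot happen under the orthogonality and unit-distance hypotheses. First I would fix coordinates so that $G$ lies in the subspace $x_{n+1}=\dots=x_{n+m}=0$ and $H$ lies in the subspace $x_1=\dots=x_n=0$; orthogonality of the two planes makes this legitimate, and the common center can be taken to be the origin. In these coordinates a vertex $g$ of $G$ has the form $(g_1,\dots,g_n,0,\dots,0)$ and a vertex $h$ of $H$ has the form $(0,\dots,0,h_{n+1},\dots,h_{n+m})$, so every new edge is the segment joining such a $g$ to such an $h$, and it has length $\sqrt{|g|^2+|h|^2}$, which the hypothesis forces to equal $1$; thus all the new edges genuinely have length $1$ and the construction is at least a valid length-$1$ drawing. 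It remains only to check that no vertex lies in the interior of an edge.

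The key step is a parametrization argument. A point on the segment from $g$ to $h$ is $p(t)=(1-t)g+th$ for $t\in[0,1]$; its first $n$ coordinates are $(1-t)(g_1,\dots,g_n)$ and its last $m$ coordinates are $t(h_{n+1},\dots,h_{n+m})$. I would then split into cases according to which vertex $w$ we test: (i) if $w$ is a vertex of $G$ other than $g$, then $p(t)=w$ forces the last $m$ coordinates to vanish, i.e. $t=0$ (using $h\neq 0$, which holds since $H$'s vertices lie on a sphere of positive radius about the origin — or more simply since $|h|=1$ cannot be $0$), and then $p(0)=g\neq w$; (ii) symmetrically, if $w$ is a vertex of $H$ other than $h$, then $p(t)=w$ forces $t=1$ and $p(1)=h\neq w$; (iii) $g$ and $h$ themselves are the endpoints, not interior points. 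Since the hypothesis already supplies a crossing-free embedding of $G\cupdot H$, the original edges of $G$ and of $H$ are fine, and the only genuinely new incidences to rule out are vertices on the new $G$–$H$ edges, which the case analysis handles. (One should also note edges crossing edges is permitted? — no: in this section edges may not cross, so strictly one must also argue two new edges $gh$ and $g'h'$ don't cross a vertex through each other's interior, but that is subsumed: a crossing point on edge $gh$ is a point $p(t)$, and we only forbid it from being a \emph{vertex}, which (i)–(iii) already covers; edge-edge crossings are not part of what "embedding" forbids here for the purpose of this lemma's conclusion, or are handled identically.)

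The main obstacle, such as it is, is bookkeeping rather than mathematics: being careful that the coordinate normalization is justified (orthogonal planes through a common point) and that "distance $1$" is used in exactly the place where it is needed — to certify the new edges have the right length — while the \emph{separation of coordinates}, not the distance, is what rules out stray incidences. I expect the whole argument to be a short computation once coordinates are chosen; no appeal to the earlier sphere lemmas is needed beyond the geometric setup, though \Cref{Main1} is the natural source of configurations (orthogonal spheres, unit distances) to which this proposition applies.
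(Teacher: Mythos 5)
There is a genuine gap, and it stems from a misreading of what ``embedding'' means at this point in the paper. \Cref{edges} opens the section in which the definition of embedding is strengthened to forbid edge crossings, and the entire content of the proposition is that the newly added $G$--$H$ edges do not cross any other edges. Your proof rules out only the possibility that a new edge passes through a vertex, and then explicitly sets aside edge--edge crossings as ``not part of what embedding forbids here'' -- but they are exactly what is forbidden here, and exactly what the paper's proof is devoted to. Your parenthetical hedge (``or are handled identically'') is not a proof: a crossing of two new edges $gh$ and $g'h'$ occurs at an interior point of both segments, and nothing in your cases (i)--(iii), which only test whether $p(t)$ equals a \emph{vertex}, addresses whether $p(t)$ can equal an interior point $q(s)$ of another new edge.

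The missing arguments are the ones the paper supplies. First, an edge internal to $G$ lies entirely in the subspace $x_{n+1}=\dots=x_{n+m}=0$, while every interior point of a new edge has a nonzero last-$m$-coordinate block (since its $H$-endpoint is nonzero there), so new edges cannot cross old ones; symmetrically for $H$. Second, for two new edges $e=(u,v)$ and $e'=(u',v')$ with, say, $u\neq u'$, one projects onto the first $n$ coordinates: by \Cref{Main1} the vertices of $G$ sit on a sphere about the origin, the projections of $e$ and $e'$ are the segments from $u$ to the origin and from $u'$ to the origin, and these meet only at the origin -- which pulls back to the $H$-endpoints of the edges, not to interior points -- so $e$ and $e'$ do not cross. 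Your coordinate setup and the length-$1$ verification are fine and match the paper's, but without the two arguments above the proposition is not proved.
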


\begin{proof}
We must show that the edges between $G$ and $H$ do not cross any other edges. To do this, construct $G+H$ as in the statement. Without loss of generality, assume that $G$ lies in the first n coordinates and $H$ lies in the last m coordinates (i.e., the vertices of $G$ lie on points of the form $(x_1,...,x_n,0,...,0)$ and the vertices of $H$ lie on points of the form $(0,...,0,x_{n+1},...,x_{n+m})$).

Since the edges within $G$ lie completely inside the surface $\{(x_1,...,x_n,0,...,0)\}$ and the edges between $G$ and $H$ lie completely outside this surface, these edges do not cross. The analogous statement also holds for the edges within $H$ and edges between $G$ and $H$. 

To complete the proof, we must show that two edges between $G$ and $H$ do not cross. Let $e=(u,v)$ and $e'=(u',v')$ be edges between $G$ and $H$, with $u, u'$ vertices of $G$ and $v,v'$ vertices of $H$. Either $u \neq u'$ or $v \neq v'$, so without loss of generality, we may assume $u \neq u'$. Indeed, \Cref{Main1} shows that $G$ lies on an n-dimensional sphere centered at the origin, so $u$ and $u'$ are linearly independent. Therefore, the projection of $e$ and $e'$ into the first $n$ dimensions do not cross, and so $e$ and $e'$ do not cross, completing the proof.
\end{proof}

Most everything up to this point still holds with the additional restriction that an embedding is legal only if edges do not cross. Indeed, the proofs are almost all identical except that we would have to cite the above theorem a few times. 

One notable exception is our work that relied on star n-gons. For example, $C_5$ cannot be embedded on a $\sphere[2]$ of radius less than $\frac{\sqrt{2}}{2}$ if we do not allow edges to cross. Therefore, using the argument at the very end of Section 3, we see that $\sdim (C_5 + K_2) = 5 > 3 = \sdim (C_5 + K_1)$.

\Cref{Final Wheel} is another example. For $n>6$, our embedding of $C_n$ on a $\sphere[2]$ was as a star n-gon, which has crossing edges. Therefore, $\sdim C_n = 3$ for $n \geq 6$ (by using the construction in \Cref{wheelFig}), and so we get the following analog to \Cref{Final Wheel}
\begin{prop}
If edges are not allowed to cross, the dimensions of wheel graphs are 
\begin{figure}[H]
\centering
\begin{tabular}{ c | c | c | c }
 \textbf{Dimension of $W_n^k$} & $3 \leq n <6$ & $n=6$ & $n > 6$ \\ \hline
$k=1$     & 3 & 2 & 3 \\
$k=2$     & 3 & 4 & 4 \\
$k\geq 3$ & 4 & 5 & 5 \\
 \end{tabular}
\end{figure}
\end{prop}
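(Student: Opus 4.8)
## Proof Proposal for the Final Proposition (Wheel Graph Dimensions Without Crossings)

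The plan is to compute $\dim W_n^k = \dim(C_n + \epsilon_k)$ in the crossing-free setting by applying \Cref{connector} exactly as in the original \Cref{Final Wheel}, but with the corrected values of $\sdim C_n$. The essential new input is the recomputation of $\sdim C_n$ when edges may not cross: for $3 \le n < 6$ the only regular $n$-gon embedding available has $m=1$, which by \Cref{radii table} forces circumradius $<1$ only when $m = \tfrac{1}{6}n$ — but for $n<6$ no integer $m$ satisfies $\tfrac16 n < m < \tfrac12 n$ with $m$ coprime to $n$ except via star polygons, which are now forbidden, so $C_n$ does not embed on any $\sphere[2]$ and $\sdim C_n = 3$; for $n = 6$ the convex hexagon still embeds on a unit circle (and on small great circles inside a $\sphere[3]$), giving $\sdim C_6 = 3$ but $\dim W_6 = 2$ exactly as before; and for $n > 6$ the only planar regular embeddings again have $m=1$ (circumradius $<1$ fails since $1 > \tfrac16 n$ is false), so $\sdim C_n = 3$, the embedding being the one from \Cref{wheelFig}.

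First I would record these values of $\sdim C_n$ in the crossing-free setting, citing \Cref{radii table} and the fact that all the non-$m=1$ embeddings of $C_n$ used in \Cref{Final Wheel Lem} were star polygons with crossing edges. Then I would note, exactly as in the proof of \Cref{Final Wheel}, that $\epsilon_2$ embeds on a $\sphere[1]$ of any radius and $\epsilon_j$ ($j > 2$) embeds on a $\sphere[2]$ of any radius; in particular the "radii sum to $1$" hypothesis of \Cref{connector} is automatically satisfiable whenever we are summing with $\epsilon_k$ (for $k\ge 2$), since one summand's radius is free. Applying \Cref{connector} gives, for $k \ge 2$,
\[
\dim(C_n + \epsilon_k) = \sdim C_n + \sdim \epsilon_k,
\]
where $\sdim \epsilon_2 = 1$ and $\sdim \epsilon_k = 2$ for $k \ge 3$. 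Combining with $\sdim C_n = 3$ for $n \neq 6$ and $\sdim C_6 = 3$ yields every entry of the table: for $n=6$, $k=2$ gives $3+1=4$ and $k\ge 3$ gives $3+2=5$; for $n \neq 6$, $k=2$ gives $3+1=4$ and $k \ge 3$ gives $3+2=5$; and the $k=1$ row ($W_n$ itself) is handled by the direct geometric argument already given in the text (the degenerate/star-polygon discussion, now noting star $n$-gons are illegal, forces $\dim W_n = 3$ for $n \neq 6$ and $\dim W_6 = 2$).

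The main obstacle is the $k=1$ row, specifically establishing $\dim W_n = 3$ for $3 \le n < 6$ and $n > 6$ rather than merely $\ge 3$: I must exhibit a crossing-free embedding of $W_n$ in $\mathbb{R}^3$. For $3 \le n < 6$ this is easy — place $C_n$ as a convex $n$-gon on a suitably small lesser circle of the unit sphere centered at the hub, as in Figure 3, and the spokes, being chords from the pole-like hub out to a circle, do not cross each other or the polygon edges. For $n \ge 6$ the construction of \Cref{wheelFig} already produces a crossing-free embedding (the bunched-wheel construction was drawn without crossings), so $\dim W_n \le 3$; this point simply needs to be stated. A secondary subtlety worth a sentence: one must confirm that in the crossing-free regime \Cref{connector} is still valid, which is exactly the content of the remark preceding this proposition (the only proofs that break are those invoking star $n$-gons, and \Cref{preconnector}/\Cref{connector} do not), together with \Cref{edges} guaranteeing the edges added between the two summands introduce no crossings.
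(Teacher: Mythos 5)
Your overall strategy --- recompute $\sdim C_n$ in the crossing-free setting and then feed the new values into \Cref{connector} exactly as in \Cref{Final Wheel}, handling the $k=1$ row by the direct unit-sphere/unit-circle argument --- is the same as the paper's (which offers essentially no proof beyond the remark that only the star-polygon embeddings break). However, your treatment of the column $3 \leq n < 6$ contains a genuine error. You claim that for these $n$ the convex $n$-gon fails to have circumradius less than $1$, conclude that $C_n$ does not embed on any circle of radius less than $1$, and hence that $\sdim C_n = 3$. This misreads \Cref{radii table}: the radius is less than $1$ precisely when $m > \frac{1}{6}n$, and for $n < 6$ the convex polygon ($m=1$) satisfies $1 > \frac{1}{6}n$. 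Concretely, the unit-sided equilateral triangle, square, and regular pentagon have circumradii $\frac{1}{\sqrt{3}}$, $\frac{1}{\sqrt{2}}$, and $\frac{1}{2\sin(\pi/5)} \approx 0.851$, all less than $1$, and a convex polygon has no crossing edges. So $\sdim C_n = 2$ for $3 \leq n < 6$ even when crossings are forbidden; banning star polygons only changes the answer for $n > 6$. With your value $\sdim C_n = 3$, \Cref{connector} would give $\dim W_n^2 = 4$ and $\dim W_n^k = 5$ ($k \geq 3$) in that column, contradicting the entries $3$ and $4$ in the very table you are proving --- your sentence ``yields every entry of the table'' is therefore internally inconsistent. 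The correct entries come from $2+1=3$ and $2+2=4$.

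The other two columns are handled correctly: for $n > 6$ the only non-crossing regular embedding on a circle is the convex polygon, whose radius exceeds $1$, so $\sdim C_n = 3$ via the construction of \Cref{wheelFig}; the $n=6$ column is unchanged from \Cref{Final Wheel}; and your points about the $k=1$ row and about \Cref{edges} and \Cref{connector} surviving the no-crossing restriction are sound. Fixing the $3 \leq n < 6$ computation is all that is needed.
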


Now, notice that $W_6^2$ is minor minimal with respect to dimension 4 and $W_6^3$ is minor minimal with respect to dimension 5. In both cases, the graph almost fits in one fewer dimension. For example, if we put the two axle vertices of $W_6^2$ arbitrarily close together in $\mathbb{R}^3$, $C_6$ would have to fit on a circle of radius just less than 1, but $C_6$ can only fit on a circle of radius 1. Similarly, if we put the three axle vertices of $W_6^3$ on a circle of arbitrarily small radius in $\mathbb{R}^4$, $C_6$ would again have to fit on a circle of radius just less than 1. In both cases, adding the extra dimension allows us to embed $C_6$ on a $\sphere[3]$ (using the construction in \Cref{wheelFig}) and thus the wheel graph.

\begin{figure}[h]
\centering
\includegraphics[width=3cm]{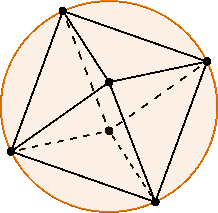} \hspace{0.5cm}
\includegraphics[width=3cm]{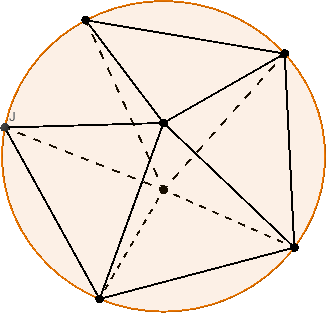}
\hspace{0.5cm}
\includegraphics[width=3.5cm]{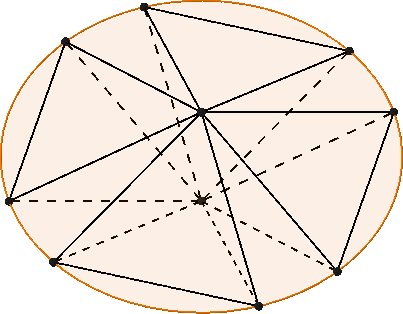}
\caption{A few graphs in $\mathcal{F}_4^2$. They can be obtained by splitting zero, one, or all the spokes of $W_4^2$, respectively.}
\label{example_flowers}
\end{figure}

Flower graphs are a generalization of wheel graphs. If we split an arbitrarily number of ``spokes" of the $W_6^2$, we get what we call the 6-Flowers with $\epsilon_2$ center, denoted $\mathcal{F}_6^{\epsilon_2}$. More precisely, we define petals by
\[
\mathcal{P}_n = \left \{ P \suchthat \begin{aligned} &\text{$P$ has $n$ edges, and each vertex of $P$} \\
&\text{ is incident to either one or two edges} \end{aligned} \right \}
\]
and flowers by
\[
\mathcal{F}_n^G = \{ G + P \suchthat P \in \mathcal{P}_n\}.
\]

The following is the main theorem of this section:
\begin{thm} \label{flowers}
The following flower graphs are minor minimal:
\begin{itemize}
	\item $\mathcal{F}_6^{S_1} - \{ W_6 \}$ with respect to dimension 3,
	\item $\mathcal{F}_6^{S_n}$ ($1<n<5$) with respect to dimension $n+2$,
	\item $\mathcal{F}_5^{S_n}$ ($n \geq 5$) with respect to dimension $n+2$.
\end{itemize}
\end{thm}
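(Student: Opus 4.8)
The plan is to prove each bullet by the usual two-step argument: first determine $\dim F$ for every flower $F$ in the class, then show that every proper minor of $F$ has strictly smaller dimension. Both steps are run through \Cref{connector} and \Cref{preconnector}, so everything reduces to understanding how a petal $P$ sits on spheres once edges are forbidden to cross. The key lemma to isolate first is a chord-counting bound: if a petal with $k$ edges is drawn on a circle with unit edges and no crossings, then the $k$ unit chords are pairwise non-crossing and all subtend the same central angle, so they occupy $k$ pairwise disjoint arcs; hence the circle has radius at least $\tfrac{1}{2\sin(\pi/k)}$, with equality forcing the chords to tile the circle (for $k=6$ this forces the petal to be $C_6$). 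Consequently $\sdim P=3$ for every $P\in\mathcal P_6$, while $\sdim P=2$ for every $P\in\mathcal P_5$ but with circumradius bounded below by $\tfrac{1}{2\sin(\pi/5)}$. I would pair this with the companion statement, proved by the $\sphere[3]$ construction of the type in \Cref{wheelFig}, that these petals and all their smaller minors embed on a $\sphere[3]$ of any radius in an explicit interval whose right end is $1$, and record the resulting infimum radii $R_P$ on the minimal sphere for each petal that occurs.

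With these facts the dimension computations are bookkeeping with \Cref{connector}, using $\sdim S_n=n-1$ and $R_{S_n}=R^{n-4}(\tfrac12)$ from \Cref{radii}. For $1<n<5$ and $P\in\mathcal P_6$: $R_{S_n}<\tfrac{\sqrt2}{2}$ and $P$ sits on a $\sphere[3]$ of small enough radius, so the two radii can be made to have squares summing to $1$, whence $\dim(S_n+P)=(n-1)+3=n+2$. For $n\ge 5$ and $P\in\mathcal P_5$: now $R_{S_n}\ge R^1(\tfrac12)=\tfrac{1}{\sqrt3}$, and a short computation gives $R_{S_n}^2+R_P^2>1$, so by \Cref{connector} $S_n$ and $P$ cannot be co-embedded on their minimal spheres with radii squaring to $1$ and $\dim(S_n+P)>(n-1)+2=n+1$; the matching upper bound $n+2$ comes from placing $P$ on a small $\sphere[3]$ and $S_n$ on a $\sphere[n-1]$. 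The first bullet is the degenerate case $S_1=K_1$, where \Cref{connector} does not apply; here I argue directly that $K_1+P$ embeds in $\mathbb R^d$ only if $P$ lies on a radius-$1$, $d$-dimensional sphere centered at the apex (and this is also sufficient, routing the spokes as in the wheel). By the equality case of the chord-counting bound a petal in $\mathcal P_6$ lies on a unit circle only when it is $C_6$, so every $P\in\mathcal P_6$ other than $C_6$ fails in $\mathbb R^2$ but succeeds in $\mathbb R^3$, giving $\dim(K_1+P)=3$ --- which is precisely why $W_6=K_1+C_6$, of dimension $2$, must be deleted from the class.

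For minor-minimality fix $F=S_n+P$ (or $K_1+P$) and a proper minor $H$, and split on where the minor operations act, mirroring the proof of \Cref{Sne3}. (i) If an operation touches $S_n$, then $H=J+P$ with $J\prec S_n$, so $\sdim J\le n-2$ by minimality of $S_n$ in spherical dimension, and the admissible radii of $J$ stay strictly below $\tfrac{\sqrt2}{2}$ (as for the subgraphs handled in \Cref{Sne3}, using \Cref{Main2Cor}); since $P$ likewise embeds on a $\sphere[3]$ of radius strictly below $\tfrac{\sqrt2}{2}$, the two co-embed and $\dim H\le (n-2)+3\le n+1$. (ii) If the operations act only inside $P$, then $H=S_n+P'$ where $P'$ is a petal with strictly fewer edges; the chord-counting bound improves for $P'$, and the crux is the strict inequality $R_{S_n}^2+R_{P'}^2<1$, which lets $S_n$ and $P'$ co-embed with radii squaring to $1$ so that $\dim H=(n-1)+\sdim P'\le n+1$. (iii) If some operation contracts a join edge or deletes a vertex of $P$, then $|H|$ drops and $H$ is a subgraph of a complete graph or of $K_n+(\text{small petal})$ of controlled size, handled as in \Cref{Sne3}; deletion of a single join edge is handled exactly as there, by removing the two incident vertices, re-embedding the remainder on a sphere of radius $<\tfrac{\sqrt2}{2}$, and reattaching the two vertices via \Cref{Main1} and \Cref{edges}.

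The step that needs genuine care is (ii): one must certify $R_{S_n}^2+R_{P'}^2<1$ strictly, uniformly over all $n$ in range and all proper minors $P'$. This is exactly where the choice of $6$ versus $5$ edges is calibrated --- the original petal sits just on the wrong side of the threshold while each of its minors sits just on the right side --- and since $R_{S_n}=R^{n-4}(\tfrac12)\to\tfrac{\sqrt2}{2}$ the margin shrinks to zero as $n\to\infty$. What keeps the inequality from degenerating is that the relevant infima ($R_{S_n}$ from \Cref{radii}, and the circumradii of the shortened path/cycle petals) are not attained, so strict inequality persists; verifying this, together with checking the finitely many small-$n$ boundary cases of the $\mathcal F_6$ families by hand, is the bulk of the work.
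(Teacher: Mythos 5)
Your proposal follows essentially the same route as the paper: your chord-counting bound is the paper's \Cref{petal} (with $r_k=\tfrac{1}{2\sin(\pi/k)}$ the circumradius of the regular convex $k$-gon), your radius bookkeeping for $S_n$ reproduces \Cref{radiiCor}, and your three-way split on where the minor operations act matches the paper's case analysis almost case for case, including the special handling of $W_6$ and the join-edge-deletion case. The only caveat is cosmetic: some proper minors of $S_n$ (e.g.\ $C_4$, $K_{3,2}$) attain radius exactly $\tfrac{\sqrt{2}}{2}$ rather than lying strictly below it, so in your case (i) you should place $J$ on radius $\le\tfrac{\sqrt{2}}{2}$ and $P$ on the complementary radius $\ge\tfrac{\sqrt{2}}{2}$ so that the squares of the radii sum to exactly $1$, as \Cref{preconnector} requires.
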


Before we prove this theorem, we need to state and prove some lemmas. 

\begin{lem} \label{radiiCor}
Let $r_n$ be the circum-radius of the regular convex n-gon.
\begin{description}
    \item[a] For all $n$, $S_n$ can be embedded on an $\sphere[n-1]$ of radius less than $\sqrt{1-r_4^2}$.
    \item[b] For $1 \leq n \leq 4$, $S_n$ can be embedded on a $\sphere[3]$ of radius less than $\sqrt{1-r_5^2}$.
    \item[c] For $n>4$, $S_n$ cannot be embedded on an $\sphere[n-1]$ of radius less than or equal to $\sqrt{1-r_5^2}$.
\end{description}
\end{lem}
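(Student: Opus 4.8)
The plan is to derive all three parts from \Cref{radii}, which records exactly which radii an $\sphere[n-1]$ carrying $S_n$ may have, together with a few elementary estimates on $r_4$, $r_5$ and on the iterates $R^{k}(\tfrac12)$. First I would collect the arithmetic. The regular convex quadrilateral with unit edges is a unit square, so $r_4^2=\tfrac12$ and $\sqrt{1-r_4^2}=\tfrac{\sqrt2}{2}$. For the pentagon $r_5=\tfrac1{2\sin(\pi/5)}$, hence $r_5^2=\tfrac{5+\sqrt5}{10}$; the only facts I draw from this are the two strict inequalities $\tfrac23<r_5^2<\tfrac34$, each equivalent to an obvious numerical claim about $\sqrt5$ (respectively $45>25$ and $5<6.25$). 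Translating, $\tfrac12<\sqrt{1-r_5^2}<\tfrac1{\sqrt3}=R(\tfrac12)$. I also note that the iterates $R^{k}(\tfrac12)$ are all defined and lie strictly below $\tfrac{\sqrt2}{2}$: since $\tfrac12<\tfrac{\sqrt2}{2}$, \Cref{Main2Cor}(a) gives $R(r)<\tfrac{\sqrt2}{2}$ whenever $r<\tfrac{\sqrt2}{2}$, so induction closes this; and \Cref{RProps}(a) gives $R^{k}(\tfrac12)\ge R(\tfrac12)$ once $k\ge1$.

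For part (a): when $n<4$, \Cref{radii} lets $S_n$ lie on an $\sphere[n-1]$ of any radius in $(0,1)$, in particular one below $\tfrac{\sqrt2}{2}=\sqrt{1-r_4^2}$. When $n\ge4$, \Cref{radii} says the admissible radii fill the interval $(R^{n-4}(\tfrac12),1)$, and since $R^{n-4}(\tfrac12)<\tfrac{\sqrt2}{2}$ there is room to pick a radius $r$ with $R^{n-4}(\tfrac12)<r<\tfrac{\sqrt2}{2}$.

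For part (b): when $1\le n\le3$, the graph $S_n=\epsilon_n$ has no edges, so $n$ distinct points on any $\sphere[3]$ of positive radius form an embedding, and I take the radius below $\sqrt{1-r_5^2}$ (positive since $r_5<1$). When $n=4$ we have $\sphere[3]=\sphere[n-1]$ and \Cref{radii} offers radii in $(\tfrac12,1)$; since $\sqrt{1-r_5^2}>\tfrac12$ (equivalently $r_5^2<\tfrac34$) I pick $r$ with $\tfrac12<r<\sqrt{1-r_5^2}$.

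For part (c): $n>4$ means $n\ge5$, so \Cref{radii} forbids embedding $S_n$ on any $\sphere[n-1]$ of radius strictly less than $R^{n-4}(\tfrac12)$. But $R^{n-4}(\tfrac12)\ge R(\tfrac12)=\tfrac1{\sqrt3}>\sqrt{1-r_5^2}$ (the strict inequality being $r_5^2>\tfrac23$), so every radius $r\le\sqrt{1-r_5^2}$ is already strictly below $R^{n-4}(\tfrac12)$, ruling out such an embedding. The one place care is needed is exactly here: \Cref{radii} excludes radii \emph{strictly} below $R^{n-4}(\tfrac12)$, so part (c) genuinely relies on the strict bound $\sqrt{1-r_5^2}<R(\tfrac12)$, i.e.\ on $r_5^2>\tfrac23$, rather than a non-strict one. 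Beyond that, everything is the routine verification of the $r_5$-inequalities and of the monotonicity and boundedness of the sequence $R^{k}(\tfrac12)$.
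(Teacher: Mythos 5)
Your proof is correct and takes essentially the same route as the paper's: all three parts are read off from \Cref{radii} together with the numerical comparisons of $\sqrt{1-r_4^2}=\tfrac{\sqrt2}{2}$ and $\sqrt{1-r_5^2}$ against $R^{n-4}\bigl(\tfrac12\bigr)$ and $\tfrac{\sqrt2}{2}$ (via \Cref{Main2Cor}a and \Cref{RProps}a). You are in fact somewhat more careful than the printed proof: you correctly identify $r_5^2=\tfrac{5+\sqrt5}{10}$ (the paper misstates this quantity as $1-r_5^2$, though its subsequent inequalities only make sense with your reading), and you make explicit the strict inequality $\sqrt{1-r_5^2}<R\bigl(\tfrac12\bigr)$ that part (c) needs to exclude radius exactly equal to $\sqrt{1-r_5^2}$.
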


\begin{proof}
To prove the three claims, we simply need to do some computations. \Cref{radii} will do most of the work for us.
\\

\noindent \textbf{a.} Direct computation yields $r_4 = \frac{\sqrt{2}}{2}$, and  \Cref{radii} and \Cref{Main2Cor}a yield that $S_n$ can be embedded on an $\sphere[n-1]$ of radius less than $\frac{\sqrt{2}}{2} = \sqrt{1-r_4^2}$.
\\

\noindent \textbf{b.} Direct computation yields $1 - r_5^2 = \frac{5+\sqrt{5}}{10}$, and \Cref{radii} yields that $S_n$ can be embedded on a $\sphere[3]$ of radius arbitrarily close to $\frac{1}{2} < \sqrt{1-r_5^2}$.
\\

\noindent \textbf{c.} By \Cref{radii}, $S_5$ cannot be embedded on a $\sphere[4]$ of radius less than $R \left ( \frac{1}{2} \right ) > \sqrt{1-r_5^2}$. Thus, \Cref{RProps}a shows that $S_n$ cannot be embedded on an $\sphere[n-1]$ of radius less than $\sqrt{1-r_5^2}$ for all $n > 4$.
\end{proof}

\begin{lem} \label{Sminors}
Every proper minor of $S_n$ can be embedded on an $\sphere[n-2]$ of radius $\frac{\sqrt{2}}{2}$.
\end{lem}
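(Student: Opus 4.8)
The plan is a case analysis on the minor operations that reduces the claim to embedding four explicit graphs. The degenerate range $n\le 3$ (where $S_n=\epsilon_n$ and every proper minor is some $\epsilon_k$ with $k<n$, which sits on a great subsphere of any sphere of radius $\frac{\sqrt{2}}{2}$) is handled directly, so assume $n\ge 4$ and write $S_n=K_{n-3}+\epsilon_3$. Edge deletion is the only operation that preserves all $n$ vertices, so a proper minor $H$ of $S_n$ is either (i) a subgraph of $S_n-e$ for a single edge $e$, or (ii) a graph on at most $n-1$ vertices obtained with at least one vertex deletion or edge contraction. In case (ii) I would analyze the blob structure of the contraction: no two vertices of $\epsilon_3$ can be merged without a clique vertex in the same blob, so collapsing any nonempty edge set of $S_n$, or deleting a vertex, always produces a subgraph of $S_{n-1}=K_{n-4}+\epsilon_3$ or of $K_{n-3}+\epsilon_2$ (absorbing $\epsilon_3$-vertices into clique blobs only shrinks the independent part, and $K_{n-3}+\epsilon_2$ already contains $K_{n-2}$). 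Up to isomorphism $S_n-e$ is either $(K_{n-3}-e)+\epsilon_3=K_{n-5}+K_{2,3}$ (for $e$ in the clique) or $(P_3\cupdot K_1)+K_{n-4}$ (for $e$ a clique--$\epsilon_3$ edge). Since a subgraph of a graph embedded on a $\sphere[n-2]$ of radius $\frac{\sqrt{2}}{2}$ inherits that embedding, it suffices to put the four graphs $S_{n-1}$, $K_{n-3}+\epsilon_2$, $K_{n-5}+K_{2,3}$, and $(P_3\cupdot K_1)+K_{n-4}$ on a $\sphere[n-2]$ of radius $\frac{\sqrt{2}}{2}$.

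The key device for the last three is \Cref{Main2} together with the observation that $\frac{\sqrt{2}}{2}$ is the fixed point of $R(r)=\frac{1}{2\sqrt{1-r^2}}$ and, by \Cref{Main2Cor}, $R$ maps $(0,\frac{\sqrt{2}}{2}]$ into itself. For $K_{n-3}+\epsilon_2$, start with $\epsilon_2$ on its (unique) $\sphere[1]$ of radius $\frac{\sqrt{2}}{2}$ and apply \Cref{Main2} $n-3$ times to adjoin the universal vertices; the radius never leaves $\frac{\sqrt{2}}{2}$. For $(P_3\cupdot K_1)+K_{n-4}$, embed $P_3\cupdot K_1$ on the circle of radius $\frac{\sqrt{2}}{2}$ by placing the center of $P_3$ at one point, its two $P_3$-neighbors at the two points of the circle a quarter turn away (so each chord has length $1$), and the isolated vertex anywhere else; the four concyclic points lie on a unique $\sphere[2]$, so \Cref{Main2} applies $n-4$ times. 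For $K_{n-5}+K_{2,3}$, I would first check $\sdim K_{2,3}=3$ (a point of a circle has at most two points of that circle at distance $1$ from it, so the part of size $2$ cannot be placed on a circle) and exhibit $K_{2,3}$ on the $\sphere[3]$ of radius $\frac{\sqrt{2}}{2}$ by putting $\epsilon_3$ on an equator and $\epsilon_2$ at the two poles; because $\sdim K_{2,3}=3$ the vertices span their ambient $\mathbb{R}^3$, so this is the unique $\sphere[3]$ through $K_{2,3}$, and $n-5$ applications of \Cref{Main2} finish the job. For $S_{n-1}$, \Cref{radii} already provides embeddings on a $\sphere[n-2]$ of every radius in $(R^{n-5}(\frac12),1)$, and $R^{n-5}(\frac12)<\frac{\sqrt{2}}{2}<1$ because $\frac12<\frac{\sqrt{2}}{2}$ and $R$ preserves the interval below $\frac{\sqrt{2}}{2}$, so $\frac{\sqrt{2}}{2}$ lies in the admissible range.

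I expect two points to carry the weight of the argument. The first is the combinatorial reduction in case (ii): one must verify that after an arbitrary sequence of edge contractions the graph is still a subgraph of $K_{n-4}+\epsilon_3$ or of $K_{n-3}+\epsilon_2$, which needs a short but genuine argument about how the three $\epsilon_3$-vertices can be absorbed. The second, and the subtler point, is pinning the radius to exactly $\frac{\sqrt{2}}{2}$ rather than merely bounding it: this is why each $S_n-e$ has to be presented as a sum of a small ``floppy'' core with a clique and built up through the fixed point of $R$, and it is also why one cannot short-cut by dominating every proper minor of $S_n$ with a single graph such as $C_4+K_{n-4}$ --- the clique-edge deletion minor $K_{n-5}+K_{2,3}$ contains a $K_{1,3}$, which does not embed in a $C_4$.
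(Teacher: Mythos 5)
Your reduction lands essentially where the paper's does: the paper simply asserts that every proper minor of $S_n$ is a subgraph of $K_{n-4}+(P_3\cupdot vertex)$ or of $K_{n-5}+K_{3,2}$, and these are exactly your two edge-deletion graphs (your $S_{n-1}$ and $K_{n-3}+\epsilon_2$ are both subgraphs of the first, so your four targets collapse to the paper's two). Where you genuinely diverge is in how the radius gets pinned to exactly $\frac{\sqrt{2}}{2}$. The paper never iterates \Cref{Main2}: it writes each target as $G+H$, embeds $G$ on an $\sphere[m]$ and $H$ on an $\sphere[m']$, each of radius exactly $\frac{\sqrt{2}}{2}$, concentric and in orthogonal planes with $m+m'=n-2$ (a clique sitting on a sphere of radius less than $\frac{\sqrt{2}}{2}$ is lifted onto a radius-$\frac{\sqrt{2}}{2}$ sphere one dimension up); then every cross edge has length $\sqrt{\tfrac12+\tfrac12}=1$ and the whole vertex set lies on the $\sphere[n-2]$ of radius $\frac{\sqrt{2}}{2}$. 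Your route instead adjoins the clique vertices one at a time through the fixed point $R\left(\frac{\sqrt{2}}{2}\right)=\frac{\sqrt{2}}{2}$. Both work; the paper's version is a one-step Pythagorean computation that sidesteps verifying the unique-sphere hypothesis of \Cref{Main2} at each stage, while yours makes explicit why $\frac{\sqrt{2}}{2}$ is the critical threshold. Two side remarks: your case (ii) combinatorial reduction is only sketched, but the paper offers no proof of its corresponding assertion either, so you are not behind it there; and your closing claim that one cannot dominate all proper minors by $C_4+K_{n-4}$ is in fact false --- $K_{2,3}\subseteq K_1+C_4$ (send one vertex of the $3$-side to the apex and realize the remaining $K_{2,2}$ as $C_4$), so $K_{n-5}+K_{2,3}\subseteq K_{n-4}+C_4$, and likewise $P_3\cupdot vertex$ is a (non-induced) subgraph of $C_4$. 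This does not affect your proof, which never uses that claim, but the shortcut you rule out is actually available.
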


\begin{proof}
The proper minors of $S_n$ are all subgraphs of $K_{n-4} + (P_3 \cupdot vertex)$ or $K_{n-5} + K_{3,2}$. Therefore, it suffices to show that both of these graphs can be embedded on an $\sphere[n-2]$ of radius $\frac{\sqrt{2}}{2}$.

In both cases, we will write the graph as the sum of two graphs, $G+H$, and show that $G$ can be embedded on an $\sphere[m]$ and $H$ on an $\sphere[m']$ such that $m + m' = n-2$. This is sufficient because we can then embed $G$ on an $\sphere[m]$ and $H$ on an $\sphere[m']$ that have the same center but lie in orthogonal planes. Adding all edges between $G$ and $H$ will then yield an embedding of $G+H$ on an $\sphere[n-2]$ of radius $\frac{\sqrt{2}}{2}$.

Since $K_{n-4}$ can be embedded on an $\sphere[n-5]$ of radius less than $\frac{\sqrt{2}}{2}$ (\Cref{KSphere}), it can be embedded on an $\sphere[n-4]$ of radius $\frac{\sqrt{2}}{2}$. It is easy too see that $P_3 \cupdot vertex$ can be embedded on a $\sphere[2]$ of radius $\frac{\sqrt{2}}{2}$. 

Since $K_{n-5}$ can be embedded on an $\sphere[n-6]$ of radius less than $\frac{\sqrt{2}}{2}$, it can be embedded on an $\sphere[n-5]$ of radius $\frac{\sqrt{2}}{2}$. Additionally, $K_{3,2}$ can be embedded on a $\sphere[3]$ of radius equal to $\frac{\sqrt{2}}{2}$.
\end{proof}

\begin{lem} \label{petal}
For $3 \leq n\leq 6$, $P \in \mathcal{P}_n$ can be embedded on circle (but obviously not two points). Moreover, $P$ can be embedded on all circles of radius greater than $r_n$ but cannot be embedded on any circle of radius less than $r_n$.
\end{lem}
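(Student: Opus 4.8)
\textbf{Proof proposal for \Cref{petal}.}

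The plan is to analyze $P \in \mathcal{P}_n$ structurally and then reduce the embedding question to facts already established about cycles and paths. A graph in $\mathcal{P}_n$ has $n$ edges and every vertex has degree $1$ or $2$, so each connected component is either a path or a cycle; since there are exactly $n$ edges total, $P$ is a disjoint union of paths and cycles whose edge counts sum to $n$. For $3 \le n \le 6$ I would simply enumerate the possible ``types'': the cases are governed by whether $P$ contains a cycle and, if so, the length of that cycle (which must be between $3$ and $n$), with the remaining edges forming paths. The claim that $P$ cannot be embedded on two points is immediate: two points accommodate at most a single edge (a $K_2$), and $n \ge 3 > 1$.

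Next I would handle the ``can be embedded on a circle'' half together with the radius statement. The governing quantity $r_n$ is the circum-radius of the regular convex $n$-gon, and by \Cref{radii table} (with $m=1$) we have $r_n < 1$ for all $n \ge 3$, and $r_n$ is strictly increasing in $n$ on this range. The key geometric observation is that if $P$ has a longest cycle/path-chain requiring $k$ consecutive edges placed on a circle of radius $\rho$, then each edge subtends an angle $2\arcsin\!\big(\tfrac{1}{2\rho}\big)$, so $k$ such edges fit on the circle precisely when $k \cdot 2\arcsin\!\big(\tfrac{1}{2\rho}\big) \le 2\pi$ for a path and $= 2\pi$-compatible (i.e. the chords close up) for a cycle — and since we forbid edge crossings, a cycle on $k$ edges must be embedded as a \emph{convex} $k$-gon, forcing radius exactly $r_k$, while a cycle is also allowed to sit on any larger circle? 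No — a $k$-cycle with unit edges on a circle of radius $\rho$ forces $\rho = r_k$ exactly. So the real constraint comes from \emph{paths}: a path on $j$ edges embeds (without crossings) on a circle of radius $\rho$ iff $j \cdot 2\arcsin\!\big(\tfrac{1}{2\rho}\big) \le 2\pi$, equivalently $\rho \ge r_j$ (with equality exactly using up the whole circle as an open convex polygonal arc). I would then argue: the binding component of $P$ is the one maximizing the relevant edge count, and one checks case-by-case that this maximum equals $n$ precisely when $P$ is connected (a single path $P_{n+1}$ or a single cycle $C_n$), and is strictly less than $n$ otherwise — but I must be careful, because a disjoint union can still force a larger radius than naive counting suggests when several large components compete for arc-length on the \emph{same} circle.

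This competition is the crux, and I expect it to be the main obstacle. When $P$ is disconnected we have freedom in how to distribute its components around the circle, but all components share one circle, so the total arc length consumed is $n \cdot 2\arcsin\!\big(\tfrac{1}{2\rho}\big)$ for paths plus the rigid $2\pi$ contributions forced by any cycles. If $P$ is a single cycle $C_n$, radius is forced to be exactly $r_n$, matching the claim. If $P$ contains a cycle $C_k$ ($k<n$) plus extra path-edges, then the cycle forces radius $r_k < r_n$, but I then need the leftover path-edges to fit in the remaining angular room on an $r_k$-circle; a short computation comparing $k \cdot 2\arcsin(\tfrac{1}{2r_k}) = 2\pi$ against the need to also fit $n-k$ more edges shows this fails, so a $P$ with a cycle is embeddable only on circles of radius at least some value, and I would verify that this value is $\ge r_n$ for the finitely many $(n,k)$ with $3 \le k < n \le 6$. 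If $P$ is cycle-free (a disjoint union of paths with $n$ total edges), then on a circle of radius $\rho$ all $n$ edges fit iff $n \cdot 2\arcsin(\tfrac{1}{2\rho}) \le 2\pi$, i.e. $\rho \ge r_n$, with the infimum $r_n$ not attained when $P$ is disconnected (the gaps between path-components are forced to be positive) but attained in the limit, which still gives ``all circles of radius greater than $r_n$'' — matching the statement exactly. I would organize this as: (1) structural decomposition of $P$; (2) the single-edge-on-a-chord angle formula and the convexity-forced rigidity of cycles under the no-crossing rule; (3) the arc-budget inequality $n \cdot 2\arcsin(\tfrac{1}{2\rho}) \le 2\pi \iff \rho \ge r_n$; (4) a finite case check ($n=3,4,5,6$, and within each the possible cycle lengths) confirming that no configuration beats $r_n$ and that radius $r_n$ is a genuine lower bound but every radius strictly above it works.
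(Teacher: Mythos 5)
The paper's own ``proof'' of \Cref{petal} is a single sentence declaring the result straightforward, so your arc-budget argument is, in substance, the argument the paper omits: a unit chord on a circle of radius $\rho$ subtends central angle $2\arcsin\bigl(\tfrac{1}{2\rho}\bigr)$, a path must wind monotonically around the circle (stepping back would place a vertex on top of the previous one), and $n\cdot 2\arcsin\bigl(\tfrac{1}{2\rho}\bigr)<2\pi$ is equivalent to $\rho>r_n$. This is exactly what makes the lemma work for the petals that actually arise in the flower construction, namely disjoint unions of paths with $n$ edges in total (the graphs obtained from $C_n$ by splitting vertices), together with $C_n$ itself; for those your step (4) goes through.

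The gap is in how you handle cycle components, and it is not one you can ``verify'' away. Your own analysis shows that a $k$-cycle with unit edges on a circle is rigid: it must be a convex regular $k$-gon, so its circle has radius exactly $r_k$. Two consequences follow that you half-notice and then suppress. First, $C_n$ itself cannot be embedded on any circle of radius greater than $r_n$, so the ``all circles of radius greater than $r_n$'' clause is false for $P=C_n$; the paper tacitly concedes this by working with $\mathcal{P}_n-\{C_n\}$ in \Cref{petal minors cor}, and you should restrict likewise rather than assert that the single-cycle case ``matches the claim.'' Second, and more seriously, the literal definition of $\mathcal{P}_n$ admits graphs such as $C_3\cupdot P_4\in\mathcal{P}_6$ (a triangle plus a three-edge path) or $C_3\cupdot C_3$: the triangle forces radius $r_3$, the angular budget is then already exhausted since $3\cdot\tfrac{2\pi}{3}=2\pi$, and the remaining component cannot be placed at all, so such a $P$ embeds on \emph{no} circle whatsoever. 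There is no threshold ``$\geq r_n$'' to verify; the first sentence of the lemma simply fails for these graphs. The honest repair is structural rather than computational: show first that the only members of $\mathcal{P}_n$ embeddable on a circle are $C_n$ and the cycle-free ones, then prove the radius claims for those two families separately. As written, your sentence ``a short computation \dots\ shows this fails, so a $P$ with a cycle is embeddable only on circles of radius at least some value'' draws the wrong conclusion from a correct computation.
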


\begin{proof}
This is straight-forward after recalling the definitions of $R_P$ and $r_n$ (which appear directly before \Cref{4verts} and in \Cref{radiiCor}, respectively).
\end{proof}

\begin{lem} \label{petal minors}
For $4 \leq n \leq 6$, suppose some minor operation is performed on $P \in \mathcal{P}_n$ to get $P'$. Then $R_{P'} \leq r_{n-1}$.
\end{lem}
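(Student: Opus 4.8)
The plan is to argue directly about the structure of $P'$ after a single minor operation on $P\in\mathcal P_n$, and then invoke \Cref{petal} to read off the relevant circum-radius. Recall that an element of $\mathcal P_n$ is a disjoint union of paths and cycles (every vertex has degree one or two) with a total of $n$ edges. The key observation is that any single minor operation — removing a vertex, removing an edge, or contracting an edge — applied to such a graph either decreases the edge count by at least one while keeping the degree condition, or breaks a cycle into a path. So after one operation on $P$, the component containing the affected edge/vertex is a path or a disjoint union of paths, and the total edge count drops to at most $n-1$. In particular $P'$ (or at least the component we care about) is a subgraph of some $Q\in\mathcal P_{n-1}$, possibly together with extra isolated vertices and smaller path/cycle components.

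First I would dispose of the easy cases: edge removal turns $P$ into a graph with $n-1$ edges still satisfying the degree bound (so $P'$ is, up to isolated vertices, an element of $\mathcal P_{n-1}$ or a subgraph thereof), and vertex removal of a degree-one or degree-two vertex deletes one or two edges, leaving at most $n-1$ edges with the degree condition preserved. Edge contraction is the case to handle with a little care: contracting an edge $uv$ in a path or cycle merges $u$ and $v$; since each of $u,v$ had degree at most two, the merged vertex has degree at most two as well (if both were degree two, contraction inside a path or cycle still yields degree two at the merged vertex, and reduces the edge count by one — a cycle $C_k$ contracts to $C_{k-1}$, a path $P_k$ to $P_{k-1}$). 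So in every case $P'$ decomposes into path/cycle components with at most $n-1$ edges total, hence every component is (a subgraph of) an element of $\mathcal P_m$ for some $m\le n-1$.

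Next I would bound $R_{P'}$. If $P'$ has at most two vertices it embeds on a circle of any radius, so assume it has at least three vertices and hence $R_{P'}$ is a genuine circum-radius of a planar embedding on a circle. A disjoint union of paths and isolated vertices with at most $n-1$ edges sits on a circle of radius as small as we like — we may place the path vertices as consecutive vertices of a regular $(n-1)$-gon (using $n-1$ of its edges, which always suffices since a path on $k\le n$ vertices needs $k-1\le n-1$ chords of unit length) and scatter the isolated vertices on the same circle — so $R_{P'}\le r_{n-1}$. The only way to get a larger forced radius is a cycle component $C_k$ with $k\le n-1$; by \Cref{petal} (applied with parameter $k$) such a component forces radius exactly $r_k$, and since $r_k$ is increasing in $k$ over the relevant range $3\le k\le n-1\le 5$, we get $R_{P'}\le r_{n-1}$ again. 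Combining the cases gives $R_{P'}\le r_{n-1}$ in all cases.

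The main obstacle I anticipate is the bookkeeping around which value of $m$ the relevant component of $P'$ lands in, and making sure the ``embed on a small circle'' claim for path-only pieces is airtight when the number of edges is strictly less than $n-1$ (so that one cannot simply cite \Cref{petal} verbatim with parameter $n-1$, since $P'$ need not have exactly $n-1$ edges). The resolution is the monotonicity of $r_m$ in $m$ together with the fact that fewer edges only gives more freedom, never less; once that is phrased carefully the rest is routine. One should also double-check the degenerate small cases ($n=4$: contracting an edge of $C_4$ gives $C_3=K_3$, whose radius $r_3<r_4$, consistent with the bound), but these are covered by the general argument.
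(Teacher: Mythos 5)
Your argument is essentially the paper's: strip off any isolated vertices (which can be placed on unoccupied points of the circle), observe that what remains is an element or subgraph of $\mathcal{P}_m$ for some $m \le n-1$, and then invoke \Cref{petal} together with the monotonicity of $r_m$ in $m$. Your extra bookkeeping --- noting that a vertex deletion or a triangle contraction can drop the edge count by two, so $P'$ may land in $\mathcal{P}_m$ with $m<n-1$ rather than exactly in $\mathcal{P}_{n-1}$ --- covers cases the paper's terse two-case proof passes over silently, and the monotonicity of $r_m$ closes them; just delete the throwaway claim that path components fit on circles of ``radius as small as we like,'' since a unit chord already forces radius at least $\tfrac12$ and all you need (and all your $(n-1)$-gon construction delivers) is radius arbitrarily close to $r_{n-1}$ from above.
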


\begin{proof}
The main idea is to use that $R_{P'} \leq r_{n-1}$ if $P' \in \mathcal{P}_{n-1}$. We break the proof into two exhaustive cases:
\begin{description}
\item[$P'$ does not have a disjoint vertex] Then $P' \in \mathcal{P}_{n-1}$, and so we are done.
\item[$P'$ has a disjoint vertex] Let $v$ be the disjoint vertex. Then $P'-v \in \mathcal{P}_{n-1}$, and so we can complete the embedding of $P'$ by placing $v$ on any unoccupied spot on the circle.
\end{description}
\end{proof}

If $P \in \mathcal{P}_n - \{C_n\}$, we can strengthen the above lemma:

\begin{lem} \label{petal minors cor}
Let $4 \leq n \leq 6$. If $P'$ is a minor of $P \in \mathcal{P}_n - \{C_n\}$, then $P'$ can be embedded on a circle, $S$, of radius arbitrarily close to $r_{n-1}$ such that a straight line can be drawn from the center of $S$ to any vertex of $P'$ without crossing an edge. 
\end{lem}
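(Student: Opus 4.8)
\textbf{Proof proposal for \Cref{petal minors cor}.}

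The plan is to combine the structural case analysis already used in \Cref{petal minors} with the observation that once $P \neq C_n$, the minor $P'$ always has at least one vertex of degree strictly less than $2$, which gives us the freedom to ``see'' the center of the circle. First I would recall that $\mathcal{P}_n$ consists of disjoint unions of paths and cycles with a total of $n$ edges; removing $C_n$ from $\mathcal{P}_n$ leaves exactly those $P$ that contain at least one path component (equivalently, at least one vertex incident to a single edge). This endpoint property is inherited by every minor in the relevant sense: any minor $P'$ of such a $P$ is again a disjoint union of paths and cycles with at most $n-1$ edges, and one checks that $P'$ cannot be $C_{n-1}$ (contracting or deleting can only shorten or break a path component, never create a new long cycle), so $P' \in \mathcal{P}_{n-1} - \{C_{n-1}\}$ possibly together with some isolated vertices, or has strictly fewer edges.

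Next I would build the embedding. By \Cref{petal} (and the argument of \Cref{petal minors}), a graph in $\mathcal{P}_{n-1}$ embeds on every circle of radius greater than $r_{n-1}$, hence on a circle $S$ of radius arbitrarily close to $r_{n-1}$; isolated vertices of $P'$ are placed on any unoccupied points of $S$, exactly as in the second case of \Cref{petal minors}. The new content is the ``line-of-sight'' condition. Here I would use that $P'$, being a proper-enough minor of a non-cycle petal, has a path component (or is entirely edgeless). Embed each path component as a circular arc that stays within a half-disk --- concretely, bend the arc slightly \emph{inward} so that the polygonal path of chords, together with the two radii to its endpoints, bounds a region not containing the center. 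Since the path is not a full cycle, its chords never wind all the way around $S$, so there is an angular gap; rotating the whole configuration so that this gap faces a chosen direction leaves a clear radial segment from the center of $S$ to any prescribed vertex. For cycle components $C_k$ with $k < n$, these are rigid regular $k$-gons of circum-radius $r_k \le r_{n-1}$, and since $r_k < 1 \le$ (radius of $S$ when we take $S$ close to $r_{n-1}$, using $r_{n-1} \le r_5 < 1$), wait --- more carefully, a $C_k$ component only appears when it leaves enough edges for the rest, so $k \le n-1$ and its $k$-gon fits inside $S$ with room to spare; position it so it subtends a small enough arc that the center is still visible past it.

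The main obstacle I anticipate is the interaction between multiple components: each path or small cycle component occupies an arc of $S$, and I must place them so that simultaneously (i) no two edges cross, (ii) every component lies in a region avoiding the center, and (iii) some radial line to the designated vertex is unobstructed. The degrees of freedom are the radius of $S$ (can be pushed toward $r_{n-1}$, giving arcs subtending angle strictly less than the angles they would in the tight regular polygon) and the rotational placement of each component. Because $P \neq C_n$ guarantees a ``deficiency'' of at least one unit of edge-length compared to the full cycle $C_{n-1}$, the total arc that must be covered is bounded away from the whole circle, leaving a genuine angular gap through which the center is visible; I would make this quantitative by noting each component of $m$ edges can be squeezed into an arc of angle slightly more than $m \cdot \tfrac{2\pi}{n-1}$, summing to slightly more than $2\pi \cdot \tfrac{(\text{total edges})}{n-1} < 2\pi$. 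Once the gap is secured, rotate so the target vertex borders it. The edge-crossing condition is then automatic from \Cref{edges}-style reasoning within a single circle together with the convexity of each arc's chord polygon.
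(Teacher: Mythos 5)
Your overall strategy --- reduce to the embedding of \Cref{petal minors} and then secure an angular gap through which the centre of $S$ is visible --- is the same one the paper uses; its proof is literally a one-line modification of \Cref{petal minors} (place each isolated vertex at a spot from which the centre can be seen), and your gap-and-rotate argument for path components is a reasonable elaboration of what the paper leaves implicit. The problems are in your structural setup and in your handling of cycle components.

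Two of your preliminary claims are false. First, $\mathcal{P}_n - \{C_n\}$ is not the set of petals containing a path component: $C_3 \cupdot C_3 \in \mathcal{P}_6 - \{C_6\}$ has every vertex of degree $2$. Second, $C_{n-1}$ \emph{can} arise as a minor of some $P \in \mathcal{P}_n - \{C_n\}$: take $P = C_{n-1} \cupdot K_2$ and delete the two vertices of the $K_2$ component. More seriously, your placement of cycle components is not an embedding \emph{on} $S$. If $P'$ has a cycle component $C_k$, then all $k$ of its vertices must lie on $S$; equal unit chords subtend equal arcs, so (crossings being forbidden in this section) $C_k$ is the convex regular $k$-gon inscribed in $S$. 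It therefore subtends the entire circle and forces the radius of $S$ to equal $r_k$ exactly --- it cannot be shrunk so as to ``subtend a small enough arc,'' and for $k < n-1$ it is incompatible with any radius near $r_{n-1}$. Consequently your quantitative gap estimate (total covered arc strictly less than $2\pi$) fails whenever $P'$ contains a cycle, and the proof as written only goes through when every component of $P'$ is a path or an isolated vertex. The cycle cases need a separate treatment (when $P' = C_{n-1}$ the conclusion still holds because the centre lies inside the convex polygon, but a $C_k$ component with $k<n-1$ appears to contradict the statement of the lemma itself --- a difficulty the paper's own terse proof does not address either).
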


\begin{proof}
The proof is almost identical to that of \Cref{petal minors}. We need only slightly revise the second case:
\begin{description}
\item[$P'$ has a disjoint vertex] Proceed as before, but place $v$ on any unoccupied spot on $S$ such that a straight line can be drawn from the center of $S$ to $v$ without crossing an edge of $P'$.
\end{description}
\end{proof}

We now prove \Cref{flowers}:

\begin{pf}{\Cref{flowers}} \\
\textbf{a.} Recall that $\dim W_6 = 2$, so $W_6$ is not minor minimal with respect to dimension 3. Let $F_6 \in \mathcal{F}_6^{S_1}-\{W_6\}$, and let $P$ be $F_6$'s petals. $F_6$ can be embedded in $\mathbb{R}^m$ if and only if $P$ can be embedded on an unit m-dimensional sphere. Since $P$ cannot be embedded on a unit circle, $\dim F_6 \geq 3$. However, $P$ can be embedded on a unit sphere (as demonstrated in \Cref{wheelFig}), so $\dim F_6 = 3$.

We must now show that $\dim H < 3$ for every $H \prec F_6$. We consider a few exhaustive cases:
\begin{description}[style=unboxed]
    \item[$S_1$ is deleted] Then $H$ is a subgraph of $P$. Since $\dim P_6 = 1$, $\dim H \leq 1 < 3$.
    \item[A minor operation is performed on $P$, or an edge between $S_1$ and $P_6$ is contracted] In either case, the resulting graph is a subgraph of $S_1 + P'$, where $P' \prec P$. The result now follows from \Cref{petal minors cor}.
    \item[An edge between $P$ and $S_1$ is deleted] \Cref{minors of F_6} suggests the proof. 
    \begin{figure}[h]
    \centering
    \includegraphics[width=3cm]{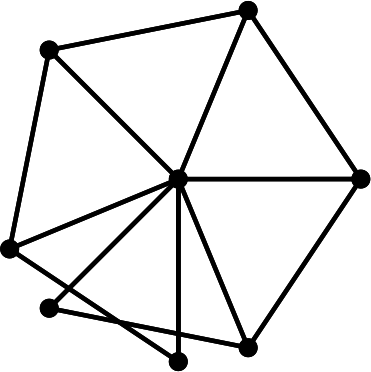} 
    \hspace{0.5cm}
    \includegraphics[width=3cm]{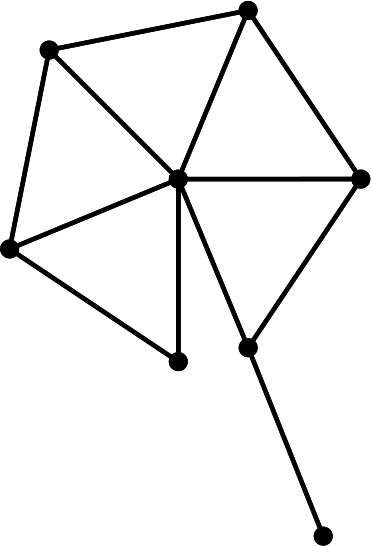}
    \hspace{0.5cm}
    \includegraphics[width=3cm]{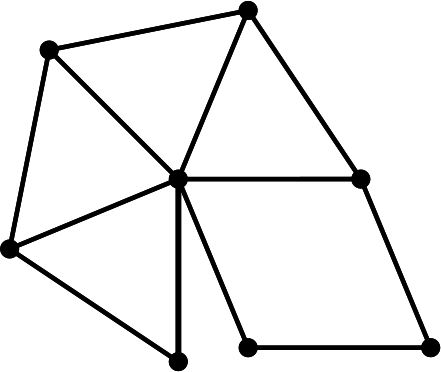} 
    \caption{$F_6$ cannot be embedded in $\mathbb{R}^2$, as can be seen in the left-most graph. However, if we remove an edge between a petal vertex and the central vertex, we get a graph akin to one of the two right-most graphs, which can be embedded $\mathbb{R}^2$.}
    \label{minors of F_6}
    \end{figure}
\end{description}
Therefore, regardless, $\dim H < 3$, as desired.
\\
\\
\noindent \textbf{b.} Let $F_6 \in \mathcal{F}_6^{S_n}$ for some $1 < n < 5$, and let $P$ be its petals. Notice that $P$ has spherical dimension 3 and  can be embedded on a $\sphere[3]$ of any radius greater than $\frac{1}{2}$. This, in tandem with \Cref{radiiCor}a and \Cref{connector}, shows that $\dim F_6 = n+2$. 

Now, let $H \prec F_6$. We must show that $\dim H < n+2$. We consider a few exhaustive cases:
\begin{description}[style=unboxed]
    \item[Minor operations are performed on $S_n$, but no edges are contracted between $S_n$ and $P$] From \Cref{Sminors}, every minor of $S_n$ can be embedded on an $\sphere[n-2]$ of radius $\frac{\sqrt{2}}{2}$. Since $P$ can be embedded on a $\sphere[3]$ of radius $\frac{\sqrt{2}}{2}$, \Cref{connector} yields that $\dim H = n+1 < n+2$.
    \item[A minor operation is performed on $P$, or an edge between $S_n$ and $P$ is contracted] In either case, the resulting graph is a subgraph of $K_n + P'$, where $P' \prec P$. Using \Cref{KSphereImp}, we see that $K_n$ can be embedded on an $\sphere[n-1]$ of radius less than $\sqrt{1-r_5^2}$, and by \Cref{petal minors}, $P'$ can be embedded on a $\sphere[2]$ of any radius greater than $r_5$. Thus, \Cref{connector} yields that $\dim H = n+1 < n+2$.
    \item[An edge between $S_n$ and $P$ is deleted] \Cref{radiiCor}b together with \Cref{minors of F_6} suggests the proof.
\end{description}

\noindent \textbf{c.} Let $F_5 \in \mathcal{F}_5^{S_n}$ for some $n \geq 5$, and let $P$ be its petals. Notice that $P$ can be embedded on a $\sphere[3]$ of any radius greater than $\frac{1}{2}$. This in tandem with \Cref{radiiCor}a shows that $F_5$ can be embedded in $\mathbb{R}^{n+2}$. Moreover, \Cref{petal}, \Cref{radiiCor}c, and \Cref{connector} shows that $\dim F_5 > n+1$. Therefore, $\dim F_5 = n+1$>

Now, let $H \prec F_5$. We must show that $\dim H < n+2$. We consider a few exhaustive cases.
\begin{description}[style=unboxed]
    \item[Minor operations are performed on $S_n$, but no edges are contracted between $S_n$ and $P$] From \Cref{Sminors}, every minor of $S_n$ can be embedded on an $\sphere[n-2]$ of radius $\frac{\sqrt{2}}{2}$. Since $P$ can be embedded on a $\sphere[3]$ of radius $\frac{\sqrt{2}}{2}$, \Cref{connector} yields that $\dim H = n+1 < n+2$.
    \item[A minor operation is performed on $P$, or an edge between $S_n$ and $P$ is contracted] In either case, the resulting graph is a subgraph of $K_n + P'$, where $P' \prec P$. By \Cref{KSphere}, $K_n$ can be embedded on an $\sphere[n-1]$ of radius less than $\frac{\sqrt{2}}{2}$, and by \Cref{petal minors}, $P'$ can be embedded on a $\sphere[2]$ of any radius greater than $\frac{\sqrt{2}}{2}$. Thus, \Cref{connector} yields that $\dim H = n+1 < n+2$.
    \item[An edge between $S_n$ and $P$ is deleted] \Cref{radiiCor}a together with \Cref{minors of F_6} suggests the proof.
\end{description}
\end{pf}

Notice that \Cref{flowers} does not hold with our previous notion of dimension. This is because we could then stack petals on top of one another as in the left-most graph in \Cref{minors of F_6}. Therefore, most of the flower graphs we have considered would have dimension $n+1$ rather than $n+2$. However, this stacking method does not work if the petals are a cycle graph. Therefore, $S_n + C_6$ still has dimension $n+2$ for $1<n<5$ and so is still minor minimal. Similarly $S_n + C_5$ is still minor minimal for $n\geq 5$. This is our final result:

\begin{cor} \label{final theorem}
Using the definition of dimension used in the previous sections, the following flower graphs are minor minimal with respect to dimension $n+2$:
\begin{itemize}
    \item $S_n + C_6$ for $1<n<5$,
    \item $S_n + C_5$ for $n\geq 5$.
\end{itemize}
\end{cor}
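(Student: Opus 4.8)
The plan is to bootstrap from \Cref{flowers}. Since $C_6 \in \mathcal{P}_6$ and $C_5 \in \mathcal{P}_5$, these graphs are themselves flowers: $S_n + C_6 \in \mathcal{F}_6^{S_n}$ for $1 < n < 5$, and $S_n + C_5 \in \mathcal{F}_5^{S_n}$ for $n \ge 5$. The only gap between \Cref{final theorem} and \Cref{flowers} is that the latter is proved under the crossing-free convention while the former uses the original notion of dimension. So the first step is the trivial monotonicity observation: any crossing-free embedding is in particular a crossing-allowed one, so the original dimension of a graph is always at most its crossing-free dimension. Applied to every proper minor $H$ of $S_n + C_6$ (resp.\ $S_n + C_5$), together with the minor-minimality clause of \Cref{flowers}, this already gives that $H$ has original dimension $< n+2$ --- the minor analysis of the previous section transfers with nothing to prove. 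What is left is only to show the two graphs themselves have original dimension exactly $n+2$.

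For that I would use \Cref{connector}, which is stated for the original notion and whose supporting chain (\Cref{preconnector}, \Cref{Main1}, \Cref{spheres}) never invokes crossings. The upper bound $\dim \le n+2$ follows either from the same monotonicity or by rerunning the explicit orthogonal-product embedding from the proof of \Cref{flowers}. For the lower bound, \Cref{connector} leaves exactly two options: $\dim(S_n + C_m) = \sdim S_n + \sdim C_m$, or it is strictly larger. For $C_6$ the lower bound is essentially automatic: $\sdim S_n = n-1$ (\Cref{sBase}, \Cref{Sn}, \Cref{K+E3 sdim}) and $\sdim C_6 = 3$ (\Cref{Final Wheel Lem}), so $\sdim S_n + \sdim C_6 = n+2$, and the upper bound then forces equality. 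Equivalently, an embedding of $S_n + C_6$ in $\mathbb{R}^{n+1}$ would, by \Cref{preconnector}, split as $S_n$ on a $\sphere[d_1]$ and $C_6$ on a $\sphere[d_2]$ with $d_1 + d_2 = n+1$, $d_1 \ge n-1$, $d_2 \ge 3$, which is impossible. For $C_5$ this shortcut is unavailable, since $\sdim C_5 = 2$ and so $\sdim S_n + \sdim C_5 = n+1$; one must positively rule out an $\mathbb{R}^{n+1}$-embedding. By \Cref{preconnector} such an embedding forces $S_n$ onto a $\sphere[n-1]$ of radius $r$ and $C_5$ onto a $\sphere[2]$ of radius $q$ with $r^2 + q^2 = 1$, and the plan is to collide the requirement $q < r_5$ (equivalently $r > \sqrt{1-r_5^2}$, forced by \Cref{radiiCor}c) against the possible circle-radii of $C_5$.

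The hard part is precisely this last step for $C_5$. In the crossing-free world it is clean: \Cref{petal} and the Section 3 classification of regular polygons pin the unique circle-embedding of $C_5$ to the convex pentagon, of circumradius $r_5 > \frac{\sqrt{2}}{2}$, and the clash with \Cref{radiiCor}c is immediate. Once crossings are allowed, however, $C_5$ also embeds on a circle as the pentagram, whose circumradius equals $\sqrt{1-r_5^2}$ and hence lies below $\frac{\sqrt{2}}{2}$ --- and $S_n$ does admit a $\sphere[n-1]$-embedding of the complementary radius $r_5$, since $\frac{\sqrt{2}}{2} > R^{n-4}(\frac{1}{2})$ for all $n \ge 4$ by \Cref{RProps}a and \Cref{radii}. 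So the clean obstruction disappears, and the genuine content of the $C_5$ case is to decide whether this pentagram-based embedding of $S_n + C_5$ in $\mathbb{R}^{n+1}$ can nonetheless be excluded; if it cannot, the argument above delivers the statement only for cycle petals $C_m$ with $m \ge 6$, where every circle-embedding is the convex $m$-gon of circumradius $\ge 1$. The $C_6$ half, by contrast, requires nothing beyond the spherical-dimension count above.
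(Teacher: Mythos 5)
Your $C_6$ half is correct and is, in effect, the rigorous version of the paper's one-sentence justification: since the only non-degenerate regular hexagon has circumradius exactly $1$, $\sdim C_6 = 3$ even with crossings allowed (\Cref{Final Wheel Lem}), so \Cref{connector} gives $\dim(S_n+C_6) \geq \sdim S_n + \sdim C_6 = n+2$ directly, and minor minimality transfers from \Cref{flowers} by the monotonicity observation you make (the paper leaves both points implicit, saying only that ``stacking does not work if the petals are a cycle graph'').

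The $C_5$ half is where you and the paper part ways, and your suspicion is well founded. The paper's entire argument is that same one sentence about stacking; it never confronts the star-polygon embedding of $C_5$, which is a crossing of a single cycle's own edges rather than a stacking of separate path petals. Your computation closes the trap rather than leaving it open: the pentagram places $C_5$ on a circle of radius exactly $\sqrt{1-r_5^2}$ (the identity $\frac{1}{4\sin^2 36^{\circ}}+\frac{1}{4\sin^2 72^{\circ}}=1$ makes the complement exact), and since $R^{n-4}\left(\tfrac{1}{2}\right) < \tfrac{\sqrt{2}}{2} < r_5 < 1$, \Cref{radii} supplies an embedding of $S_n$ on an $\sphere[n-1]$ of radius exactly $r_5$. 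The constructive direction of \Cref{preconnector} then assembles these into an embedding of $S_n+C_5$ in $\mathbb{R}^{n+1}$ under the crossing-allowed definition, so $\dim(S_n+C_5)\leq n+1$. In other words, you have not merely identified a step you cannot complete: the construction you describe shows the second bullet of the corollary fails as stated, since the obstruction the paper relies on (\Cref{radiiCor}c against \Cref{petal}) is genuinely crossing-free and evaporates once the pentagram is admitted. One small correction to a side remark: the fallback to ``cycle petals $C_m$ with $m\geq 6$'' does not extend past $m=6$, because for $m\geq 7$ there exist non-degenerate star $m$-gons of circumradius less than $1$ and $\sdim C_m$ drops back to $2$; $C_6$ is the unique cycle for which the spherical-dimension count survives crossings.
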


\section{Acknowledgments}

The first author would like to thank co-advisor Dr. Harold Ellingsen for his patient and prudent mentorship. He would also like to thank all of his colleagues from the Potsdam REU, especially Ioherase Ransom and Jessica Mean. Their engagement in speculative conversations helped make this paper possible.

This research was conducted through the SUNY Potsdam/Clarkson University REU, with funding from the National Science Foundation under Grant No. DMA-1262737 and the National Security Administration under Grant No. H98230-14-1-0141.

\end{document}